	\definecolor{shadethmcolor}{HTML}{EDF8FF}
	\definecolor{shaderulecolor}{HTML}{45CFFF}
	\definecolor{shaderulecolor}{gray}{0}
	\colorlet{shadecolor}{orange!15}
	\definecolor{shadethmcolor}{HTML}{EDF8FF}
	\definecolor{shaderulecolor}{HTML}{45CFFF}
	\definecolor{shaderulecolor}{gray}{0}
	\colorlet{shadecolor}{orange!15}
	\definecolor{shadethmcolor}{HTML}{EDF8FF}
	\definecolor{shaderulecolor}{HTML}{45CFFF}
\definecolor{shaderulecolor}{gray}{0}
 	\colorlet{shadecolor}{orange!15}
	\definecolor{shadethmcolor}{HTML}{EDF8FF}
	\definecolor{shaderulecolor}{HTML}{45CFFF}
\definecolor{shaderulecolor}{gray}{0}
 	\colorlet{shadecolor}{orange!15}
\newcommand{\Real}{\ensuremath{\mathbb{R}}}
\newcommand{\minimize}[1]{\displaystyle\minim_{#1}}
\newcommand{\minim}{\mathop{\hbox{\rm minimize}}}
\DeclareMathOperator*{\st}{subject\;to}
\def\subject{\hbox{\rm subject to}}
\def\spose#1{\hbox to 0pt{#1\hss}}
\def\text #1{\hbox{\quad#1\quad}}
\def\nthinsp{\mskip -2   mu}
\def\superstar{^{\raise 0.5pt\hbox{$\nthinsp *$}}}
\def\SUPERSTAR{^{\raise 0.5pt\hbox{$*$}}}
\def\lamstarT {\lambda^{\raise 0.5pt\hbox{$\nthinsp *$}T}}
\def\Oscr{{\cal O}}
\def\hbar{\skew{4.2}\bar h}
		\def\bkE{\mathbb{E}}
		\def\bk1{{\rm 1\kern-.17em l}}
		\def\bkD{{\rm I\kern-.17em D}}
		\def\bkR{{\rm I\kern-.17em R}}
		\def\bkP{{\rm I\kern-.17em P}}
		\def\bkY{{\bf \kern-.17em Y}}
		\def\bkZ{{\bf \kern-.17em Z}}
		\def\beq{\begin{eqnarray}}
		\def\bc{\begin{center}}
		\def\be{\begin{enumerate}}
		\def\bi{\begin{itemize}}
		\def\bs{\begin{small}}
		\def\bS{\begin{slide}}
		\def\ec{\end{center}}
		\def\ee{\end{enumerate}}
		\def\ei{\end{itemize}}
		\def\es{\end{small}}
		\def\eS{\end{slide}}
		\def\eeq{\end{eqnarray}}
		\def\qed{\quad \vrule height7.5pt width4.17pt depth0pt}
		\def\problemjoint#1#2#3#4#5{\fbox
		 {\begin{tabular*}{0.4\textwidth}
			{@{}l@{\extracolsep{\fill}}l@{\extracolsep{6pt}}l@{\extracolsep{\fill}}c@{}}
				#1 & $\minimize{#2}$ & $#3$ & $ $ \\[5pt]
					 & $\minimize{#4}\ $    & $#5$ & $ $
			\end{tabular*}}
			}
		\def\problemsmall#1#2#3#4{\fbox
		 {\begin{tabular*}{0.4\textwidth}
			{@{}l@{\extracolsep{\fill}}l@{\extracolsep{6pt}}l@{\extracolsep{\fill}}c@{}}
				#1 & $\minimize{#2}$ & $#3$ & $ $ \\[5pt]
					 & $\subject\ $    & $#4$ & $ $
			\end{tabular*}}
			}
	\def\cp2problem#1#2#3#4{\fbox
		 {\begin{tabular*}{0.9\textwidth}
			{@{}l@{\extracolsep{\fill}}l@{\extracolsep{6pt}}l@{\extracolsep{\fill}}c@{}}
				#1 & & $#4 $ 
			\end{tabular*}}}
\newcommand{\pmat}[1]{\begin{pmatrix} #1 \end{pmatrix}}
		\renewcommand{\emph}[1]{\textbf{#1}}
		\def\bkE{{\rm I\kern-.17em E}}
		\def\bk1{{\rm 1\kern-.17em l}}
		\def\bkD{{\rm I\kern-.17em D}}
		\def\bkR{{\rm I\kern-.17em R}}
		\def\bkP{{\rm I\kern-.17em P}}
		\def\bkZ{{\bf{Z}}}
\newcommand {\beeq}[1]{\begin{equation}\label{#1}}
\newcommand {\eeeq}{\end{equation}}
\newcommand {\bea}{\begin{eqnarray}}
\newcommand {\eea}{\end{eqnarray}}
\def\texitem#1{\par\smallskip\noindent\hangindent 25pt
               \hbox to 25pt {\hss #1 ~}\ignorespaces}
\def\st{\mbox{subject to}}
\def\argmin{\mathop{\rm argmin}}
\newtheorem{assumption}{Assumption}
\newtheorem{defi}{Definition}
\newtheorem{lem}[theorem]{Lemma}
\title{On the solution of stochastic optimization and variational
	problems in imperfect information regimes}
\author{Hao Jiang
\and
Uday V. Shanbhag\thanks{Jiang (\texttt{jiang23@illinois.edu}) and
	Shanbhag (\texttt{udaybag@psu.edu}) are at the Industrial and
	Enterprise Systems Engineering at University of Illinois at
		Urbana-Champaign and the Industrial and Manufacturing
		Engineering at the Pennsylvania State University at University
		Park, respectively. A conference version of this paper was
		presented at the Winter Simulation
		Conference~\cite{jiang13solution}. This research has been partially funded by NSF Award CMMI-1246887 (CAREER) and NSF Award CMMI-1400217.}
}
\begin{document}
\maketitle
\begin{abstract}
We consider the solution of a stochastic convex optimization problem
$\mathbb{E}[f(x;\theta^*,\xi)]$ over a closed and convex set $X$ in a
regime where $\theta^*$ is unavailable and $\xi$ is a suitably defined
random variable. Instead, $\theta^*$ may be obtained through the
solution of a learning problem that requires minimizing a metric
$\mathbb{E}[g(\theta;\eta)]$ in $\theta$ over a closed and convex set
$\Theta$. Traditional approaches have been either sequential or
direct variational approaches. In the case of the former, this entails the
following steps: (i) a solution to the learning problem, namely $\theta^*$, is obtained; and
(ii) a solution is obtained to the associated computational problem
which is parametrized by $\theta^*$.  Such avenues prove difficult to
adopt particularly since the learning process has to be terminated
finitely and consequently, in large-scale instances, sequential
approaches may often be corrupted by error. On the other hand, a variational approach
requires that the problem
may be recast as a possibly non-monotone stochastic variational
inequality problem in the $(x,\theta)$ space; but there are no known
first-order stochastic approximation schemes are currently available for
the solution of this problem.  To resolve the absence of convergent
efficient schemes, we present a coupled stochastic approximation
scheme which simultaneously solves {\em both} the computational and the
learning problems. The obtained schemes are shown
to be equipped with almost sure convergence properties in regimes when
the function $f$ is either strongly convex as well as merely convex.
Importantly, the scheme displays the optimal rate for strongly convex
problems while in merely convex regimes, through an averaging approach,
we quantify the degradation associated with learning by noting that the
error in function value after $K$ steps is $\Oscr\left(\sqrt{{\ln(K)}\slash{K}}\right)$,
	  rather than $\Oscr\left(\sqrt{{1}\slash{K}}\right)$ when $\theta^*$
	  is available. Notably, when the averaging window is modified suitably, it can be see that the originakl rate of $\Oscr\left(\sqrt{{1}\slash{K}}\right)$ is recovered. Additionally, we consider an online counterpart of
	  the misspecified optimization problem and provide a non-asymptotic
	  bound on the average regret with respect to an offline
	  counterpart.  In the second part of the paper, we extend these
	  statements to a class of stochastic variational inequality
	  problems, an object that unifies stochastic convex optimization
	  problems and a range of stochastic equilibrium problems.
	  Analogous almost-sure convergence statements are provided in
	  strongly monotone and merely monotone regimes, the latter
	  facilitated by using an iterative Tikhonov regularization. In the
	  merely monotone regime, under a weak-sharpness requirement, we
	  quantify the degradation associated with learning and show that
	  expected error associated with $\mbox{dist}(x_k,X^*)$ is
	  $\Oscr\left(\sqrt{{\ln(K)}\slash{K}}\right)$.  Preliminary numerics
	  demonstrate the performance of the prescribed schemes.
	  \end{abstract}

\begin{keywords}
stochastic optimization, stochastic variational inequality, stochastic
approximation, learning
\end{keywords}

\begin{AMS}

\end{AMS}

\pagestyle{myheadings}
\thispagestyle{plain}
\markboth{HAO JIANG AND UDAY V. SHANBHAG}{STOCHASTIC
	OPTIMIZATION AND VARIATIONAL PROBLEMS IN IMPERFECT INFORMATION REGIMES}

\section{Introduction} \label{sec:intro}
In the last two decades, robust
optimization~\cite{bental09,bertsimas11theory} approaches have grown in
relevance when decision-makers are faced with optimization problems with
uncertain parameters. Succinctly, in such an approach, given an
uncertainty set that captures the realizations assumed by such a parameter, the {\em robust} solution represents the {\em worst-case} over
this set of realizations. Naturally, an appropriate choice of such an
uncertainty set is crucial and as the availability of data reaches
levels hitherto unseen, there is growing interest in data-driven
approaches~\cite{bertsimas13} for constructing such sets. Our interest is in closely
related yet distinct settings driven by data in which  {the point estimate} of a  parameter may be obtained through a learning problem,
		 suitably defined through the aggregation of data. We provide
		 two instances of such problems:

\paragraph{{\em (i)} Portfolio optimization} Portfolio optimization
problems prescribe the optimal constructions of portfolios over a set of
assets, for which the mean and covariance of returns are not necessarily
known. Traditional approaches have assumed that such returns are
available while more recent robust optimization models have
utilized factor-based models in constructing uncertainty
sets~\cite{goldfarb03robust,bertsimas04,bertsimas08}. An
alternate, and possibly less conservative, data-driven model of such a problem that
employs a point estimate of the mean and covariance matrix requires the
solution of two coupled problems: (1) A portfolio optimization problem
parametrized by $(\theta^*,\Sigma^*)$ representing the mean and
covariance matrix of returns; and (2) A learning
problem that utilizes data to obtain the best $(\theta^*,\Sigma^*)$.
\paragraph{{\em (ii)} Power systems operation} The operation of power
grids relies on the solution of hourly (or more frequent) commitment and dispatch problems, each
of which is reliant on a range of parameters that are often uncertain.
These parameters include supply-side information regarding capacity of
wind-power as well as load forecasts. Recently robust optimization
approaches have proved to be exceedingly
popular~\cite{twostage14jiang,jiang14unified,jiang14robust}. An
	alternate formulation is given by the following two coupled problems: (1)
An economic dispatch problem parametrized by $\theta^*$, a vector that
captures the unknown supply and demand side parameters; and (2) A
learning problem that computes $\theta^*$ through the accumulation of
data.

We believe that such coupled formulations have broad applicability
beyond merely the settings mentioned above in (i) and (ii).
They may also find application in inventory control problems with
stochastic demand~\cite{robinv1,robinv2,robinv3,robinv4}, robust network
design~\cite{koster13robust}, robust routing in communication
networks~\cite{hijazi13robust}, amongst others.  To recap the difference between the two problem frameworks, it can be
seen that (R-Opt), a robust optimization framework, minimizes the
worst-case of the optimal value $f(x;\theta)$ over the uncertainty set
${\cal U}_{\theta}$ while (L-Opt) considers the joint solution of an
optimization problem in $x$, parametrized by $\theta^*$, where
$\theta^*$ is a solution to a learning problem with a
metric $g(\theta)$. The
following formulations may provide a clearer comparison:
$$\problemsmall{R-Opt}
	{}
	{\displaystyle \max_{\theta \in {\cal U_{\theta}}} \ f(x;\theta)}
		 {\begin{array}{r@{\ }c@{\ }l}
				 x \in X.
				 \end{array}} \qquad
 \problemjoint{L-Opt}
	{x \in X}
	{ f(x;\theta^*)}
{\theta \in \Theta}	 {g(\theta)}
	$$
We consider regimes where the function $f(x;\theta)$ is a convex
expected-value function and the resulting problem is given by the
following:
\begin{align} \label{problem_stoch_optim_0}
\tag{${\cal P}^o_x(\theta^*)$}
    \min_{x \in X} \ \mathbb{E}[f(x,\xi(\omega);\theta^*)],
\end{align}
where $X \subseteq \Real^n$ is a closed and convex set, $\xi: \Omega \to
\Real^d$ is a $d-$dimensional random variable defined on a
probability space $(\Omega,\mathcal{F}_x,\mathbb{P}_x)$, $f:X \times \Real^d \times
\Real^m \to \Real$ is a real-valued function, and $\theta^*$ denotes an
$m-$dimensional vector of parameters.  Estimating such
parameters often requires the resolution of a suitably defined learning
problem, given by a stochastic optimization problem
{(${\cal L}_\theta$)}, and defined next:
\begin{align} \label{problem_stoch_optim_theta}
\tag{${\cal L}_\theta$}
    \min_{\theta \in \Theta} \quad & g(\theta) \triangleq
	\mathbb{E}[g(\theta;\eta)],
\end{align}
where $\Theta \subseteq \Real^m$ is a closed and convex set,  $\eta:
\Lambda\to \Real^p$ is a random variable defined on a probability space
$(\Lambda,\mathcal{F}_{\theta},\mathbb{P}_{\theta})$, and $g:
\Theta \times \Lambda \to \Real$ is a real-valued function.  When one considers the joint problem of learning and optimization, then
there are at least two obvious approaches that immediately emerge as
possibilities:\\
\noindent {\bf {(a)} Sequential approach:} Consider an inherently serial process wherein the first stage
incorporates a model/parameter specification phase based on statistical
learning while the second stage leverages these findings in developing
and solving the actual optimization problem of interest.  Such an
ordering relies on  the learning problems being relatively small and
tractable compared to the optimization problems, ensuring that accurate
solutions are available within a reasonable time period.  Strictly
speaking, if one terminates the learning process prematurely with
an estimator $\hat \theta$, the
resulting estimator is essentially corrupted by error in that $\hat
\theta \neq \theta^*$.  This error
propagates into the solution $\hat x$ of the computational problem, denoted by
${\cal P}^o_x(\hat \theta)$ and the associated gap might be quite
significant. Note that unless the learning problem is solvable via a
finite termination algorithm, such a approach cannot provide asymptotic
statements but can, at best, provide approximate solutions.
Consequently, an inherently serial process reliant on a prematurely
truncated learning scheme often fails to provide accurate solutions to
the computational problem.\\
\noindent {\bf {(b)} Variational approach:} Under suitable convexity
and differentiability requirements, the following holds: $$ x^* \mbox{ solves } {({\cal P}^o_x(\theta^*))}
	\mbox{ and } \theta^* \mbox{ solves } ({\cal L}_\theta), $$ if and only if
	$(x^*,\theta^*)$ is a solution to the (stochastic) variational
	inequality problem  VI$(Z, F)$  \cite{Pang03I} where
	$$ Z \triangleq X \times \Theta \mbox{ and } H(z) \triangleq \pmat{
		\mathbb{E} [\nabla_x f(x,\theta;\xi)] \\
		 \mathbb{E} [\nabla_{\theta} g(\theta;\eta)]}. $$ Recall that $z^*$ is a
		solution to VI$(Z,F)$ if $(z-z^*)^T F(z) \geq 0$ for all $z \in
		Z$.  Furthermore, if $x^*$  and $\theta^*$ denote solutions to
		{$({\cal P}^o_x(\theta^*))$} and (${\cal L}_\theta$), respectively, then an
		 oft-used avenue in obtaining a solution $(x^*,\theta^*)$
		 entails obtaining a solution to VI$(Z,F)$. However, unless
		 rather strong assumptions are imposed, the map $H$ is not
		 necessarily monotone, precluding the use of recently developed
		 stochastic approximation schemes for solving
		 monotone stochastic variational inequality
		 problems~\cite{jiang08stochastic,koshal13regularized,wintersim13}, 
		 extragradient-based
		 variants~\cite{juditsky11,yousefian14optimal}, and accelerated
		 approaches~\cite{chen14}.\\
\noindent {\bf Simultaneous approach:} This paper is motivated by the inadequacy of available approaches and,
more generally, the  absence of {\em asymptotically convergent schemes with
provable non-asymptotic rates}. We present
a framework where the  learning and the computational problems are
solved {\bf simultaneously} via a joint set of stochastic approximation
schemes.  Such an avenue has several advantages. First, under such an
approach, one can provide rigorous statements of asymptotic convergence
of the obtained estimators for both, the solution to the computational
problem and the associated learning problem. Second, error bounds on the
expected error can be provided for a fixed number of steps under a
regime with constant and diminishing steplengths. Third, the statements
may be extended to the variational regime in which the computational
problem is given by the variational counterpart of
		{$({\cal P}^o_x(\theta^*))$}, given by  {$({\cal P}^v_x(\theta^*))$}; such a problem requires an $x^* \in X$ such that
\begin{align} \label{problem_stoch_optim_1}
\tag{{${\cal P}^v_x(\theta^*)$}}
     \mathbb{E}[F(x^*,\xi(\omega);\theta^*)]^T(\, x-x^*\, )\, \geq \, 0, \qquad \forall \, x \,
	 \in \, X,
\end{align}
where $X \subseteq \Real^n$ is a closed and convex set, $\xi: \Omega \to
\Real^d$ is a $d-$dimensional random variable defined on a
probability space $(\Omega,\mathcal{F}_x,\mathbb{P}_x)$, $F:X \times \Real^d \times
\Real^m \to \Real^n$ is a real-valued continuous mapping. Note that when
$F(x^*,\xi(\omega);\theta^*) \triangleq \nabla_x f(x^*;\xi;\theta^*)$,
	this reduces to a convex optimization problem. Furthermore,  the choice of
using a variational problem, rather than merely an optimization
problem, is founded on the need to model a variety of multiagent
settings complicated by a breadth of strategic interactions, ranging
from purely cooperative to distinctly
noncooperative~\cite{facchinei02finite}.
\subsection{Related decision-making models}
While unaware of the availability of general purpose
tools that can resolve precisely such problems, we describe 
settings where such questions have assumed relevance:\\
\noindent {\bf Adaptive control~\cite{astrom94adaptive}:} In tracking
problems in  adaptive
control~\cite{AguechMP00}, the authors consider a perturbation approach for
  analyzing a adaptive tracking algorithm and consider three estimation
  schemes, specifically least mean squares (LMS) 
  scheme, its recursive variant (RLMS), and the Kalman filter (which
  requires some distributional assumptions on the noise). First, much of this treatment is in the unconstrained
  regime with tractable (often quadratic estimation objectives),
  allowing for deriving closed-form (and often linear) update rules.
   Second, when the noise in the estimation process is
  Gaussian, the Kalman filter provides a minimum variance estimator. If
  on the other hand, the noise is non-Gaussian, then the Kalman filter
  provides the optimal linear estimator (in the sense that no linear
		  filter provides smaller variance).  In fact, these assumptions often form the basis of most adaptive
	control algorithms (cf.~\cite{LjungG90} and ~\cite{Kushner10} for a
			discussion adaptive control and stochastic approximation.)
	Our focus is on static stochastic problems with far less assumptions
	on the nature of the problem and the associated distributions.
	Specifically, we allow for more general stochastic convex objectives
	(or monotone maps in the context of VIs) in either the optimization
	or the learning problem, allow for convex feasibility sets for both
	the optimization or the learning problems, and impose relatively
	mild moment assumptions on the noise (unlike the Gaussian
			assumptions that are necessary in some of the estimation
			models).

\noindent {\bf Iterative learning control:}
A related avenue lies in iterative learning control  (ILC) has its roots in the studies
  by Uchiyama~\cite{Uchiyama78} and Arimoto et al.~\cite{Arimoto84}.
    ILC~\cite{Moore93control} is a form of tracking control employed for
repetitive control problems, instances being chemical batch processes,
robot arm manipulators, and reliability testing rigs. Our problem is
more restrictive in its focus (static problems) but allow for more
general settings in terms of nonlinearity and the underlying
distributional requirements.

 \noindent {\bf Multi-armed bandit problems:}  The multi-armed bandit
 (MAB) problem considers  the question of
  how to play given a collection of slot machines faced by a gambler.  Each machine provides a random reward from a
  distribution specific to that machine. The gambler aims
  to maximize the expected sum of rewards earned through a sequence of lever
  pulls.   
  {The total discounted reward is}
  maximized by the index policy that pulls the bandit having greatest
  value of the Gittins index \cite{gittins89}. In effect, the reward
  function needs to be learnt while optimizing the system. There has been
  significant research on such problems
  over the last several decades, including on the question of
  computation~\cite{Katehakis87} and 
  finite-time analysis~\cite{auer02}.

Finally, related questions
also been studied in revenue management where  \cite{cooper06models}
examined the devastating effect of learning with an incorrect model while
maximizing revenue.

\subsection{Outline and contributions}
Broadly speaking, this paper focuses on the development of {\em
	stochastic approximation schemes} that generate iterates $\{x_k\}$
	and $\{\theta_k\}$ and makes the following contributions.  (i) In
	Section~\ref{sec:II}, we prove the a.s. convergence of the produced
	iterates to the prescribed solutions and derive error bounds in a
	standard and an averaging regime. In particular, we quantify the
	degradation in the convergence rate from introducing an additional
	learning phase; (ii)  Section~\ref{sec:II} concludes with a precise
	non-asymptotic bound on the average regret associated with employing the
	proposed scheme instead of an offline algorithm; (iii) In Section~\ref{sec:III}, we extend the a.s.
	convergence results to accommodate stochastic variational inequality
	problems, rather than merely convex optimization problems. Error
	analysis is carried out under a suitably defined growth property{;}
	(iv) In Section~\ref{sec:IV}, we provide some supporting numerics
	and conclude in Section~\ref{sec:V}. 
	Finally, throughout the paper,
	we use $\|x\|$ to denote the Euclidean norm of a vector $x$, i.e.,
	$\|x\|=\sqrt{x^Tx}$ and $\Pi_K$ to denote the Euclidean projection
	operator onto a set $K$, i.e., $\Pi_K(x)\triangleq \argmin_{y\in
		K}\|x-y\|$.

\section{Stochastic optimization problems with imperfect information} \label{sec:II}
In this section, we focus on examining {$({\cal P}^o_x(\theta^*))$} under various
assumptions.  We begin by stating the coupled stochastic approximation scheme and
providing the necessary assumptions in Section~\ref{sec:II.I}.
Convergence analysis of the presented scheme is provided in
Section~\ref{sec:II.II} while diminishing and constant steplength rate
analysis is performed in
Section~\ref{sec:II.III}. We conclude with a discussion of
an online algorithm with the associated bounds on the decay of average
regret in Section~\ref{sec:II.IV}.

\subsection{Algorithm statement and assumptions}\label{sec:II.I}
As mentioned in the previous section, we propose a set of coupled
stochastic approximation schemes for computing $x^*$ and $\theta^*$.

\begin{alg}[{\bf Coupled SA schemes for stochastic optimization
	problems}] \label{alg:stoch_strongly_optim}
\noindent {\bf Step 0.} Given $x_0 \in X, \theta_0 \in \Theta$ and sequences $\{\gamma_{k,x},
	  \gamma_{k,\theta}\}$, $k := 0$\\
\noindent {\bf Step 1.}
\begin{align}
\tag{Opt$_k$}    x^{k+1} & := \Pi_{X}  \left(x^{k}-\gamma_{k,x}(\nabla_x
				f(x^k;\theta^k) + w^k)  \right), \qquad \, k \geq 0 \\
   \tag{Learn$_k$} \theta^{k+1} &: = \Pi_{\Theta}  \left(  \theta^{k} -
			\gamma_{k,\theta} ( \nabla_{\theta} g(\theta^{k}) + v^k )
			\right),\quad\qquad \quad \, k \geq 0
\end{align}
where $w^k \triangleq \nabla_x f(x^k;\theta^k,\xi^k) - \nabla_x
f(x^k;\theta^k)$ and $v^k \triangleq \nabla_{\theta}
g(\theta^{k};\eta^k) - \nabla_{\theta} g(\theta^{k})$.

\noindent {\bf Step 2.} If $k > K$, stop; else $k:=k+1$, go to Step. 1.
\end{alg}
\vspace{0.2in}

We begin by stating an assumption on the functions $f$ and $g$.
\begin{assumption}[Problem properties, A\ref{assump:stoch_strongly_optim}-1] \label{assump:stoch_strongly_optim}
Suppose the following hold:
\begin{enumerate}
  \item[(i)] For every {$\theta \in \Theta$}, $f(x;\theta)$ is strongly convex and continuously differentiable with Lipschitz continuous gradients in $x$ with convexity constant $\mu_{x}$ and Lipschitz constant $L_{x}$, respectively.
  \item[(ii)] For every {$x \in X$}, the gradient $\nabla_x f(x;\theta)$ is Lipschitz continuous in $\theta$ with
  constant $L_{\theta}$.
  \item[(iii)] The function $g(\theta)$ is strongly convex and continuously differentiable with Lipschitz continuous gradients in $\theta$ with
  convexity constant $\mu_{\theta}$ and Lipschitz constant $C_{\theta}$, respectively.
\end{enumerate}
\end{assumption}

Under Assumption (A\ref{assump:stoch_strongly_optim}-1), the coupled
problem admits a unique solution, as shown next.
\begin{lem}[Solvability]
Consider the problems {$({\cal P}^o_x(\theta^*))$} and {$({\cal L}_\theta)$} and suppose
assumption (A1) holds. Then {$({\cal P}^o_x(\theta^*))$} and {$({\cal L}_\theta)$}
	collectively admit a unique solution.
\end{lem}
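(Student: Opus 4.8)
The plan is to exploit the fact that the two problems are coupled in only one direction: the learning problem {$({\cal L}_\theta)$} depends solely on $\theta$ and is entirely independent of $x$. Consequently a collective solution can be constructed sequentially---first solve for $\theta^*$, then solve the $x$-problem that it parametrizes---and the uniqueness of each stage will propagate to uniqueness of the pair. This one-way decoupling is precisely what makes the claim tractable even though the joint map $H$ is not monotone.

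First I would handle the learning problem. By Assumption (A1)(iii), $g(\theta)=\mathbb{E}[g(\theta;\eta)]$ is strongly convex and continuous on the nonempty closed convex set $\Theta\subseteq\Real^m$. The key observation is that strong convexity forces coercivity: for any fixed $\bar\theta\in\Theta$ the inequality $g(\theta)\geq g(\bar\theta)+\nabla g(\bar\theta)^T(\theta-\bar\theta)+\tfrac{\mu_\theta}{2}\|\theta-\bar\theta\|^2$ shows that $g(\theta)\to\infty$ as $\|\theta\|\to\infty$, so the sublevel sets of $g$ restricted to $\Theta$ are bounded. Intersecting a nonempty bounded sublevel set with the closed set $\Theta$ yields a compact set on which the continuous function $g$ attains its minimum by Weierstrass's theorem; hence a minimizer $\theta^*$ exists even when $\Theta$ is unbounded. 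Strict convexity (a consequence of $\mu_\theta>0$) then rules out a second minimizer, giving uniqueness of $\theta^*$.

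Next, with $\theta^*$ fixed, I would apply the identical argument to {$({\cal P}^o_x(\theta^*))$}. By Assumption (A1)(i), the expected-value objective $\mathbb{E}[f(x,\xi(\omega);\theta^*)]$ is strongly convex (with modulus $\mu_x$) and continuous in $x$ on the nonempty closed convex set $X\subseteq\Real^n$. The same coercivity-plus-Weierstrass reasoning produces a minimizer, and strict convexity makes it the unique minimizer $x^*$. Combining the two stages, the pair $(x^*,\theta^*)$ solves the coupled problem; and since $\theta^*$ is the unique solution of {$({\cal L}_\theta)$} and $x^*$ is the unique solution of {$({\cal P}^o_x(\theta^*))$} for that particular $\theta^*$, no other pair can be a collective solution.

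I do not anticipate a genuine obstacle here: the only point requiring care is existence when $X$ or $\Theta$ is unbounded, which is exactly where strong convexity (rather than mere convexity) is invoked, via boundedness of the level sets. Everything else is a routine appeal to the classical existence-and-uniqueness result for strongly convex minimization over a closed convex set. Equivalently, one could phrase each stage through its first-order (variational) characterization as a strongly monotone VI over a closed convex set, which is known to admit a unique solution~\cite{Pang03I}; I would likely present the direct convexity argument and remark on this equivalent route.
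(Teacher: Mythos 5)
Your proposal is correct and follows essentially the same route as the paper, which disposes of the lemma in one line by citing the strong convexity of $g$ over $\Theta$ and of $f(\bullet;\theta^*)$ over $X$; your write-up simply makes explicit the details the paper leaves implicit (the one-way decoupling, coercivity of a strongly convex function, Weierstrass for existence on possibly unbounded sets, and strict convexity for uniqueness). No gap, and no genuinely different idea --- just a fully spelled-out version of the paper's argument.
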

\begin{proof} This follows from the strong convexity of $g$ over
$\Theta$ and the strong {convexity of $f(\bullet;\theta)$ }over $X$.
\end{proof}

Additionally, we make the following assumptions on the steplength
sequences employed in the algorithm.
\begin{assumption}[Steplength requirements, A\ref{assump:stoch_steplength_strongly_optim}-1] \label{assump:stoch_steplength_strongly_optim}
Let $\{\gamma_{k,x}\}$ and  $\{\gamma_{k,\theta}\}$ be chosen such that:
\begin{enumerate}
  \item[(i)] $\sum_{k=0}^{\infty}\gamma_{k,x}=\infty$, $\sum_{k=0}^{\infty}\gamma_{k,x}^{2} < \infty$
  \item[(ii)]  $\gamma_{k,\theta} = \gamma_{k,x} L_{\theta}^2/(\mu_{x}\mu_{\theta})$.
\end{enumerate}
\end{assumption}
We define a new probability space $(Z,\mathcal{F},\mathbb{P})$, where
$Z\triangleq\Omega\times\Lambda$,
	$\mathcal{F}\triangleq\mathcal{F}_x\times\mathcal{F}_{\theta}$ and
	$\mathbb{P}\triangleq\mathbb{P}_x\times\mathbb{P}_{\theta}$.  We use
	$\mathcal{F}_k$ to denote the sigma-field generated by the initial
	points $(x^0,\theta^0)$ and errors $(w^l,v^l)$ for $l =
	0,1,\cdots,k-1$, i.e., $\mathcal{F}_0
	=\left\{(x^0,\theta^0)\right\}$ and $ \mathcal{F}_k =
	\left\{(x^0,\theta^0), \left( (w^l,v^l), l = 0,1,\cdots,k-1
			\right)\right\} $ for $k\geq 1.$ We make the following
	assumptions on the filtration and errors.
\begin{assumption} [A\ref{assump:filtration}] \label{assump:filtration}
Let the following hold:
\begin{enumerate}
  \item[(i)] $\mathbb{E}[w^k \mid \mathcal{F}_k] = 0$ and
  $\mathbb{E}[v^k \mid \mathcal{F}_k] = 0$ $a.s.$ for all $k$.
  \item[(ii)] $\mathbb{E}[\|w^{k}\|^2 \mid \mathcal{F}_k] \leq
	  \nu^2_{x}$ and $\mathbb{E}[\|v^{k}\|^2 \mid \mathcal{F}_k]  \leq
		  \nu^2_{\theta}$ $a.s.$ for all $k$.
\end{enumerate}
\end{assumption}

We conclude this subsection by stating three results (without proof)
that will be subsequently employed in developing our convergence
statements. The first two of these are relatively well-known
super-martingale convergence results (cf.~\cite[Lemma 10,
Pg.~49--50]{Polyak87})
\begin{lem}  \label{lem:supermartingale}
Let $v_k$ be a sequence of
nonnegative random variables adapted to $\sigma$-algebra $\mathcal{F}_k$ and
such that
\begin{align*}
  \mathbb{E}[v_{k+1}|\mathcal{F}_k] \leq (1-u_k)v_k+\beta_k \quad
  \textrm{ for all } k\geq 0 \quad\mbox{almost surely, }
\end{align*}
where $0\leq u_k \leq 1$, $\beta_k\geq 0$, and $\sum_{k=0}^{\infty}u_k=\infty$, $\sum_{k=0}^{\infty}\beta_k<\infty$ and $\lim_{k\to\infty}\frac{\beta_k}{u_k}=0$. Then, $v_k\to 0$ $a.s.$
\end{lem}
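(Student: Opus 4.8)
The plan is to reduce the claim to the convergence theorem for nonnegative supermartingales by absorbing the summable perturbation $\{\beta_k\}$ into a shifted process, and then to upgrade the resulting limit to the value $0$ using the divergence $\sum_k u_k = \infty$.

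First I would introduce the tail sum $S_k \triangleq \sum_{j\geq k}\beta_j$, which is finite and satisfies $S_k \to 0$ because $\sum_k \beta_k < \infty$, and set $Y_k \triangleq v_k + S_k \geq 0$. Using the hypothesis together with $0 \le u_k \le 1$ and $v_k \ge 0$, a one-line computation gives
$$\mathbb{E}[Y_{k+1}\mid\mathcal{F}_k] = \mathbb{E}[v_{k+1}\mid\mathcal{F}_k] + S_{k+1} \le (1-u_k)v_k + \beta_k + S_{k+1} = (1-u_k)v_k + S_k \le Y_k,$$
so that $\{Y_k\}$ is a nonnegative supermartingale adapted to $\{\mathcal{F}_k\}$. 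The supermartingale convergence theorem then yields a finite random variable $Y_\infty$ with $Y_k \to Y_\infty$ almost surely, and since $S_k \to 0$ we conclude that $v_k \to v_\infty \triangleq Y_\infty \ge 0$ almost surely. (If one is uneasy about the integrability of $v_0$, the same conclusion follows from the Robbins--Siegmund form of the result, which requires only the stated almost-sure conditional inequality.)

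Next I would show that $v_\infty = 0$. Rewriting the estimate as $\mathbb{E}[Y_{k+1}\mid\mathcal{F}_k] \le Y_k - u_k v_k$, taking total expectations and telescoping over $k = 0,\ldots,K$ gives
$$\sum_{k=0}^{K} u_k\,\mathbb{E}[v_k] \le \mathbb{E}[Y_0] - \mathbb{E}[Y_{K+1}] \le \mathbb{E}[Y_0] < \infty,$$
so that $\sum_{k=0}^\infty u_k\,\mathbb{E}[v_k] < \infty$. Since $\sum_k u_k = \infty$, this forces $\liminf_k \mathbb{E}[v_k] = 0$: otherwise $\mathbb{E}[v_k]$ would be bounded below by a positive constant for all large $k$, making the weighted series diverge. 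Finally, Fatou's lemma applied to $v_k \to v_\infty$ yields $\mathbb{E}[v_\infty] \le \liminf_k \mathbb{E}[v_k] = 0$, and nonnegativity of $v_\infty$ then gives $v_\infty = 0$ almost surely, which is the claim.

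I expect the main obstacle to be the integrability and measurability bookkeeping rather than the structure of the argument: one must ensure the $Y_k$ are genuinely integrable (or else invoke the almost-sure conditional-inequality version of supermartingale convergence) and carefully justify the telescoping and the Fatou step. It is worth noting that the extra hypothesis $\beta_k/u_k \to 0$ is not actually needed along this route; it is instead exactly what makes the deterministic companion recursion $V_{k+1} \le (1-u_k)V_k + \beta_k$ for $V_k \triangleq \mathbb{E}[v_k]$ converge to $0$ by a direct argument (writing $\beta_k = u_k \varepsilon_k$ with $\varepsilon_k \to 0$ and comparing against products of the factors $(1-u_k)$), but that elementary argument delivers only convergence in mean, so the supermartingale construction above is what secures the almost-sure statement.
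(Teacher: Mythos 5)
Your proof is correct, and there is in fact nothing in the paper to compare it against: the paper states this lemma \emph{without proof}, citing Lemma~10, pp.~49--50 of Polyak's book. Your shifted-supermartingale argument (absorbing the tail sums $S_k=\sum_{j\geq k}\beta_j$ into $Y_k=v_k+S_k$, then telescoping to get $\sum_k u_k\mathbb{E}[v_k]<\infty$ and finishing with Fatou) is the standard route and every step checks out when $\mathbb{E}[v_0]<\infty$; note also that your reading of $u_k,\beta_k$ as deterministic sequences matches how the lemma is used in the paper (there $u_k=\alpha\gamma_{k,x}-\beta\gamma_{k,x}^2$ and $\beta_k=\delta\gamma_{k,x}^2$ are deterministic). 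Two refinements are worth recording. First, the integrability caveat you flag is genuine and affects not only the convergence step but also your second step: if $\mathbb{E}[Y_0]=\infty$ the telescoped inequality is vacuous. The clean fix is localization: on the $\mathcal{F}_0$-measurable event $\{v_0\leq n\}$ the truncated process $v_k\mathbbm{1}_{\{v_0\leq n\}}$ satisfies the same recursion with integrable initial condition, so the conclusion holds on each such event and hence on their union. Second, a shorter and fully pathwise proof follows from the paper's own Lemma~\ref{lem:supermartingale2} (the Robbins--Siegmund result stated immediately after): apply it with multiplicative perturbation zero and subtracted term $\gamma_k:=u_kv_k$, which gives that $v_k$ converges a.s.\ and $\sum_k u_kv_k<\infty$ a.s.; since $\sum_k u_k=\infty$, on the event where the limit is positive the series $\sum_k u_kv_k$ would diverge, so the limit is zero a.s. This bypasses expectations and Fatou entirely, handles non-integrable $v_0$ for free, and confirms your closing observation: under $\sum_k\beta_k<\infty$ the hypothesis $\beta_k/u_k\to 0$ is indeed superfluous for the almost-sure statement---it is the operative condition only in variants where $\beta_k$ is not assumed summable.
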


\begin{lem}\label{lem:supermartingale2}
Let $v_k$, $u_k$, $\beta_k$ and $\gamma_k$ be non-negative random
variables adapted to $\sigma$-algebra $\mathcal{F}_k$. If
$\sum_{k=0}^{\infty}u_k<\infty$, $\sum_{k=0}^{\infty}\beta_k<\infty$ and
\begin{align*}
  \mathbb{E}[v_{k+1}|\mathcal{F}_k] \leq (1+u_k)v_k-\gamma_k+\beta_k
  \quad  \textrm{ for all } k\geq 0 \quad\mbox{almost surely.}
\end{align*}
Then, $\{v_k\}$ is convergent  and $\sum_{k=0}^{\infty}\gamma_k<\infty$
almost surely.
\end{lem}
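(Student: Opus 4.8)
The plan is to recognize Lemma~\ref{lem:supermartingale2} as the Robbins--Siegmund almost-supermartingale convergence theorem and to prove it by converting the given recursion into a genuine nonnegative supermartingale, to which Doob's convergence theorem applies. Two features obstruct a direct application: the multiplicative factor $(1+u_k)$ in front of $v_k$, and the additive drift $\beta_k$; both must be absorbed. Throughout I would assume, as is implicit in the statement, that each $v_k$ is integrable so that the conditional expectations are well defined.

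First I would strip off the multiplicative factor. Since $u_k\ge 0$ and $\sum_k u_k<\infty$ almost surely, the product $a_k\triangleq\prod_{i=0}^{k-1}(1+u_i)^{-1}$ is well defined, is $\mathcal{F}_{k-1}$-measurable, is nonincreasing with values in $(0,1]$, and converges almost surely to a strictly positive limit $a_\infty\in(0,1]$. Multiplying the hypothesis by the $\mathcal{F}_k$-measurable quantity $a_{k+1}=a_k(1+u_k)^{-1}$ and writing $V_k\triangleq a_kv_k$, $b_k\triangleq a_{k+1}\beta_k$, and $c_k\triangleq a_{k+1}\gamma_k$ gives
\[
\mathbb{E}[V_{k+1}\mid\mathcal{F}_k]\ \le\ V_k-c_k+b_k .
\]
All three sequences are nonnegative, and since $a_{k+1}\le 1$ we have $b_k\le\beta_k$, so $\sum_k b_k<\infty$ almost surely.

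Next I would compensate the drift. Set $\tilde V_k\triangleq V_k+\sum_{i=0}^{k-1}(c_i-b_i)$; a one-line computation using the displayed inequality shows $\mathbb{E}[\tilde V_{k+1}\mid\mathcal{F}_k]\le\tilde V_k$, so $\tilde V_k$ is a supermartingale. The genuine difficulty is that $\tilde V_k\ge -\sum_{i=0}^{k-1}b_i$ is bounded below only by a finite \emph{random} quantity, so Doob's theorem for nonnegative supermartingales does not apply verbatim; I regard this as the main obstacle. I would handle it by localization. For each integer $m\ge 1$ define the stopping time $\tau_m\triangleq\inf\{k:\sum_{i=0}^{k}b_i>m\}$, chosen so that the partial sum appearing in the lower bound satisfies $\sum_{i=0}^{(k\wedge\tau_m)-1}b_i\le m$ for every $k$. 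Consequently $\tilde V_{k\wedge\tau_m}+m$ is a genuine nonnegative supermartingale (optional stopping preserves the supermartingale property in discrete time), hence converges almost surely. On the event $\{\sum_i b_i\le m\}$ one has $\tau_m=\infty$, so $\tilde V_k$ itself converges there; letting $m\to\infty$ and using $\{\sum_i b_i\le m\}\uparrow\{\sum_i b_i<\infty\}$, a full-measure event, shows that $\tilde V_k$ converges almost surely.

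Finally I would unwind the transformations. Convergence of $\tilde V_k$ together with $\sum_i b_i<\infty$ forces $V_k+\sum_{i=0}^{k-1}c_i$ to converge; since $V_k\ge 0$ and the partial sums of the nonnegative sequence $\{c_i\}$ are nondecreasing, this simultaneously yields $\sum_i c_i<\infty$ and the existence of $\lim_k V_k$ almost surely. Dividing by $a_k\to a_\infty\in(0,1]$ shows that $v_k=V_k/a_k$ converges almost surely, while the bound $\gamma_k=c_k/a_{k+1}\le c_k/a_\infty$ gives $\sum_k\gamma_k\le a_\infty^{-1}\sum_k c_k<\infty$ almost surely, which are precisely the two claimed conclusions.
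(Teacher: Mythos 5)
Your proof is correct, but there is nothing in the paper to compare it against: the paper deliberately states Lemma~\ref{lem:supermartingale2} \emph{without proof}, invoking it as a well-known super-martingale convergence result with a citation to Polyak's book --- it is precisely the Robbins--Siegmund theorem on nonnegative almost-supermartingales. What you have written is, in essence, the classical proof of that theorem, and its steps all check out: discounting by $a_k=\prod_{i=0}^{k-1}(1+u_i)^{-1}$ is legitimate because $\sum_k u_k<\infty$ a.s.\ makes the infinite product converge to a strictly positive limit; the compensated process $\tilde V_k$ satisfies the supermartingale inequality because the compensator $\sum_{i\le k}(c_i-b_i)$ is $\mathcal{F}_k$-measurable; the localization by $\tau_m$ correctly converts the finite \emph{random} lower bound $-\sum_i b_i$ into a deterministic one so that Doob's convergence theorem applies; and the unwinding (monotone partial sums of $c_i$ bounded above by a convergent sequence, hence convergent, then $V_k$, then division by $a_k\to a_\infty>0$) recovers exactly the two stated conclusions. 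One refinement worth recording: your blanket integrability assumption on every $v_k$ is not quite what is needed, since $\tilde V_k$ also contains $b_i$ and $c_i$, whose integrability the lemma does not grant, so the unstopped $\tilde V_k$ need not be an integrable supermartingale. The cleanest repair stays inside your own localization: because all data are nonnegative, every conditional expectation is well defined in $[0,\infty]$, the stopped process $X_k\triangleq\tilde V_{k\wedge\tau_m}+m$ is nonnegative and satisfies $\mathbb{E}[X_{k+1}]=\mathbb{E}\left[\mathbb{E}[X_{k+1}\mid\mathcal{F}_k]\right]\le\mathbb{E}[X_k]$ by the tower property, whence $\mathbb{E}[X_k]\le\mathbb{E}[X_0]=\mathbb{E}[v_0]+m$ by induction; thus integrability of $v_0$ alone makes $X_k$ a bona fide nonnegative supermartingale and Doob applies as you intended. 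With that cosmetic adjustment, your argument is a complete, self-contained proof of the result that the paper only cites.
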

Finally, we present a contraction result reliant on
monotonicity and Lipschitz continuity requirements {(cf.~\cite[Theorem 12.1.2,
Pg.~1109]{Pang03II})}.
\begin{lem} \label{lem:strongly_Lipschitz}
Let $H:K\to\mathbb{R}^n$ be a mapping that is strongly
		monotone over $K$ with constant $\mu$, and Lipschitz
		continuous over $K$ with constant $L$. If $q \triangleq
		\sqrt{1-2\mu \gamma + \gamma^2 L^2}$, then for any $\gamma > 0$,
		we have that for any $x,y$, we have 
$\| \Pi_K(x-\gamma H(x))- \Pi_K(y-\gamma H(y))\| \leq q \|x-y\|.$
\end{lem}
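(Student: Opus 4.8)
The plan is to reduce the claim to the nonexpansiveness of the Euclidean projection and then to control the pre-projection displacement using the strong monotonicity and Lipschitz hypotheses. First I would set $a \triangleq x-\gamma H(x)$ and $b \triangleq y-\gamma H(y)$ and invoke the standard fact that $\Pi_K$ is nonexpansive, i.e. $\|\Pi_K(a)-\Pi_K(b)\| \leq \|a-b\|$ for all $a,b$; this is immediate from the variational characterization of the projection onto the closed convex set $K$. It therefore suffices to bound $\|a-b\| = \|(x-y)-\gamma(H(x)-H(y))\|$ by $q\,\|x-y\|$.

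Next I would expand the squared norm,
\begin{align*}
\|(x-y)-\gamma(H(x)-H(y))\|^2 = \|x-y\|^2 - 2\gamma (x-y)^T(H(x)-H(y)) + \gamma^2\|H(x)-H(y)\|^2,
\end{align*}
and estimate the two nontrivial terms. Strong monotonicity gives $(x-y)^T(H(x)-H(y)) \geq \mu\|x-y\|^2$, so the cross term is at most $-2\gamma\mu\|x-y\|^2$, while Lipschitz continuity gives $\|H(x)-H(y)\|^2 \leq L^2\|x-y\|^2$. Substituting both yields
\begin{align*}
\|a-b\|^2 \leq (1-2\mu\gamma+\gamma^2 L^2)\|x-y\|^2 = q^2\|x-y\|^2,
\end{align*}
and taking square roots, combined with nonexpansiveness, delivers the bound $\|\Pi_K(a)-\Pi_K(b)\| \leq q\,\|x-y\|$.

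The only point requiring care is that $q$ is a well-defined real number, i.e. that the radicand $1-2\mu\gamma+\gamma^2 L^2$ is nonnegative. This is automatic: combining strong monotonicity with the Cauchy--Schwarz inequality and the Lipschitz bound gives $\mu\|x-y\|^2 \leq (x-y)^T(H(x)-H(y)) \leq L\|x-y\|^2$, whence $\mu \leq L$, so the quadratic $\gamma^2 L^2 - 2\mu\gamma + 1$ has discriminant $4(\mu^2-L^2) \leq 0$ and is therefore nonnegative for every $\gamma > 0$. (Equivalently, the radicand is nonnegative simply because it equals $\|a-b\|^2/\|x-y\|^2$ when $x \neq y$.) I do not anticipate a genuine obstacle here: once nonexpansiveness is in hand the result is a one-line estimate, and the sole subtlety is the bookkeeping that keeps $q$ real.
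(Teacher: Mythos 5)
Your proof is correct and complete: nonexpansiveness of the Euclidean projector reduces the claim to bounding the pre-projection displacement, and the expansion of the squared norm combined with strong monotonicity and Lipschitz continuity gives exactly $q^2\|x-y\|^2$; your observation that $\mu \leq L$ keeps the radicand nonnegative is the right way to settle the well-definedness of $q$. Note that the paper itself offers no proof of this lemma --- it is stated as a known contraction result with a citation to Theorem 12.1.2 of Facchinei and Pang --- and your argument is precisely the standard one underlying that reference, so there is nothing to reconcile between the two.
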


\subsection{Almost-sure convergence}\label{sec:II.II}
Our first convergence result shows that under the prescribed assumptions, Algorithm
\ref{alg:stoch_strongly_optim} generates a sequence of iterates that
converges to the unique solution.\vspace{0.2in}
\begin{prop}[{\bf Almost-sure convergence under strong convexity of $f$}] \label{thm:stoch_strongly_optim}
Suppose (A\ref{assump:stoch_strongly_optim}-1), (A\ref{assump:stoch_steplength_strongly_optim}-1) and (A\ref{assump:filtration}) hold.  Let $\{x^k,\theta^k\}$ be computed via Algorithm \ref{alg:stoch_strongly_optim}.
Then, $x^{k}\to x^{*}$ and $\theta^{k} \to \theta^{*}$ $a.s.$ as
$k\rightarrow\infty$, where $\theta^*$ denotes the unique solution
of \eqref{problem_stoch_optim_theta}
 and $x^*$ denotes the unique solution to \eqref{problem_stoch_optim_0}.
\end{prop}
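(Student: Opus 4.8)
The plan is to exploit the one-directional coupling of the scheme: the learning recursion (Learn$_k$) does not depend on $x^k$, so I would first establish $\theta^k\to\theta^*$ in isolation and then treat $\{\theta^k\}$ as a vanishing perturbation driving the optimization recursion (Opt$_k$).

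\emph{Step 1 ($\theta$-convergence).} Since $\theta^*=\Pi_\Theta(\theta^*-\gamma_{k,\theta}\nabla_\theta g(\theta^*))$ for every $k$, and $\nabla_\theta g$ is strongly monotone with constant $\mu_\theta$ and Lipschitz with constant $C_\theta$ by (A\ref{assump:stoch_strongly_optim}-1)(iii), I would write $\theta^{k+1}-\theta^*=\Pi_\Theta(\theta^k-\gamma_{k,\theta}(\nabla_\theta g(\theta^k)+v^k))-\Pi_\Theta(\theta^*-\gamma_{k,\theta}\nabla_\theta g(\theta^*))$, square, and take conditional expectation. The cross term vanishes by (A\ref{assump:filtration})(i), while Lemma~\ref{lem:strongly_Lipschitz} applied to the noiseless part together with (A\ref{assump:filtration})(ii) yields
\[
\mathbb{E}[\|\theta^{k+1}-\theta^*\|^2\mid\mathcal{F}_k]\le(1-2\mu_\theta\gamma_{k,\theta}+\gamma_{k,\theta}^2C_\theta^2)\|\theta^k-\theta^*\|^2+\gamma_{k,\theta}^2\nu_\theta^2.
\]
For $k$ large, $\gamma_{k,\theta}\to0$ makes $u_k:=2\mu_\theta\gamma_{k,\theta}-\gamma_{k,\theta}^2C_\theta^2\in[0,1]$ with $\sum_k u_k=\infty$, and $\beta_k:=\gamma_{k,\theta}^2\nu_\theta^2$ is summable with $\beta_k/u_k\to0$; Lemma~\ref{lem:supermartingale} then gives $\theta^k\to\theta^*$ a.s. Reading the same inequality through Lemma~\ref{lem:supermartingale2} additionally yields $\sum_k\gamma_{k,\theta}\|\theta^k-\theta^*\|^2<\infty$ a.s., and by the steplength relation (A\ref{assump:stoch_steplength_strongly_optim}-1)(ii) this is equivalent to $\sum_k\gamma_{k,x}\|\theta^k-\theta^*\|^2<\infty$ a.s.---a fact I will need below.

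\emph{Step 2 ($x$-convergence).} Using $x^*=\Pi_X(x^*-\gamma_{k,x}\nabla_x f(x^*;\theta^*))$, nonexpansivity of $\Pi_X$, and the zero-mean property of $w^k$, I would obtain $\mathbb{E}[\|x^{k+1}-x^*\|^2\mid\mathcal{F}_k]\le\|d^k\|^2+\gamma_{k,x}^2\nu_x^2$, where $d^k=(x^k-x^*)-\gamma_{k,x}(\nabla_x f(x^k;\theta^k)-\nabla_x f(x^*;\theta^*))$. Splitting the gradient difference at $\nabla_x f(x^*;\theta^k)$ and invoking strong convexity and Lipschitzness in $x$ (A\ref{assump:stoch_strongly_optim}-1)(i) for the first piece (via Lemma~\ref{lem:strongly_Lipschitz}) and Lipschitzness in $\theta$ (A\ref{assump:stoch_strongly_optim}-1)(ii) for the second, I get $\|d^k\|\le q_x\|x^k-x^*\|+\gamma_{k,x}L_\theta\|\theta^k-\theta^*\|$ with $q_x=\sqrt{1-2\mu_x\gamma_{k,x}+\gamma_{k,x}^2L_x^2}$. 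The delicate point is the cross term after squaring: rather than the crude $2ab\le a^2+b^2$ (which would spuriously require $2\mu_x>L_\theta$), I would apply Young's inequality with weight $\epsilon=\mu_x\gamma_{k,x}$ to obtain $2q_x\gamma_{k,x}L_\theta\|x^k-x^*\|\,\|\theta^k-\theta^*\|\le\mu_x\gamma_{k,x}\|x^k-x^*\|^2+\tfrac{q_x^2\gamma_{k,x}L_\theta^2}{\mu_x}\|\theta^k-\theta^*\|^2$. Since then $q_x^2+\mu_x\gamma_{k,x}=1-\mu_x\gamma_{k,x}+\gamma_{k,x}^2L_x^2\le1-\tfrac12\mu_x\gamma_{k,x}$ for $k$ large, with $u_k:=\tfrac12\mu_x\gamma_{k,x}$ I arrive at
\[
\mathbb{E}[\|x^{k+1}-x^*\|^2\mid\mathcal{F}_k]\le(1-u_k)\|x^k-x^*\|^2+\beta_k,\qquad\beta_k:=\Big(\tfrac{q_x^2\gamma_{k,x}L_\theta^2}{\mu_x}+\gamma_{k,x}^2L_\theta^2\Big)\|\theta^k-\theta^*\|^2+\gamma_{k,x}^2\nu_x^2.
\]

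Finally I would verify the hypotheses of Lemma~\ref{lem:supermartingale} for this recursion: $\sum_k u_k=\infty$ follows from $\sum_k\gamma_{k,x}=\infty$; $\sum_k\beta_k<\infty$ a.s. follows from $\sum_k\gamma_{k,x}\|\theta^k-\theta^*\|^2<\infty$ (Step 1), convergence (hence boundedness) of $\{\|\theta^k-\theta^*\|^2\}$, and $\sum_k\gamma_{k,x}^2<\infty$; and $\beta_k/u_k\to0$ a.s. because $\|\theta^k-\theta^*\|^2\to0$ and $\gamma_{k,x}\to0$. Lemma~\ref{lem:supermartingale} then yields $\|x^k-x^*\|^2\to0$, i.e.\ $x^k\to x^*$ a.s., completing the proof. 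I expect the main obstacle to be precisely the cross-term handling in Step~2: the learning error enters the optimization recursion multiplicatively through $\|x^k-x^*\|\,\|\theta^k-\theta^*\|$, and only the $\gamma_{k,x}$-weighted Young split---combined with the summability $\sum_k\gamma_{k,x}\|\theta^k-\theta^*\|^2<\infty$ harvested via Lemma~\ref{lem:supermartingale2} in Step~1---keeps the perturbation both summable and negligible relative to $u_k$, without imposing any extra relation between $\mu_x$ and $L_\theta$.
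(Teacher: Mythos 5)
Your proof is correct, but it takes a genuinely different route from the paper's. The paper works with the joint Lyapunov function $\|x^k-x^*\|^2+\|\theta^k-\theta^*\|^2$: after deriving the same two per-iterate bounds (including the same $\mu_x\gamma_{k,x}$-weighted Young split of the cross term that you use), it \emph{adds} them and exploits the steplength coupling (A\ref{assump:stoch_steplength_strongly_optim}-1)(ii) so that the negative drift $-2\gamma_{k,\theta}\mu_\theta\|\theta^k-\theta^*\|^2$ of the learning recursion absorbs the contamination term $+\gamma_{k,x}(L_\theta^2/\mu_x)\|\theta^k-\theta^*\|^2$ flowing out of the optimization recursion; the summed inequality then has \emph{deterministic} coefficients $(1-\alpha\gamma_{k,x}+\beta\gamma_{k,x}^2)$ and noise $\delta\gamma_{k,x}^2$, and Lemma~\ref{lem:supermartingale} is invoked once. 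You instead run a cascade: first settle the learning recursion on its own (it is autonomous), harvesting via Lemma~\ref{lem:supermartingale2} not only $\theta^k\to\theta^*$ a.s.\ but also the pathwise summability $\sum_k\gamma_{k,\theta}\|\theta^k-\theta^*\|^2<\infty$ a.s., and then treat the learning error as an exogenous, a.s.-summable perturbation of the $x$-recursion. This is more modular, and it exposes something the paper's argument hides: the exact constant $L_\theta^2/(\mu_x\mu_\theta)$ in (A\ref{assump:stoch_steplength_strongly_optim}-1)(ii) is immaterial for this proposition---any pair of non-summable, square-summable steplengths with $\gamma_{k,x}=O(\gamma_{k,\theta})$ would let the summability transfer---whereas in the paper's joint-Lyapunov argument the precise coupling is what makes the combined coefficient contract. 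One technical caveat in your Step 2: your perturbation $\beta_k$ is a random variable (it contains $\|\theta^k-\theta^*\|^2$), while Lemma~\ref{lem:supermartingale} is a Polyak-type lemma stated for, and in the paper applied with, deterministic sequences. The clean fix stays inside the paper's toolkit: apply Lemma~\ref{lem:supermartingale2} (which explicitly allows adapted random sequences) to $\mathbb{E}[v_{k+1}\mid\mathcal{F}_k]\le v_k-u_kv_k+\beta_k$ with $v_k=\|x^k-x^*\|^2$ and $\gamma_k:=u_kv_k$, concluding that $v_k$ converges a.s.\ and $\sum_k u_kv_k<\infty$ a.s.; since $\sum_k u_k=\infty$, the a.s.\ limit must be zero. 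With this route the condition $\beta_k/u_k\to 0$ is not even needed.
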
\vspace{0.2in}

\begin{proof}
Note that $x^* = \Pi_{X} ( x^{*} - \gamma_{k,x}
		\nabla_x f(x^*;\theta^*)).$
Then, by the nonexpansivity of the Euclidean projector, $\|x^{k+1}-x^*\|^2$ may be bounded as follows:
 \begin{align*}
\notag     \|x^{k+1}-x^{*}\|^2 & =\|\Pi_{X}(x^{k}-\gamma_{k,x}(\nabla_x f(x^k;\theta^{k}) + w^k) )  -\Pi_{X}(x^{*}-\gamma_{k,x}\nabla_x f(x^{*};\theta^{*}))\|^2\\
\notag  &
\leq\|(x^{k}-x^{*})-\gamma_{k,x}(\nabla_x f(x^{k};\theta^{k})-\nabla_x f(x^{*};\theta^{*}))
-\gamma_{k,x} w^k \|^2.
\end{align*}
By adding and subtracting $\gamma_{k,x} \nabla_x f(x^*,\theta^k)$, this expression can be further expanded as follows:
\begin{align*}
\notag & \ \quad  \|(x^{k}-x^{*})-\gamma_{k,x}(\nabla_x f(x^{k};\theta^{k})-\nabla_x f(x^{*};\theta^{k}))    - \gamma_{k,x}(\nabla_x f(x^{*};\theta^{k})-\nabla_x f(x^{*};\theta^{*})) -\gamma_{k,x} w^k \|^2 \\
\notag  & = \|(x^{k}-x^{*})-\gamma_{k,x}(\nabla_x f(x^{k};\theta^{k})-\nabla_x f(x^{*};\theta^{k}))\|^2   + \gamma_{k,x}^2 \|\nabla_x f(x^{*};\theta^{k})-\nabla_x f(x^{*};\theta^{*})\|^2 + \gamma_{k,x}^2 \| w^k \|^2 \\
\notag        & \ \quad - 2 \gamma_{k,x} [ (x^{k}-x^{*})-\gamma_{k,x}(\nabla_x f(x^{k};\theta^{k})-\nabla_x f(x^{*};\theta^{k})) ]^T(\nabla_x f(x^{*};\theta^{k})-\nabla_x f(x^{*};\theta^{*})) \\
       & \ \quad - 2 \gamma_{k,x} [ (x^{k}-x^{*})-\gamma_{k,x}(\nabla_x f(x^{k};\theta^{k})-\nabla_x f(x^{*};\theta^{k})) ]^T w^k    + 2 \gamma_{k,x}^2 (\nabla_x f(x^{*};\theta^{k})-\nabla_x f(x^{*};\theta^{*}))^T w^k.
\end{align*}
By leveraging the fact that $\mathbb{E}[w^k \mid \mathcal{F}_k] = 0$, we have
 \begin{align}  \label{eq:stoch_strongly_optim_exp}
  \mathbb{E}[\|x^{k+1}-x^{*}\|^2 \mid \mathcal{F}_k]
   & \leq \textbf{Term 1} + \textbf{Term 2} +\textbf{Term 3}  + \gamma_{k,x}^2 \mathbb{E}[\| w^k \|^2\mid\mathcal{F}_k],
\end{align}
where {\bf Terms 1 -- 3} are defined as follows:
		  \begin{align*}
\mbox{\bf Term 1}  &\triangleq \|(x^{k}-x^{*})-\gamma_{k,x}(\nabla_x
		f(x^{k};\theta^{k})-\nabla_x f(x^{*};\theta^{k}))\|^2,\\
\mbox{\bf Term 2}   & \triangleq \gamma_{k,x}^2 \|\nabla_x f(x^{*};\theta^{k})-\nabla_x f(x^{*};\theta^{*})\|^2, \\
\mbox{ and } \mbox{\bf Term 3}  &\triangleq - 2 \gamma_{k,x} [
(x^{k}-x^{*})-\gamma_{k,x}(\nabla_x f(x^{k};\theta^{k})-\nabla_x
		f(x^{*};\theta^{k})) ]^T  (\nabla_x f(x^{*};\theta^{k})-\nabla_x f(x^{*};\theta^{*})).
\end{align*}
By Lemma \ref{lem:strongly_Lipschitz} and
(A\ref{assump:stoch_strongly_optim}-1), it follows that
\begin{align} \label{eq:stoch_strongly_optim_exp_term1}
\textbf{Term 1} \leq (1-2\gamma_{k,x} \mu_x + \gamma_{k,x}^2 L_x^2) \|x^{k}-x^{*}\|^2.
\end{align}
Furthermore, the Lipschitz continuity of $\nabla_x f(x^*;\theta)$ in
$\theta$ (A\ref{assump:stoch_strongly_optim}-1) allows for deriving the
following bound:
\begin{align} \label{eq:stoch_strongly_optim_exp_term2}
\textbf{Term 2} \leq  \gamma_{k,x}^2 L_{\theta}^2 \|\theta^{k}-\theta^{*}\|^2.
\end{align}
Finally, {\bf Term 3} can be bounded by invoking the Cauchy-Schwarz
inequality, Lemma \ref{lem:strongly_Lipschitz},
	(A\ref{assump:stoch_strongly_optim}-1) and the triangle inequality, we
	obtain
 \begin{align}
 \begin{aligned}\label{eq:stoch_strongly_optim_exp_term3}
     & \quad\ 2\gamma_{k,x}  \| (x^{k}-x^{*})-\gamma_{k,x}(\nabla_x f(x^{k};\theta^{k})-\nabla_x f(x^{*};\theta^{k})) \| \| \nabla_x f(x^{*};\theta^{k})-\nabla_x f(x^{*};\theta^{*}) \| \\
   &  \leq  2\gamma_{k,x}\sqrt{1-2\gamma_{k,x} \mu_x + \gamma_{k,x}^2 L_x^2} \|x^{k}-x^{*}\| L_{\theta} \|\theta^{k}-\theta^{*}\| \\
   &  \leq 2 \gamma_{k,x} L_{\theta} \|x^{k}-x^{*}\| \|\theta^{k}-\theta^{*}\| \\
 & \leq  \gamma_{k,x} \mu_{x} \|x^{k}-x^{*}\|^2 + \gamma_{k,x} (L_{\theta}^2/\mu_{x}) \|\theta^{k}-\theta^{*}\|^2,
 \end{aligned}
\end{align}
where the last inequality follows from $2a^Tb \leq \|a\|^2+\|b\|^2.$
Combining \eqref{eq:stoch_strongly_optim_exp},
		  \eqref{eq:stoch_strongly_optim_exp_term1},
		  \eqref{eq:stoch_strongly_optim_exp_term2} and
		  \eqref{eq:stoch_strongly_optim_exp_term3}, we get
\begin{align}
 \begin{aligned}   \label{eq:stoch_strongly_optim_exp_final}
 \mathbb{E}[\|x^{k+1}-x^{*}\|^2\mid\mathcal{F}_k]
      & \leq (1-\gamma_{k,x} \mu_x + \gamma_{k,x}^2 L_x^2) \|x^{k}-x^{*}\|^2 \\
      & + (\gamma_{k,x} L_{\theta}^2/\mu_{x} + \gamma_{k,x}^2 L_{\theta}^2) \|\theta^{k}-\theta^{*}\|^2 + \gamma_{k,x}^2  \nu_x^2.
       \end{aligned}
\end{align}
Recall  that $\theta^*$ satisfies the fixed point relationship $\theta^* = \Pi_{\Theta} ( \theta^{*} - \gamma_{\theta,k}
		\nabla_{\theta} g(\theta^*)),$ which, together with
non-expansivity of the Euclidean projector, allows for
deriving the following bound on $\|\theta^{k+1}-\theta^*\|^2$:
\begin{align*} 
& \notag \quad\ \|\theta^{k+1}-\theta^*\|^2
   =  \| \Pi_{\Theta}  (  \theta^{k} - \gamma_{\theta,k} ( \nabla_{\theta} g(\theta^{k}) + v^k ) ) - \Pi_{\Theta} ( \theta^{*} - \gamma_{\theta,k}
		\nabla_{\theta} g(\theta^*)) \|^2 \\
 \notag & \leq \| \theta^{k}-\theta^{*} - \gamma_{\theta,k}(\nabla_{\theta} g(\theta^{k}) - \nabla_{\theta} g(\theta^*)) - \gamma_{\theta,k}v^k \|^2 \\
  & = \| \theta^{k}-\theta^{*} - \gamma_{\theta,k}(\nabla_{\theta} g(\theta^{k}) - \nabla_{\theta} g(\theta^*))\|^2 + \gamma_{\theta,k}^2 \|v^k \|^2 - 2 ( \theta^{k}-\theta^{*} - \gamma_{\theta,k}(\nabla_{\theta} g(\theta^{k}) - \nabla_{\theta} g(\theta^*)) )^T v^k.
\end{align*}
By taking conditional expectations and by recalling that $\mathbb{E}[v^k
\mid \mathcal{F}_k] = 0$, we obtain the following bound:
 \begin{align}  \label{eq:stoch_strongly_optim_exp_final_theta}
 \begin{aligned}
 \mathbb{E}[\|\theta^{k+1}-\theta^{*}\|^2 \mid \mathcal{F}_k] & \leq \| \theta^{k}-\theta^{*} - \gamma_{k,\theta}(\nabla_{\theta} g(\theta^{k}) - \nabla_{\theta} g(\theta^*))\|^2  + \gamma_{k,\theta}^2 \mathbb{E}[\| v^k \|^2\mid\mathcal{F}_k] \\
   & \leq q_{k,\theta}^2 \|\theta^{k}-\theta^{*}\|^2 + \gamma_{k,\theta}^2 \nu_{\theta}^2,
    \end{aligned}
\end{align}
where $q_{k,\theta} \triangleq \sqrt{1-2\gamma_{k,\theta} \mu_{\theta} +
	\gamma_{k,\theta}^2 C_{\theta}^2}$.
Next, by adding $\eqref{eq:stoch_strongly_optim_exp_final}$ and
\eqref{eq:stoch_strongly_optim_exp_final_theta} and by invoking
(A\ref{assump:stoch_steplength_strongly_optim}-1), we obtain the following
bound.
\begin{align*} 
  \notag& \quad\ \mathbb{E}[\|x^{k+1}-x^{*}\|^2 \mid \mathcal{F}_k]  +
  \mathbb{E}[ \|\theta^{k+1}-\theta^{*}\|^2\mid \mathcal{F}_k] \\
	 \notag  &\leq (1-\gamma_{k,x} \mu_x + \gamma_{k,x}^2 L_x^2) \|x^{k}-x^{*}\|^2   + (q_{k,\theta}^2 + \gamma_{k,x} L_{\theta}^2/\mu_{x} + \gamma_{k,x}^2 L_{\theta}^2 ) \|\theta^{k}-\theta^*\|^2 + \gamma_{k,x}^2  \nu_x^2  + \gamma_{k,\theta}^2 \nu_{\theta}^2\\
 	 \notag & = (1-\gamma_{k,x} \mu_x + \gamma_{k,x}^2 L_x^2)
	 \|x^{k}-x^{*}\|^2   + (1- \gamma_{k,x} L_{\theta}^2/\mu_{x} +
			 \gamma_{k,x}^2 (L_{\theta}^2+ L_{\theta}^4
				 C_{\theta}^2/(\mu_{x}^2\mu_{\theta}^2) ) )
	\|\theta^{k}-\theta^*\|^2  \\ &\quad\ + \gamma_{k,x}^2  \nu_x^2 + \gamma_{k,x}^2 \nu_{\theta}^2 L_{\theta}^4 /(\mu_{x}^2\mu_{\theta}^2)\\
     \notag & \leq (1-\alpha \gamma_{k,x} + \beta  \gamma_{k,x}^2 ) ( \|x^{k}-x^{*}\|^2 + \|\theta^{k}-\theta^*\|^2 ) + \delta \gamma_{k,x}^2,
\end{align*}
where $\alpha=\min\{\mu_x, L_{\theta}^2/\mu_{x}\}$, $\beta = \max\{L_x^2, L_{\theta}^2+ L_{\theta}^4 C_{\theta}^2/(\mu_{x}^2\mu_{\theta}^2)  \}$ and
$\delta = \nu_x^2 + \nu_{\theta}^2 L_{\theta}^4 /(\mu_{x}^2\mu_{\theta}^2)$.
 From
(A\ref{assump:stoch_steplength_strongly_optim}-1), we have that
   $\sum_{k=0}^{\infty} (\alpha \gamma_{k,x} - \beta  \gamma_{k,x}^2) =\infty, \quad
   \sum_{k=0}^{\infty} \delta\gamma_{k,x}^2  < \infty,$ and
$$\lim_{k\to \infty} \frac{\delta\gamma_{k,x}^2 }{\alpha \gamma_{k,x} - \beta  \gamma_{k,x}^2}  = 0.$$
Then, by invoking the super-martingale convergence theorem (Lemma
		\ref{lem:supermartingale}), we have that
$\|x^{k}-x^{*}\|^2+\|\theta^k-\theta^*\|^2\to 0$ $a.s.$ as $k\to\infty$, which implies that
$x^k \to x^*$  and $\theta^k \to \theta^*$ $a.s.$ as $k\to\infty$.
\end{proof}
\\

Next we weaken the strong convexity requirement on the function $f$
through the following assumption.
\begin{assumption}[A1-2] \label{assump:stoch_monotone_optim}
Suppose the following holds in addition to (A\ref{assump:stoch_strongly_optim}-1 (ii)) and (A\ref{assump:stoch_strongly_optim}-1 (iii)).
\begin{enumerate}
  \item[(i)] For every {$\theta \in \Theta$}, $f(x;\theta)$ is convex and continuously differentiable with Lipschitz continuous gradients in $x$ with  Lipschitz constant $L_{x}$.
\end{enumerate}
\end{assumption}
Furthermore, we make the following assumptions on the steplength sequences employed
in the algorithm.
\begin{assumption}[A2-2] \label{assump:stoch_steplength_monotone_optim}
Let $\{\gamma_{k,x}\}$, $\{\gamma_{k,\theta}\}$ and some constant $\tau\in(0,1)$ be chosen such that:
\begin{enumerate}
  \item[(i)] $\sum_{k=0}^{\infty}\gamma_{k,x}^{2-\tau} < \infty$  and $\sum_{k=0}^{\infty} \gamma_{k,\theta}^2 < \infty$,
  \item[(iii)] $\sum_{k=0}^{\infty}\gamma_{k,x} = \infty$ and $\sum_{k=0}^{\infty} \gamma_{k,\theta}=\infty$,
  \item[(iii)] $\beta_k = \frac{\gamma_{k,x}^{\tau}}{2\gamma_{k,\theta} \mu_{\theta} } \downarrow 0$ as $k\to \infty$.
\end{enumerate}
\end{assumption}
Proceeding as in the previous result, we present a convergence result
under these weakened conditions.\vspace{0.2in}
\begin{thm} [{\bf Almost-sure convergence under convexity of $f$}]\label{thm:stoch_monotone_optim}
Suppose  (A1-2), (A2-2) and (A\ref{assump:filtration})
	hold. Suppose
	$X$ is bounded and the solution set $X^*$ of
	\eqref{problem_stoch_optim_0} is nonempty. Let $\{x^k,\theta^k\}$ be computed via Algorithm \ref{alg:stoch_strongly_optim}.
Then, $\theta^{k} \to \theta^{*}$ $a.s.$ as $k\rightarrow\infty$, and $x^{k}$
converges to a random point in $X^*$ $a.s.$ as $k\rightarrow\infty$,
		  where $\theta^*$ denotes the unique solution
of \eqref{problem_stoch_optim_theta} and $X^*$ denotes the solution set
of \eqref{problem_stoch_optim_0}.
\end{thm}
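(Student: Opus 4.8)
The plan is to decouple the two iterates, exploiting that the \textrm{(Learn$_k$)} recursion is autonomous (it never sees $x^k$) and is driven by the strongly convex $g$, so the analysis of $\theta^k$ is inherited almost verbatim from Proposition~\ref{thm:stoch_strongly_optim}. Concretely, the bound \eqref{eq:stoch_strongly_optim_exp_final_theta} still holds; rewriting it as $\mathbb{E}[\|\theta^{k+1}-\theta^*\|^2\mid\mathcal{F}_k]\le(1+\gamma_{k,\theta}^2 C_\theta^2)\|\theta^k-\theta^*\|^2-2\gamma_{k,\theta}\mu_\theta\|\theta^k-\theta^*\|^2+\gamma_{k,\theta}^2\nu_\theta^2$, I would invoke Lemma~\ref{lem:supermartingale} (using $\sum_k\gamma_{k,\theta}=\infty$ and $\sum_k\gamma_{k,\theta}^2<\infty$ from (A2-2)) to get $\theta^k\to\theta^*$ a.s. Applying instead Lemma~\ref{lem:supermartingale2} to the same inequality yields the auxiliary summability statement $\sum_k\gamma_{k,\theta}\|\theta^k-\theta^*\|^2<\infty$ a.s., which is precisely what will later tame the coupling between the two schemes.

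For the $x$-iterate, since $f(\cdot;\theta^*)$ is only convex, $X^*$ need not be a singleton, so I would fix an arbitrary $\hat x\in X^*$ and track $\|x^k-\hat x\|^2$. Starting from $\hat x=\Pi_X(\hat x-\gamma_{k,x}\nabla_x f(\hat x;\theta^*))$ and nonexpansivity of $\Pi_X$, expanding $\|x^{k+1}-\hat x\|^2$ and taking $\mathbb{E}[\cdot\mid\mathcal{F}_k]$ annihilates the $w^k$ cross term and leaves the key inner product $-2\gamma_{k,x}(x^k-\hat x)^T(\nabla_x f(x^k;\theta^k)-\nabla_x f(\hat x;\theta^*))$. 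I would split the gradient difference through $\nabla_x f(x^k;\theta^*)$: the ``same-$\theta^*$'' piece is bounded below by combining convexity of $f(\cdot;\theta^*)$ with the optimality inequality $\nabla_x f(\hat x;\theta^*)^T(x^k-\hat x)\ge0$, giving $(x^k-\hat x)^T(\nabla_x f(x^k;\theta^*)-\nabla_x f(\hat x;\theta^*))\ge f(x^k;\theta^*)-f^*\ge0$ with $f^*\triangleq f(\hat x;\theta^*)$; the ``$\theta$-mismatch'' piece is controlled by $L_\theta\|\theta^k-\theta^*\|$ using (A1-2)(ii) and the boundedness of $X$. The residual quadratic term is handled via $\|\nabla_x f(x^k;\theta^k)-\nabla_x f(\hat x;\theta^*)\|\le L_x\|x^k-\hat x\|+L_\theta\|\theta^k-\theta^*\|$, boundedness of $X$, and $\sum_k\gamma_{k,x}^2<\infty$ (which follows from (A2-2)(i) since $\tau\in(0,1)$ forces $\gamma_{k,x}^2\le\gamma_{k,x}^{2-\tau}$ eventually).

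The delicate point is to render the coupling term summable. I would use Young's inequality in the calibrated form $2\gamma_{k,x}DL_\theta\|\theta^k-\theta^*\|\le\gamma_{k,x}^{2-\tau}(DL_\theta)^2+\gamma_{k,x}^{\tau}\|\theta^k-\theta^*\|^2$, where $D$ denotes the finite diameter of $X$. The first summand is summable by (A2-2)(i); for the second I would invoke (A2-2)(iii), namely $\gamma_{k,x}^\tau=2\mu_\theta\beta_k\gamma_{k,\theta}$ with $\beta_k\downarrow0$ (hence $\beta_k\le\beta_0<\infty$), so that $\sum_k\gamma_{k,x}^\tau\|\theta^k-\theta^*\|^2\le2\mu_\theta\beta_0\sum_k\gamma_{k,\theta}\|\theta^k-\theta^*\|^2<\infty$ a.s. by the first paragraph. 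Collecting terms I arrive at $\mathbb{E}[\|x^{k+1}-\hat x\|^2\mid\mathcal{F}_k]\le\|x^k-\hat x\|^2-2\gamma_{k,x}(f(x^k;\theta^*)-f^*)+\beta_k'$ with $\sum_k\beta_k'<\infty$ a.s. Lemma~\ref{lem:supermartingale2} then yields that $\{\|x^k-\hat x\|^2\}$ converges a.s. and that $\sum_k\gamma_{k,x}(f(x^k;\theta^*)-f^*)<\infty$ a.s.

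Finally I would convert these two facts into convergence to a point of $X^*$. Since $\sum_k\gamma_{k,x}=\infty$ while the weighted sum of nonnegative gaps is finite, $\liminf_k(f(x^k;\theta^*)-f^*)=0$ a.s.; compactness of $X$ furnishes a subsequence $x^{k_j}\to\bar x\in X$ with $f(\bar x;\theta^*)=f^*$, i.e. $\bar x\in X^*$. To upgrade this to convergence of the full sequence, I would run the recursion of the previous paragraph simultaneously over a countable dense subset $\{\hat x_i\}\subset X^*$, obtaining on a single probability-one event that $\|x^k-\hat x_i\|$ converges for every $i$. Then any two cluster points $\bar x,\bar x'$ satisfy $\|\bar x-\hat x_i\|=\|\bar x'-\hat x_i\|$ for all $i$, and choosing $\hat x_i\to\bar x$ forces $\bar x'=\bar x$; hence $x^k\to\bar x\in X^*$ a.s. I expect the main obstacle to be exactly this last step together with the coupling bookkeeping: the a.s. convergence of $\|x^k-\hat x\|^2$ holds only off an $\hat x$-dependent null set, so one must first pass to a single null set (via the dense subset and the $\theta$-events) before identifying the \emph{random} limit $\bar x$, ensuring that the summability estimates of the third paragraph survive intersecting countably many such events.
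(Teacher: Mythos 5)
Your proof breaks at the key inequality for the ``same-$\theta^*$'' piece: the claim $(x^k-\hat x)^T(\nabla_x f(x^k;\theta^*)-\nabla_x f(\hat x;\theta^*))\ge f(x^k;\theta^*)-f^*$ is false for constrained problems. Convexity gives $(x^k-\hat x)^T\nabla_x f(x^k;\theta^*)\ge f(x^k;\theta^*)-f^*$, but the term you must then subtract, $(x^k-\hat x)^T\nabla_x f(\hat x;\theta^*)$, is \emph{nonnegative} by optimality of $\hat x$ --- the optimality inequality points the wrong way for your purposes. Concretely, take $f(x)=x^2$ on $X=[1,2]$, $\hat x=1$, $x^k=2$: your left side is $(1)(4-2)=2$ while the right side is $4-1=3$. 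What convexity and optimality actually yield is the lower bound $f(x^k;\theta^*)-f^*-(x^k-\hat x)^T\nabla_x f(\hat x;\theta^*)$, where the subtracted quantity is nonnegative, of order one, and (after multiplication by $\gamma_{k,x}$) not summable; the only clean alternative is the monotonicity bound $\ge 0$. Either way, the recursion you ``collect'' at the end of your third paragraph, containing $-2\gamma_{k,x}(f(x^k;\theta^*)-f^*)$, does not follow, and without the summability $\sum_k\gamma_{k,x}(f(x^k;\theta^*)-f^*)<\infty$ you cannot identify cluster points as members of $X^*$: convergence of $\|x^k-\hat x\|$ alone says nothing about optimality of the limit.

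The source of the problem is your fixed-point characterization $\hat x=\Pi_X(\hat x-\gamma_{k,x}\nabla_x f(\hat x;\theta^*))$; once nonexpansivity is applied, the gradient at $\hat x$ sits inside the cross term with a sign that can only be handled by monotonicity, which destroys the function gap. The paper instead compares $x^{k+1}$ with the trivial identity $\hat x=\Pi_X(\hat x)$, so the cross term is $-2\gamma_{k,x}(x^k-\hat x)^T\nabla_x f(x^k;\theta^k)$; splitting through $\nabla_x f(x^k;\theta^*)$ and applying the gradient inequality to the whole term $-2\gamma_{k,x}(x^k-\hat x)^T\nabla_x f(x^k;\theta^*)\le -2\gamma_{k,x}(f(x^k;\theta^*)-f^*)$ produces the gap with the correct sign. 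The price is an extra $\gamma_{k,x}^2\|\nabla_x f(x^k;\theta^*)\|^2$ term, tamed by writing $\nabla_x f(x^k;\theta^*)=(\nabla_x f(x^k;\theta^*)-\nabla_x f(\hat x;\theta^*))+\nabla_x f(\hat x;\theta^*)$, Lipschitz continuity, and $\sum_k\gamma_{k,x}^2<\infty$. With this repair the rest of your architecture survives, and two of your choices are legitimate alternatives to the paper's: extracting $\sum_k\gamma_{k,\theta}\|\theta^k-\theta^*\|^2<\infty$ a.s.\ from Lemma \ref{lem:supermartingale2} and using $\gamma_{k,x}^\tau=2\mu_\theta\beta_k\gamma_{k,\theta}\le 2\mu_\theta\beta_0\gamma_{k,\theta}$ to make the coupling term summable (the paper instead carries the weighted Lyapunov function $\|x^k-x^*\|^2+\beta_k\|\theta^k-\theta^*\|^2$ and cancels the coupling inside its Term 4), and your countable-dense-subset argument for upgrading subsequential to full-sequence convergence, which makes explicit a null-set issue that the paper's final sentence glosses over.
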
\vspace{0.2in}

\begin{proof}
By the nonexpansivity of the Euclidean projector, we have for any $x^*\in X^*$ that
 \begin{align*}
\notag     \|x^{k+1}-x^{*}\|^2 & =\|\Pi_{X}(x^{k}-\gamma_{k,x}(\nabla_x f(x^k;\theta^{k}) + w^k) )  -\Pi_{X}(x^{*})\|^2 \\
&\leq\|(x^{k}-x^{*})-\gamma_{k,x}\nabla_x f(x^{k};\theta^{k})
-\gamma_{k,x} w^k \|^2.
\end{align*}
By adding and subtracting $\gamma_{k,x} \nabla_x f(x^*,\theta^k)$, this expression can be further expanded as follows:
\begin{align*}
\notag & \quad\ \|(x^{k}-x^{*})-\gamma_{k,x}\nabla_x f(x^{k};\theta^{*})-\gamma_{k,x}(\nabla_x f(x^{k};\theta^{k})-\nabla_x f(x^{k};\theta^{*}))  -\gamma_{k,x} w^k \|^2 \\
\notag  & = \|(x^{k}-x^{*})-\gamma_{k,x}\nabla_x f(x^{k};\theta^{*})\|^2
+ \gamma_{k,x}^2 \|\nabla_x f(x^{k};\theta^{k})-\nabla_x f(x^{k};\theta^{*})\|^2 + \gamma_{k,x}^2 \| w^k \|^2 \\
\notag        & \quad\ - 2 \gamma_{k,x} [ (x^{k}-x^{*})-\gamma_{k,x}\nabla_x f(x^{k};\theta^{*}) ]^T  (\nabla_x f(x^{k};\theta^{k})-\nabla_x f(x^{k};\theta^{*})) \\
 & \quad\ - 2 \gamma_{k,x} [ (x^{k}-x^{*})-\gamma_{k,x}\nabla_x f(x^{k};\theta^{*}) ]^T w^k + 2 \gamma_{k,x}^2 (\nabla_x f(x^{k};\theta^{k})-\nabla_x f(x^{k};\theta^{*}))^T w^k.
\end{align*}
Noting that $\mathbb{E}[w^k \mid \mathcal{F}_k] = 0$, we have
 \begin{align}  \label{eq:stoch_monotone_optim_exp}
  \mathbb{E}[\|x^{k+1}-x^{*}\|^2 \mid \mathcal{F}_k]
   & \leq \textbf{Term 1} + \textbf{Term 2} +\textbf{Term 3}  + \gamma_{k,x}^2 \mathbb{E}[\| w^k \|^2\mid\mathcal{F}_k],
\end{align}
where {\bf Terms 1 -- 3} are defined as follows:
\begin{align*}
\mbox{\bf Term 1} & \triangleq \|(x^{k}-x^{*})-\gamma_{k,x}\nabla_x f(x^{k};\theta^{*})\|^2,\\
\mbox{\bf Term 2}  &\triangleq\gamma_{k,x}^2 \|\nabla_x f(x^{k};\theta^{k})-\nabla_x f(x^{k};\theta^{*})\|^2, \\
\mbox{ and \bf Term 3} & \triangleq- 2 \gamma_{k,x} [ (x^{k}-x^{*})-\gamma_{k,x}\nabla_x f(x^{k};\theta^{*}) ]^T  (\nabla_x f(x^{k};\theta^{k})-\nabla_x f(x^{k};\theta^{*})).
\end{align*}
By invoking the convexity of $f(x;\theta)$ in $x$ and the gradient
inequality
(see A1-2), we have that
\begin{align*}\notag \  \textbf{Term 1} &  = \|x^{k}-x^{*}\|^2 + \gamma_{k,x}^2 \|\nabla_x f(x^{k};\theta^{*})\|^2 - 2\gamma_{k,x}(x^{k}-x^{*})^T \nabla_x f(x^{k};\theta^{*}) \\
& \leq \|x^{k}-x^{*}\|^2 + \gamma_{k,x}^2 \|\nabla_x f(x^{k};\theta^{*})\|^2 - 2 \gamma_{k,x} ( f(x^{k};\theta^{*}) - f(x^{*};\theta^{*}) ) \\
\notag & \leq \|x^{k}-x^{*}\|^2 + 2\gamma_{k,x}^2 \|\nabla_x f(x^{k};\theta^{*}) - \nabla_x f(x^{*};\theta^{*})\|^2 +
   2\gamma_{k,x}^2 \| \nabla_x f(x^{*};\theta^{*})\|^2  - 2 \gamma_{k,x} ( f(x^{k};\theta^{*}) - f(x^{*};\theta^{*}) ),
\end{align*}
where the last inequality follows from the identity $\|(a-b)+b\|^2 \leq
2\|a-b\|^2 + 2 \|b\|^2.$ From the Lipschitz continuity of $\nabla_x
f(x;\theta)$ in $x$, the right hand side can be
bounded as follows:
\begin{align} \label{eq:stoch_monotone_optim_exp_term1}
 \notag    \|x^{k}-x^{*}\|^2 & + 2\gamma_{k,x}^2 \|\nabla_x f(x^{k};\theta^{*}) - \nabla_x f(x^{*};\theta^{*})\|^2 +
   2\gamma_{k,x}^2 \| \nabla_x f(x^{*};\theta^{*})\|^2 - 2 \gamma_{k,x} ( f(x^{k};\theta^{*}) - f(x^{*};\theta^{*}) ) \\
  & \leq (1+2\gamma_{k,x}^2 L_x^2)\|x^{k}-x^{*}\|^2 + 2\gamma_{k,x}^2 \| \nabla_x f(x^{*};\theta^{*})\|^2  - 2 \gamma_{k,x} ( f(x^{k};\theta^{*}) - f(x^{*};\theta^{*}) ).
\end{align}
By the Lipschitz continuity of $\nabla_x f(x;\theta)$ in $\theta$ (A1-2),
\begin{align} \label{eq:stoch_monotone_optim_exp_term2}
\textbf{Term 2} \leq  \gamma_{k,x}^2 L_{\theta}^2 \|\theta^{k}-\theta^{*}\|^2.
\end{align}
By adding and subtracting $\nabla_x f(x^{*};\theta^{*})$, and by
invoking the Lipschitz continuity of $\nabla_x f(x;\theta)$ in $x$ (A1-2) and the triangle
   inequality, we may derive a bound for {\bf Term 3} as follows:
 \begin{align*}
  \notag\textbf{Term 3}  & \leq 2\gamma_{k,x}  \| (x^{k}-x^{*})-\gamma_{k,x} \nabla_x f(x^{k};\theta^{*}) \| \| \nabla_x f(x^{k};\theta^{k})-\nabla_x f(x^{k};\theta^{*}) \| \\
 \notag &   \leq  2\gamma_{k,x}\| (x^{k}-x^{*})-\gamma_{k,x} (\nabla_x f(x^{k};\theta^{*})-\nabla_x f(x^{*};\theta^{*})) -\gamma_{k,x}\nabla_x f(x^{*};\theta^{*})\| L_{\theta} \|\theta^{k}-\theta^{*}\|  \\
 \notag & \leq 2 \gamma_{k,x}\left( (1+\gamma_{k,x}L_x)\|x^{k}-x^{*}\| +
		 \gamma_{k,x}\|\nabla_x f(x^{*};\theta^{*})\|\right) L_{\theta}  \|\theta^{k}-\theta^{*}\| \\
  & = 2 \gamma_{k,x}L_{\theta} \|x^{k}-x^{*}\| \|\theta^{k}-\theta^{*}\| +   2 \gamma_{k,x}^2L_{\theta}L_x\|x^{k}-x^{*}\| \|\theta^{k}-\theta^{*}\|+ 2 \gamma_{k,x}^2 L_{\theta} \|\nabla_x f(x^{*};\theta^{*})\| \|\theta^{k}-\theta^{*}\|.
 \end{align*}
 By using the fact that $2ab\leq a^2+b^2$, we have further that
  \begin{align}         \label{eq:stoch_monotone_optim_exp_term3}
   \begin{aligned}
  \textbf{Term 3}   & \leq  \gamma_{k,x}^{2-\tau}L_{\theta}^2 \|x^{k}-x^{*}\|^2 +  \gamma_{k,x}^{\tau}\|\theta^{k}-\theta^{*}\|^2 +   \gamma_{k,x}^2L_{\theta}L_x\|x^{k}-x^{*}\|^2 \\
   & +  \gamma_{k,x}^2L_{\theta}L_x\|\theta^{k}-\theta^{*}\|^2 +  \gamma_{k,x}^2 L_{\theta}^2   \|\theta^{k}-\theta^{*}\|^2+ \gamma_{k,x}^2 \|\nabla_x f(x^{*};\theta^{*})\|^2,
   \end{aligned}
\end{align}
where $\tau\in(0,1)$ is chosen to satisfy (A2-2).
Combining \eqref{eq:stoch_monotone_optim_exp},
		  \eqref{eq:stoch_monotone_optim_exp_term1},
		  \eqref{eq:stoch_monotone_optim_exp_term2} and
		  \eqref{eq:stoch_monotone_optim_exp_term3}, we obtain the
		  following bound on the conditional error.
\begin{align} \label{eq:stoch_monotone_optim_exp_final}
\notag \mathbb{E}[\|x^{k+1}-x^{*}\|^2\mid\mathcal{F}_k]
       & \leq (1+ \gamma_{k,x}^{2-\tau}L_{\theta}^2 +
		\gamma_{k,x}^2 (2L_x^2+L_{\theta}L_x) )\|x^{k}-x^{*}\|^2
+
      ( \gamma_{k,x}^{\tau} + \gamma_{k,x}^2 (2L_{\theta}^2+L_{\theta}L_x)) \|\theta^{k}-\theta^{*}\|^2 \\
      & + 3\gamma_{k,x}^2 \| \nabla_x f(x^{*};\theta^{*})\|^2 - 2 \gamma_{k,x} ( f(x^{k};\theta^{*}) - f(x^{*};\theta^{*}) ).
\end{align}
From \eqref{eq:stoch_strongly_optim_exp_final_theta}, we have that
\begin{align}  \label{eq:stoch_monotone_optim_exp_final_theta}
   \mathbb{E}[\|\theta^{k+1}-\theta^{*}\|^2 \mid \mathcal{F}_k]
   & \leq  q_{k,\theta}^2 \|\theta^{k}-\theta^{*}\|^2 + \gamma_{k,\theta}^2 \nu_{\theta}^2,
\end{align}
where $q_{k,\theta} \triangleq \sqrt{1-2\gamma_{k,\theta} \mu_{\theta} + \gamma_{k,\theta}^2 C_{\theta}^2}$.
Choose $\beta_k = \frac{\gamma_{k,x}^{\tau}}{2\gamma_{k,\theta} \mu_{\theta} }$ by (A2-2). Note that by assumption $\beta_{k+1}\leq \beta_k$.
By multiplying
the left hand side of \eqref{eq:stoch_monotone_optim_exp_final_theta} by $\beta_{k+1}$ and adding to the left hand side of $\eqref{eq:stoch_monotone_optim_exp_final}$,
we get
\begin{align}
   \label{eq:stoch_monotone_rand_exp_x_theta}
   & \mathbb{E}[\|x^{k+1}-x^*\|^2\mid\mathcal{F}_k] + \beta_{k+1} \mathbb{E}[\|\theta^{k+1}-\theta^{*}\|^2 \mid\mathcal{F}_k]
    \leq \mathbb{E}[\|x^{k+1}-x^*\|^2\mid\mathcal{F}_k] + \beta_{k}
   \mathbb{E}[\|\theta^{k+1}-\theta^{*}\|^2 \mid\mathcal{F}_k]    \\
         \notag  & \leq (1+ \gamma_{k,x}^{2-\tau}L_{\theta}^2 + \gamma_{k,x}^2 (2L_x^2+L_{\theta}L_x) )\|x^{k}-x^{*}\|^2   +   (\beta_{k}q_{k,\theta}^2+ \gamma_{k,x}^{\tau} + \gamma_{k,x}^2 (2L_{\theta}^2+L_{\theta}L_x)) \|\theta^{k}-\theta^{*}\|^2 \\
       & \notag + 3\gamma_{k,x}^2 \| \nabla_x f(x^{*};\theta^{*})\|^2 + \beta_{k}\gamma_{k,\theta}^2 \nu_{\theta}^2 - 2 \gamma_{k,x} ( f(x^{k};\theta^{*}) - f(x^{*};\theta^{*}) )\\
          \notag   & \leq (1+ \gamma_{k,x}^{2-\tau}L_{\theta}^2 + \gamma_{k,x}^2 (2L_x^2+L_{\theta}L_x) )\|x^{k}-x^{*}\|^2  +   \underbrace{\frac{\beta_{k}q_{k,\theta}^2+ \gamma_{k,x}^{\tau} + \gamma_{k,x}^2 (2L_{\theta}^2+L_{\theta}L_x)}{\beta_{k}}}_{\textbf{Term 4}} \cdot \beta_{k} \|\theta^{k}-\theta^{*}\|^2 \\
     \notag &  + 3\gamma_{k,x}^2 \| \nabla_x f(x^{*};\theta^{*})\|^2 + \beta_{k}\gamma_{k,\theta}^2 \nu_{\theta}^2 - 2 \gamma_{k,x} ( f(x^{k};\theta^{*}) - f(x^{*};\theta^{*}) ).
 	\end{align}
Term 4 on the right hand side of  \eqref{eq:stoch_monotone_rand_exp_x_theta} can be further expanded as
\begin{align}
\begin{aligned}  \label{eq:stoch_monotone_rand_exp_x_theta4}
  \frac{\beta_k q_{k,\theta}^2 + \gamma_{k,x}^{\tau} +\gamma_{k,x}^2 (2L_{\theta}^2+L_{\theta}L_x)}{\beta_k}
  &  = q_{k,\theta}^2 +  \frac{\gamma_{k,x}^{\tau} +\gamma_{k,x}^2 (2L_{\theta}^2+L_{\theta}L_x)}{\beta_k} \\
 & = 1-2\gamma_{k,\theta} \mu_{\theta}  + \gamma_{k,\theta}^2 C_{\theta}^2 + \frac{\gamma_{k,x}^{\tau}}{\beta_k} + \frac{\gamma_{k,x}^2 (2L_{\theta}^2+L_{\theta}L_x)}{\beta_k} \\
     & = 1 + \gamma_{k,\theta}^2 C_{\theta}^2 + 2\gamma_{k,\theta} \gamma_{k,x}^{2-\tau} \mu_{\theta}  (2L_{\theta}^2+L_{\theta}L_x).
     \end{aligned}
\end{align}
Combining \eqref{eq:stoch_monotone_rand_exp_x_theta} and \eqref{eq:stoch_monotone_rand_exp_x_theta4}, we get
\begin{align*}
& \quad \mathbb{E}[\|x^{k+1}-x^*\|^2\mid\mathcal{F}_k] + \beta_{k+1}
\mathbb{E}[\|\theta^{k+1}-\theta^{*}\|^2 \mid\mathcal{F}_k]\\
& \leq (1+ \gamma_{k,x}^{2-\tau}L_{\theta}^2 + \gamma_{k,x}^2 (2L_x^2+L_{\theta}L_x) )\|x^{k}-x^{*}\|^2   +   ( 1 + \gamma_{k,\theta}^2 C_{\theta}^2 + 2\gamma_{k,\theta} \gamma_{k,x}^{2-\tau} \mu_{\theta}  (2L_{\theta}^2+L_{\theta}L_x)  ) \beta_{k} \|\theta^{k}-\theta^{*}\|^2 \\
  \notag    & \quad\ + 3\gamma_{k,x}^2 \| \nabla_x f(x^{*};\theta^{*})\|^2 + \beta_{k}\gamma_{k,\theta}^2 \nu_{\theta}^2 - 2 \gamma_{k,x} ( f(x^{k};\theta^{*}) - f(x^{*};\theta^{*}) )\\
   \notag&\leq ( 1 + \gamma_{k,\theta}^2 C_{\theta}^2 +
		   2\gamma_{k,\theta} \gamma_{k,x}^{2-\tau} \mu_{\theta}
		   (2L_{\theta}^2+L_{\theta}L_x)  ) (\|x^{k}-x^{*}\|^2 +
			   \beta_{k} \|\theta^{k}-\theta^{*}\|^2 )    + (\gamma_{k,x}^{2-\tau}L_{\theta}^2 + \gamma_{k,x}^2 (2L_x^2+L_{\theta}L_x) )\|x^{k}-x^{*}\|^2 \\
       & \quad\ + 3\gamma_{k,x}^2 \| \nabla_x f(x^{*};\theta^{*})\|^2 + \beta_{k}\gamma_{k,\theta}^2 \nu_{\theta}^2 - 2 \gamma_{k,x} ( f(x^{k};\theta^{*}) - f(x^{*};\theta^{*}) ).
\end{align*}
We define the following:
\begin{align*}
  u_k&\triangleq\gamma_{k,\theta}^2 C_{\theta}^2 + 2\gamma_{k,\theta} \gamma_{k,x}^{2-\tau} \mu_{\theta}  (2L_{\theta}^2+L_{\theta}L_x),  \sigma_k \triangleq 2 \gamma_{k,x} ( f(x^{k};\theta^{*}) - f(x^{*};\theta^{*}) ),\\
  \mbox{ and } \rho_k& \triangleq (\gamma_{k,x}^{2-\tau}L_{\theta}^2 +
		  \gamma_{k,x}^2 (2L_x^2+L_{\theta}L_x) )\|x^{k}-x^{*}\|^2 +
  3\gamma_{k,x}^2 \| \nabla_x f(x^{*};\theta^{*})\|^2 +
  \beta_{k}\gamma_{k,\theta}^2 \nu_{\theta}^2.
\end{align*}
Then, we have
\begin{align*}
 \notag  & \quad \ \mathbb{E}[\|x^{k+1}-x^*\|^2\mid\mathcal{F}_k] + \beta_{k+1} \mathbb{E}[\|\theta^{k+1}-\theta^{*}\|^2 \mid\mathcal{F}_k]  \leq (1+u_k)(\|x^{k}-x^{*}\|^2 + \beta_{k} \|\theta^{k}-\theta^{*}\|^2 )  + \rho_k - \sigma_k.
\end{align*}
By boundedness of $X$ and (A2-2), we have that
$\sum_{k=0}^{\infty}u_k<\infty$ and $\sum_{k=0}^{\infty}\rho_k<\infty$.
So, by Lemma \ref{lem:supermartingale2} we get that there exists a random variable $V$ such that
    $\|x^{k}-x^{*}\|^2 + \beta_{k} \|\theta^{k}-\theta^{*}\|^2  \to V$
	in an almost sure sense as $k\to\infty$ and
	$\sum_{k=0}^{\infty}\sigma_k=\sum_{k=0}^{\infty} 2\gamma_{k,x}(
			f(x^{k};\theta^{*}) - f(x^{*};\theta^{*}) ) < \infty.$

By (A2-2), Lemma \ref{lem:supermartingale} and \eqref{eq:stoch_monotone_optim_exp_final_theta}, we can get that $\|\theta^{k}-\theta^{*}\|  \to 0$  $a.s.$ as $k\to\infty$. Thus, it follows that $\|x^{k}-x^{*}\|  \to V$  $a.s.$  $\textrm{as } k\to\infty$.
Since $\sum_{k=0}^{\infty}\gamma_{k,x} = \infty$, we get $\liminf_{k\to\infty} f(x^{k};\theta^{*}) = f(x^{*};\theta^{*})$ $a.s.$ $\textrm{as } k\to\infty$.
Since the set $X$ is closed, all accumulation points of $\{x^k\}$ lie in $X$. Furthermore, since $f(x^{k};\theta^{*})\to f(x^{*};\theta^{*})$ along a subsequence $a.s.$,  by continuity of $f$ it follows that $\{x^k\}$ has a subsequence converging   $a.s.$ to some point in $X$, say $\tilde{x}$, which satisfies $f(\tilde{x};\theta^{*})=f(x^{*};\theta^{*})$. That means $\tilde{x}$ is some random point in $X^*$.  Moreover, since $\|x^{k}-x^{*}\|$ is convergent for any $x^*\in X^*$ $a.s.$, the entire sequence $\{x^k\}$ converges to some random point in $X^*$ $a.s.$
\end{proof}

\subsection{Diminishing and constant steplength rate analysis}
\label{sec:II.III}
While the previous section focused on the almost sure convergence of the
prescribed learning and computational schemes, a natural question is
whether  one can develop rate statements. We begin with an
examination of the global rate of convergence and show that
$\Oscr(1/K)$ rate estimate  is derived for an upper bound on the
mean-squared error in the solution $x_K$ when $f(\bullet;\theta^*)$ is strongly
convex in $(\bullet)$ and $K$ represents the number of steps, consistent
with the result obtained for stochastic approximation
({cf.~\cite{Nemirovski09,Shapiro09lecturesSA}}). In addition, it is seen that when the
function loses strong convexity, an analogous rate estimate is available by
using averaging, akin to an approach first employed in
\cite{Polyak92acceleration}, where longer stepsizes were suggested with
consequent averaging of the obtained iterates.\\

\begin{prop}[{\bf Rate estimates for strongly convex $f$}] \label{thm:stoch_strongly_optim_error}
Suppose (A\ref{assump:stoch_strongly_optim}-1) and
(A\ref{assump:filtration}) hold. Suppose $\gamma_{x,k}=\lambda_{x}/k$
and $\gamma_{\theta,k}=\lambda_{\theta}/k$ with $\lambda_{x}>1/\mu_{x}$
and $\lambda_{\theta}>1/(2\mu_{\theta})$.  Let
$\mathbb{E}[\|\nabla_x f(x^{k};\theta^{k})+ w^k \|^2]\leq M^2$  and
$\mathbb{E}[\|\nabla_{\theta} g(\theta^{k})+ v^k \|^2]\leq M_{\theta}^2$  for all
$x^k\in X$ and $\theta^k\in\Theta$.
Let $\{x^k,\theta^k\}$
be computed via Algorithm \ref{alg:stoch_strongly_optim}.  Then, the
following hold after $K$ iterations:
\begin{align*}
  \mathbb{E}[\|\theta^{K} - \theta^{*} \|^2 ] \leq
  \frac{Q_{\theta}(\lambda_{\theta})}{K} \mbox{ and }
   \mathbb{E}[\|x^{K} - x^{*} \|^2 ] \leq \frac{Q_{x}(\lambda_{x})}{K},
\end{align*}
\begin{align*}
 {\it \mbox{where }} Q_{\theta}(\lambda_{\theta})&\triangleq \max\left\{ \lambda_{\theta}^2 M_{\theta}^2 (2\mu_{\theta}\lambda_{\theta} - 1)^{-1}, \mathbb{E}[\|\theta^{1} - \theta^{*} \|^2 ] \right\},\\
  Q_{x}(\lambda_{x})&\triangleq \max\left\{ \lambda_{x}^2 \widetilde{M}^2
(\mu_{x}\lambda_{x} - 1)^{-1}, \mathbb{E}[\|x^{1} - x^{*} \|^2 ]
\right\}, {\it \mbox{ and }} \widetilde{M} \triangleq \sqrt{ M^2 + \frac{L_{\theta}^2Q_{\theta}(\lambda_{\theta})}{\mu_x \lambda_{x}} }.
\end{align*}
\end{prop}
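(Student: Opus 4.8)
The plan is to exploit that the learning recursion (Learn$_k$) is autonomous---its right-hand side does not involve $x^k$---so I would first bound $\mathbb{E}[\|\theta^k-\theta^*\|^2]$ and then feed that estimate into the analysis of (Opt$_k$). Both bounds will follow from a single deterministic device: if a nonnegative sequence $\{a_k\}$ obeys $a_{k+1}\le(1-c/k)a_k+d/k^2$ for all $k\ge1$ with $c>1$ and $d\ge0$, then $a_k\le Q/k$ for $Q=\max\{d/(c-1),a_1\}$. I would prove this by induction on $k$: once the factor $1-c/k$ is nonnegative, the hypothesis $a_k\le Q/k$ gives $a_{k+1}\le Q/k-(cQ-d)/k^2$, and since $(c-1)Q\ge d$ forces $cQ-d\ge Q>Qk/(k+1)$, the right-hand side is bounded by $Q/(k+1)$; the finitely many initial indices are absorbed into the constant through the $a_1$ term.

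For the learning iterate I would expand $\|\theta^{k+1}-\theta^*\|^2$ from (Learn$_k$) using nonexpansivity of $\Pi_\Theta$ and $\mathbb{E}[v^k\mid\mathcal{F}_k]=0$ from (A\ref{assump:filtration}), together with the hypothesized second-moment bound $\mathbb{E}[\|\nabla_\theta g(\theta^k)+v^k\|^2]\le M_\theta^2$, to reach $\mathbb{E}[\|\theta^{k+1}-\theta^*\|^2\mid\mathcal{F}_k]\le\|\theta^k-\theta^*\|^2-2\gamma_{k,\theta}(\theta^k-\theta^*)^T\nabla_\theta g(\theta^k)+\gamma_{k,\theta}^2 M_\theta^2$. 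Strong monotonicity of $\nabla_\theta g$ (from the strong convexity in (A\ref{assump:stoch_strongly_optim}-1)) combined with the optimality inequality $(\theta^k-\theta^*)^T\nabla_\theta g(\theta^*)\ge0$ gives $(\theta^k-\theta^*)^T\nabla_\theta g(\theta^k)\ge\mu_\theta\|\theta^k-\theta^*\|^2$. Taking full expectations and setting $a_k=\mathbb{E}[\|\theta^k-\theta^*\|^2]$ yields $a_{k+1}\le(1-2\mu_\theta\lambda_\theta/k)a_k+\lambda_\theta^2 M_\theta^2/k^2$; since $\lambda_\theta>1/(2\mu_\theta)$ makes $c=2\mu_\theta\lambda_\theta>1$, the device delivers exactly the claimed $Q_\theta(\lambda_\theta)/K$ bound.

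For the computational iterate I would run the same expansion on (Opt$_k$), but the gradient is evaluated at the running $\theta^k$ rather than at $\theta^*$. After using $\mathbb{E}[w^k\mid\mathcal{F}_k]=0$ and $\mathbb{E}[\|\nabla_x f(x^k;\theta^k)+w^k\|^2]\le M^2$, I would split $(x^k-x^*)^T\nabla_x f(x^k;\theta^k)$ into its value at $\theta^*$, controlled below by $\mu_x\|x^k-x^*\|^2$ via strong convexity of $f(\cdot;\theta^*)$ and optimality of $x^*$, plus a perturbation bounded by $\|x^k-x^*\|L_\theta\|\theta^k-\theta^*\|$ using Cauchy--Schwarz and the Lipschitz constant $L_\theta$. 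The decisive step is to absorb the cross term by Young's inequality as $2\gamma_{k,x}L_\theta\|x^k-x^*\|\|\theta^k-\theta^*\|\le\gamma_{k,x}\mu_x\|x^k-x^*\|^2+\gamma_{k,x}(L_\theta^2/\mu_x)\|\theta^k-\theta^*\|^2$, leaving the net contraction factor $(1-\gamma_{k,x}\mu_x)$---this is precisely where $\lambda_x>1/\mu_x$ enters, ensuring $c=\mu_x\lambda_x>1$. Taking full expectations and inserting $\mathbb{E}[\|\theta^k-\theta^*\|^2]\le Q_\theta(\lambda_\theta)/k$ collapses the residual terms into $\lambda_x^2\widetilde{M}^2/k^2$ with $\widetilde{M}^2=M^2+L_\theta^2 Q_\theta(\lambda_\theta)/(\mu_x\lambda_x)$, so the deterministic device gives the stated $Q_x(\lambda_x)/K$ bound.

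The main obstacle is the one-directional coupling: the $x$-recursion is driven by the learning error, and the whole argument hinges on calibrating the Young split so that (i) exactly $\gamma_{k,x}\mu_x$ is returned to the $\|x^k-x^*\|^2$ coefficient, preserving a clean $(1-\mu_x\lambda_x/k)$ factor with $\mu_x\lambda_x>1$, and (ii) the surviving $(L_\theta^2/\mu_x)\|\theta^k-\theta^*\|^2$ term, multiplied by $\gamma_{k,x}=\lambda_x/k$ and combined with the $\Oscr(1/k)$ learning bound, produces an $\Oscr(1/k^2)$ perturbation whose constant is precisely the $\widetilde{M}$ appearing in the statement. A looser splitting would either spoil the $1/K$ rate or change the constant, so the weight $L_\theta^2/\mu_x$ is the delicate ingredient.
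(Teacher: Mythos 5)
Your proposal is correct and follows essentially the same route as the paper: bound the autonomous learning recursion first via the standard strongly convex SA rate to get $\mathbb{E}[\|\theta^k-\theta^*\|^2]\le Q_{\theta}(\lambda_{\theta})/k$, substitute this into the $x$-recursion, and use strong convexity plus optimality of $x^*$ together with the Young split weighted by $\mu_x$ (producing the contraction factor $1-\gamma_{k,x}\mu_x$ and the constant $\widetilde{M}$), finishing with the induction device for $a_{k+1}\le(1-c/k)a_k+d/k^2$ with $c>1$. The only differences are cosmetic: you prove the standard rate lemma inline where the paper cites inequality (5.292) of Shapiro et al., and you anchor the gradient decomposition at $\nabla_x f(x^k;\theta^*)$ (strong convexity at $\theta^*$, Lipschitz perturbation at $x^k$) rather than at $\nabla_x f(x^*;\theta^k)$ as the paper does, which are interchangeable under (A1-1).
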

\vspace{0.2in}
\begin{proof}
Suppose $A_k \triangleq \frac{1}{2} \|x^k - x^*\|^2$ and $a_k \triangleq
\mathbb{E}[A_k]$. Then, $A_{k+1}$ may be bounded as follows by using the
non-expansivity of the Euclidean projector:
\begin{align}
\begin{aligned}  \label{alg:stoch_grad_grad_strongly_constant_A}
   A_{k+1}  & = \frac{1}{2}\|x^{k+1}-x^*\|^2  = \frac{1}{2}\left\| \Pi_{X}
\left(x^{k}-\gamma_{x,k}(\nabla_x f(x^{k};\theta^{k}) + w^k)  \right) - \Pi_{X} (
		x^{*}) \right\|^2 \\
   &\leq\frac{1}{2} \| x^{k}-x^{*} -
 \gamma_{x,k}(\nabla_x f(x^{k};\theta^{k})+ w^k) \|^2 \\
	 & = A_k + \frac{1}{2} \gamma_{x,k}^2 \|\nabla_x f(x^{k};\theta^{k})+ w^k \|^2 - \gamma_{x,k} (x^{k}-x^{*})^T ( \nabla_x f(x^{k};\theta^{k})+ w^k ).
\end{aligned}
\end{align}
Note that $\mathbb{E}[(x^{k}-x^{*})^Tw^k] = \mathbb{E}[\mathbb{E}[(x^{k}-x^{*})^Tw^k|\mathcal{F}_k]]=\mathbb{E}[(x^{k}-x^{*})^T\mathbb{E}[w^k|\mathcal{F}_k]]=0$.
By taking expectations on both sides of
\eqref{alg:stoch_grad_grad_strongly_constant_A} and by invoking the
bounds
$\mathbb{E}[\|\nabla_x f(x^{k};\theta^{k})+ w^k \|^2]\leq M^2$  and
$\mathbb{E}[\|\nabla_{\theta} g(\theta^{k})+ v^k \|^2]\leq M_{\theta}^2$,
it follows that
\begin{align} \label{alg:stoch_grad_grad_strongly_constant_a}
  a_{k+1}
   & \leq a_k + \frac{1}{2} \gamma_{x,k}^2 M^2 - \gamma_{x,k} \mathbb{E}[(x^{k}-x^{*})^T  \nabla_x f(x^{k};\theta^{k})].
\end{align}
But $f(x;\theta)$ is strongly convex in $x$ with constant $\mu_x$
for every $\theta \in \Theta$, leading to the following
expression:
\begin{align}
\begin{aligned} \label{alg:stoch_grad_grad_strongly_constant_a_term2}
 \mathbb{E}[(x^{k}-x^{*})^T  \nabla_x f(x^{k};\theta^{k})]
  &= \mathbb{E}[(x^{k}-x^{*})^T  ( \nabla_x f(x^{k};\theta^{k}) -
		  \nabla_x f(x^{*};\theta^{k}) ) ]    \\
	 &  + \mathbb{E}[(x^{k}-x^{*})^T
  ( \nabla_x f(x^{*};\theta^{k}) - \nabla_x f(x^{*};\theta^{*}) ) ]   +
  \mathbb{E}[(x^{k}-x^{*})^T   \nabla_x f(x^{*};\theta^{*})  ]   \\
  & \geq\mu_x\mathbb{E}[\|x^{k}-x^{*}\|^2] + \mathbb{E}[(x^{k}-x^{*})^T
  ( \nabla_x f(x^{*};\theta^{k}) - \nabla_x f(x^{*};\theta^{*}) ) ].
\end{aligned}
\end{align}
Combining \eqref{alg:stoch_grad_grad_strongly_constant_a} and \eqref{alg:stoch_grad_grad_strongly_constant_a_term2}, we get
\begin{align}
 \notag a_{k+1}
    &\leq(1-2\gamma_{x,k}\mu_x)a_k + \frac{1}{2} \gamma_{x,k}^2 M^2  -
	\gamma_{x,k} \mathbb{E}[(x^{k}-x^{*})^T  ( \nabla_x
			f(x^{*};\theta^{k}) - \nabla_x f(x^{*};\theta^{*}) ) ]\\
\notag   &\leq(1-2\gamma_{x,k}\mu_x)a_k + \frac{1}{2} \gamma_{x,k}^2 M^2
     +\frac{1}{2} \gamma_{x,k}\mu_x \mathbb{E}[\|x^{k}-x^{*}\|^2] +
   \frac{1}{2}\frac{\gamma_{x,k}}{\mu_x}\mathbb{E}[ \|\nabla_x
   f(x^{*};\theta^{k}) - \nabla_x f(x^{*};\theta^{*}) \|^2 ]\\
   &\leq(1-\gamma_{x,k}\mu_x)a_k + \frac{1}{2} \gamma_{x,k}^2 M^2  + \frac{1}{2}\frac{\gamma_{x,k}}{\mu_x}L_{\theta}^2\mathbb{E}[\|\theta^{k} - \theta^{*} \|^2 ].
\label{eq:strongly_constant_a}
\end{align}
Suppose $\gamma_{\theta,k}=\lambda_{\theta}/k$. Since the function $g(\theta)$ is strongly convex, we can use the standard rate estimate (cf. inequality (5.292) in \cite{Shapiro09lecturesSA}) to get the following
\begin{align} \label{eq:error_theta_Q}
  \mathbb{E}[\|\theta^{k} - \theta^{*} \|^2 ] \leq \frac{Q_{\theta}(\lambda_{\theta})}{k},
\end{align}
where $Q_{\theta}(\lambda_{\theta})\triangleq \max\left\{ \lambda_{\theta}^2 M_{\theta}^2 (2\mu_{\theta}\lambda_{\theta} - 1)^{-1}, \mathbb{E}[\|\theta^{1} - \theta^{*} \|^2 ] \right\}$ with $\lambda_{\theta}>1/(2\mu_{\theta})$.
Suppose $\gamma_{x,k}=\lambda_{x}/k$, allowing us to claim the
following:
\begin{align*}
     a_{k+1}
      &\leq\left(1-\frac{\mu_x\lambda_{x}}{k}\right)a_k + \frac{1}{2}
	  \frac{\lambda_{x}^2M^2}{k^2}   +
	  \frac{1}{2}\frac{\lambda_{x}L_{\theta}^2Q_{\theta}(\lambda_{\theta})}{\mu_x
		  k^2}  = \left(1-\frac{\mu_x\lambda_{x}}{k}\right)a_k + \frac{1}{2} \frac{\lambda_{x}^2\widetilde{M}^2}{k^2},
\end{align*}
where $\widetilde{M}  \triangleq \sqrt{ M^2 + \frac{L_{\theta}^2Q_{\theta}(\lambda_{\theta})}{\mu_x \lambda_{x}} }$.
By assuming that $\lambda_{x}>1/\mu_{x}$, the result follows by
observing that
\begin{align*}
   \mathbb{E}[\|x^{k} - x^{*} \|^2 ] \leq \frac{Q_{x}(\lambda_{x})}{k},
\end{align*}
where $Q_{x}(\lambda_{x})\triangleq \max\left\{ \lambda_{x}^2
\widetilde{M}^2 (\mu_{x}\lambda_{x} - 1)^{-1}, \mathbb{E}[\|x^{1} -
x^{*} \|^2 ] \right\}$.
\end{proof}\\
{\noindent {\bf Remark:}
Notice that here we assume that $f$ and $g$ are both smooth and strongly
	convex. A more general framework is that of composite objectives
	where the objective is a sume of nonsmooth and smooth stochastic
	components. Lan~\cite{Lan12} proposed the accelerated stochastic
	approximation (AC-SA) algorithm for solving stochastic composite
	optimization (SCO) problems and proved that it achieves the optimal
	rate. In related work, Ghadimi and Lan \cite{GhadimiL12,GhadimiL13}
propose a multi-stage AC-SA algorithm, which possesses an optimal rate
	of convergence for solving strongly convex SCO problems in terms of
	the dependence on different problem parameters. While this problem
	class is beyond the current scope, this approach may aid in
	refinement of the constants in the Proposition
	\ref{thm:stoch_strongly_optim_error} in some regimes.
}

\indent A shortcoming of the previous result is the need for strong convexity of
$f(x,\theta)$ in $x$ for every $\theta \in \Theta$. In our next result,
	we weaken this requirement and allow for a merely convex $f$,
	extending the optimal constant stepsize result in
	\cite{Shapiro09lecturesSA}. Specifically, given a prescribed number
	of iterations, say $K$, the optimal ``constant stepsize'' derives
	the error minimizing steplength; in other words, $\gamma_k = \gamma$ for $1 \leq k \leq K$. This is in contrast with the
	constant stepsize result presented in Proposition
	\ref{prop:error_optim}, where $\gamma_k = \gamma$ for all $k$.
	steps. The following Lipschitzian assumption is imposed on the
	function $f(x;\theta)$.

\begin{assumption}[A\ref{assump:stoch_monotone_optim_error}] \label{assump:stoch_monotone_optim_error}
Suppose the following holds in addition to (A1-2).
\begin{enumerate}
  \item[(i)] For every {$x \in X$},  $f(x;\theta)$ is Lipschitz continuous in $\theta$ with
  constant $D_{\theta}$.\\
 \end{enumerate}
\end{assumption}

\begin{thm} [{\bf Rate estimates under convexity of $f$}] \label{thm:stoch_monotone_optim_error}
Suppose (A\ref{assump:filtration}) and (A\ref{assump:stoch_monotone_optim_error}) hold.
Suppose $\mathbb{E}[\|x^{k}-x^{*}\|^2]\leq M_x^2$,
$\mathbb{E}[\|\nabla_x f(x^{k};\theta^{k})+ w^k \|^2]\leq M^2$ and
$\mathbb{E}[\|\nabla_{\theta} g(\theta^{k})+ v^k \|^2]\leq M_{\theta}^2$
for all $x^{k}\in X$ and $\theta^{k}\in\Theta$.  Let $\{x^k,\theta^k\}$
be computed via Algorithm \ref{alg:stoch_strongly_optim}.  For $1\leq
i,t\leq k$, we define $v_t\triangleq\frac{\gamma_{x,t}}{\sum_{s=i}^k
	\gamma_{x,s}}$, $\tilde{x}_{i,k}\triangleq \sum_{t=i}^k v_t x^t$ and
	$D_X\triangleq\max_{x\in X}\|x-x^1\|$.  Suppose
 for $1\leq t\leq K$, $\gamma_x$ is defined as follows:
$$\gamma_x = \sqrt{\frac{4D_X^2  + L_{\theta}^2Q_{\theta}(\lambda_{\theta}) (1+\ln K)}{(M^2 +  M_x^2)K}},$$
where $Q_{\theta}(\lambda_{\theta})\triangleq \max\left\{ \lambda_{\theta}^2 M_{\theta}^2 (2\mu_{\theta}\lambda_{\theta} - 1)^{-1}, \mathbb{E}[\|\theta^{1} - \theta^{*} \|^2 ] \right\}$,
and $\gamma_{\theta,k}=\lambda_{\theta}/K$ with
$\lambda_{\theta}>1/(2\mu_{\theta})$.  Then the following holds for
$1\leq i\leq K$:
\begin{align}
\notag  \left| \mathbb{E}[ f(\tilde{x}_{i,K};\theta^{K}) - f(x^{*};\theta^{*})  ]\right|
  &\leq  \frac{\sqrt{Q_{\theta}(\lambda_{\theta})}D_{\theta} +C_{i,K}\sqrt{ B_K }}{\sqrt{K}},
\end{align}
where
$C_{i,K}=\frac{K}{K-i+1}$ and $B_K=(4D_X^2  + L_{\theta}^2Q_{\theta}(\lambda_{\theta}) (1+\ln K))(M^2 +  M_x^2)$.
\end{thm}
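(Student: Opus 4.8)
The plan is to insert the intermediate quantity $f(\tilde{x}_{i,K};\theta^*)$ and split the error into a learning part and an optimization part, namely
$$\mathbb{E}[f(\tilde{x}_{i,K};\theta^K)-f(x^*;\theta^*)] = \mathbb{E}[f(\tilde{x}_{i,K};\theta^K)-f(\tilde{x}_{i,K};\theta^*)] + \mathbb{E}[f(\tilde{x}_{i,K};\theta^*)-f(x^*;\theta^*)].$$
The first (learning) term I would control directly: by the Lipschitz continuity of $f(x;\cdot)$ in $\theta$ with constant $D_\theta$ (A\ref{assump:stoch_monotone_optim_error}), Jensen's inequality, and the strongly-convex learning rate \eqref{eq:error_theta_Q}, one gets $|\mathbb{E}[f(\tilde{x}_{i,K};\theta^K)-f(\tilde{x}_{i,K};\theta^*)]|\le D_\theta\,\mathbb{E}[\|\theta^K-\theta^*\|]\le D_\theta\sqrt{\mathbb{E}[\|\theta^K-\theta^*\|^2]}\le D_\theta\sqrt{Q_\theta(\lambda_\theta)/K}$, which supplies the first summand in the claimed bound. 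Since $x^*$ minimizes $f(\cdot;\theta^*)$, the optimization term is nonnegative, so bounding it from above and the learning term in absolute value will, by the triangle inequality, deliver the two-sided estimate.

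For the optimization term I would run the standard projected-stochastic-gradient recursion for the $x$-iterates. Starting from nonexpansivity of $\Pi_X$ with $x^*\in X$, expanding $\|x^{t+1}-x^*\|^2$, taking conditional expectations with $\mathbb{E}[w^t\mid\mathcal{F}_t]=0$ and the bound $\mathbb{E}[\|\nabla_x f(x^t;\theta^t)+w^t\|^2]\le M^2$, and then taking full expectations, I obtain
$$2\gamma_x\,\mathbb{E}[(x^t-x^*)^T\nabla_x f(x^t;\theta^t)]\le \mathbb{E}[\|x^t-x^*\|^2]-\mathbb{E}[\|x^{t+1}-x^*\|^2]+\gamma_x^2 M^2.$$
To convert the inner product into a function-value gap I split $\nabla_x f(x^t;\theta^t)$ around $\nabla_x f(x^t;\theta^*)$: convexity of $f(\cdot;\theta^*)$ (A1-2) gives $(x^t-x^*)^T\nabla_x f(x^t;\theta^*)\ge f(x^t;\theta^*)-f(x^*;\theta^*)$, while the Lipschitz continuity of $\nabla_x f$ in $\theta$ (A1-2) together with Cauchy--Schwarz bounds the remainder below by $-L_\theta\|x^t-x^*\|\|\theta^t-\theta^*\|$. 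Summing over $t=i,\dots,K$ telescopes the distance terms down to $\mathbb{E}[\|x^i-x^*\|^2]\le 4D_X^2$, since $x^i,x^*\in X$ and $D_X=\max_{x\in X}\|x-x^1\|$.

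The crux, and the step I expect to be delicate, is the cross term $2\gamma_x L_\theta\sum_{t=i}^K\mathbb{E}[\|x^t-x^*\|\,\|\theta^t-\theta^*\|]$, whose splitting must be chosen so the constants reassemble into $B_K$. The right choice is $2ab\le a^2+b^2$ with $a=\gamma_x\|x^t-x^*\|$ and $b=L_\theta\|\theta^t-\theta^*\|$, which puts a factor $\gamma_x^2$ on the $x$-part; then $\mathbb{E}[\|x^t-x^*\|^2]\le M_x^2$ and the harmonic bound $\sum_{t=i}^K\mathbb{E}[\|\theta^t-\theta^*\|^2]\le Q_\theta(\lambda_\theta)\sum_{t=1}^K t^{-1}\le Q_\theta(\lambda_\theta)(1+\ln K)$ yield a cross term at most $(K-i+1)\gamma_x^2 M_x^2+L_\theta^2 Q_\theta(\lambda_\theta)(1+\ln K)$. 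Collecting everything gives
$$2\gamma_x\sum_{t=i}^K\mathbb{E}[f(x^t;\theta^*)-f(x^*;\theta^*)]\le \big(4D_X^2+L_\theta^2 Q_\theta(\lambda_\theta)(1+\ln K)\big)+(K-i+1)\gamma_x^2(M^2+M_x^2).$$
Here the definition of the optimal constant step is used precisely so that $4D_X^2+L_\theta^2 Q_\theta(\lambda_\theta)(1+\ln K)=K\gamma_x^2(M^2+M_x^2)$; bounding $K-i+1\le K$ makes the right-hand side at most $2K\gamma_x^2(M^2+M_x^2)$. Finally, convexity gives $f(\tilde{x}_{i,K};\theta^*)\le\sum_{t=i}^K v_t f(x^t;\theta^*)$ with the uniform weights $v_t=1/(K-i+1)$ induced by the constant step, so dividing by $2\gamma_x(K-i+1)$ yields $\mathbb{E}[f(\tilde{x}_{i,K};\theta^*)-f(x^*;\theta^*)]\le \tfrac{K}{K-i+1}\gamma_x(M^2+M_x^2)=C_{i,K}\sqrt{B_K}/\sqrt{K}$, using the identity $\gamma_x(M^2+M_x^2)=\sqrt{B_K/K}$. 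Adding the learning-error bound from the first paragraph completes the estimate; the only real subtleties are getting this AM--GM split right so that the $(1+\ln K)$ factor appears with the correct constant, and invoking the already-established rate \eqref{eq:error_theta_Q} for the $\theta$-iterates.
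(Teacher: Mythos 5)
Your proposal is correct and follows essentially the same route as the paper's proof: the same decomposition through $f(\tilde{x}_{i,K};\theta^*)$, the same projected-SGD recursion with the gradient split around $\nabla_x f(x^t;\theta^*)$, the same $2ab\le a^2+b^2$ treatment of the cross term combined with the rate \eqref{eq:error_theta_Q} and the harmonic-sum bound, and the same telescoping, averaging, and optimal-stepsize identity $\gamma_x(M^2+M_x^2)=\sqrt{B_K/K}$. The only (cosmetic) difference is that you handle general $i$ directly via $K-i+1\le K$, whereas the paper first treats $i=1$ and then asserts the $C_{i,K}$ factor for general $i$; your version is, if anything, slightly more explicit.
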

\vspace{0.1in}
\begin{proof}
By using the same notation in Proposition  \ref{thm:stoch_strongly_optim_error}, we have from \eqref{alg:stoch_grad_grad_strongly_constant_a} that
\begin{align}
\notag  a_{k+1} &
    \leq a_k + \frac{1}{2} \gamma_{x,k}^2 M^2 - \gamma_{x,k}
   \mathbb{E}[(x^{k}-x^{*})^T  \nabla_x f(x^{k};\theta^{k})] \\
  & \leq a_k + \frac{1}{2} \gamma_{x,k}^2 M^2 - \gamma_{x,k}
   \mathbb{E}[(x^{k}-x^{*})^T \nabla_x f(x^{k};\theta^{*})] -   \gamma_{x,k} \mathbb{E}[(x^{k}-x^{*})^T (\nabla_x f(x^{k};\theta^{k})
		   - \nabla_x f(x^{k};\theta^{*}) ) ].
 \label{alg:stoch_grad_grad_monotone_constant_a}
\end{align}
Note that $f(x;\theta)$ is convex in $x$ for every $\theta \in
\Theta$, allowing us to leverage the gradient inequality.
\begin{align} \label{alg:stoch_grad_grad_monotone_constant_a_term2}
  \mathbb{E}[(x^{k}-x^{*})^T  \nabla_x f(x^{k};\theta^{*})]
     & \geq \mathbb{E}[f(x^{k};\theta^{*}) - f(x^{*};\theta^{*}) ].
\end{align}
Combining \eqref{alg:stoch_grad_grad_monotone_constant_a} and
\eqref{alg:stoch_grad_grad_monotone_constant_a_term2}, we obtain the
following:
\begin{align*}
\begin{aligned}
    a_{k+1}
    &\leq a_k + \frac{1}{2} \gamma_{x,k}^2 M^2 - \gamma_{x,k}
	\mathbb{E}[f(x^{k};\theta^{*}) - f(x^{*};\theta^{*}) ]  -	\gamma_{x,k} \mathbb{E}[(x^{k}-x^{*})^T (\nabla_x
			f(x^{k};\theta^{k}) - \nabla_x f(x^{k};\theta^{*}) ) ].
\end{aligned}
\end{align*}
This allows for constructing the following bounds:
\begin{align}
 \notag  \gamma_{x,k} \mathbb{E}[f(x^{k};\theta^{*}) - f(x^{*};\theta^{*})  ]
&\leq
      a_k -a_{k+1}+ \frac{1}{2} \gamma_{x,k}^2 M^2 - \gamma_{x,k}
\mathbb{E}[(x^{k}-x^{*})^T (\nabla_x f(x^{k};\theta^{k}) - \nabla_x f(x^{k};\theta^{*}) ) ] \\
   \notag  &\leq a_k -a_{k+1}+ \frac{1}{2} \gamma_{x,k}^2 M^2  +
\frac{1}{2}\gamma_{x,k}^{2} \mathbb{E}[\|x^{k}-x^{*}\|^2] +\frac{1}{2}
\mathbb{E}[\|\nabla_x f(x^{k};\theta^{k}) - \nabla_x f(x^{k};\theta^{*}) \|^2 ] \\
\notag     &\leq a_k -a_{k+1}+ \frac{1}{2} \gamma_{x,k}^2 M^2 + \frac{1}{2}\gamma_{x,k}^{2} M_x^2   +\frac{1}{2} L_{\theta}^2\mathbb{E}[\|\theta^{k} -  \theta^{*} \|^2] \\
    &\leq a_k -a_{k+1}+ \frac{1}{2} \gamma_{x,k}^2 (M^2 +  M_x^2) +\frac{1}{2} \frac{L_{\theta}^2Q_{\theta}(\lambda_{\theta})}{k},
\label{eq:convex_constant_a}
\end{align}
where the second inequality follows from the fact that $2ab\leq
a^2+b^2$, the third inequality follows from the boundedness of
$\mathbb{E}[\|x^{k}-x^{*}\|^2]$ and Lipschitz continuity of $\nabla_x f(x;\theta)$ in $\theta$, and the last inequality follows from  \eqref{eq:error_theta_Q}.
As a result, for $1\leq i\leq k$, we have the following:
\begin{align}
\notag \sum_{t=i}^k\gamma_{x,t} \mathbb{E}[f(x^{t};\theta^{*}) - f(x^{*};\theta^{*})  ]
& \leq \sum_{t=i}^k (a_t  -a_{t+1} )+ \frac{1}{2} \sum_{t=i}^k \gamma_{x,t}^2 (M^2 +  M_x^2) +  \frac{1}{2} \sum_{t=i}^k \frac{L_{\theta}^2Q_{\theta}(\lambda_{\theta})}{t}   \\
 \label{eq:optim_bound}
\notag &\leq  a_i + \frac{1}{2} \sum_{t=i}^k \gamma_{x,t}^2 (M^2 +  M_x^2) +  \frac{1}{2} \sum_{t=i}^k \frac{L_{\theta}^2Q_{\theta}(\lambda_{\theta})}{t} \\
 &\leq  a_i + \frac{1}{2} \sum_{t=i}^k \gamma_{x,t}^2 (M^2 +  M_x^2) +  \frac{1}{2} L_{\theta}^2Q_{\theta}(\lambda_{\theta}) (1+\ln k).
\end{align}
Next, we define $v_t\triangleq\frac{\gamma_{x,t}}{\sum_{s=i}^k
	\gamma_{x,s}}$ and $D_X\triangleq\displaystyle \max_{x\in X}\|x-x^1\|$.
The following holds invoking these definitions:
\begin{align}
\begin{aligned}  \label{alg:stoch_grad_grad_monotone_constant_ab_final_average}
  \mathbb{E}\left[\sum_{t=i}^kv_t f(x^{t};\theta^{*}) -
   f(x^{*};\theta^{*})  \right]
     &\leq  \frac{a_i  +\frac{1}{2} \sum_{t=i}^k \gamma_{x,t}^2 (M^2 +  M_x^2) +  \frac{1}{2} L_{\theta}^2Q_{\theta}(\lambda_{\theta}) (1+\ln k)}{\sum_{t=i}^k\gamma_{x,t}}.
\end{aligned}
\end{align}
Next, we consider points given by $\tilde{x}_{i,k}\triangleq \sum_{t=i}^k v_t x^t$. By convexity of $X$, we have that $\tilde{x}_{i,k}\in X$ and by the convexity of $f(x;\theta^{*})$ in $x$, we have
$f(\tilde{x}_{i,k};\theta^{*})\leq \sum_{t=i}^k v_t f(x^t)$.
From \eqref{alg:stoch_grad_grad_monotone_constant_ab_final_average} and by noting that $a_1\leq\frac{1}{2}D_X^2$ and $a_i\leq 2 D_X^2$ for $i>1$, we obtain the following for $1\leq i \leq k$
\begin{align}  \label{alg:stoch_grad_grad_monotone_constant_ab_final_average_convex}
 \quad\ \mathbb{E}[ f(\tilde{x}_{i,k};\theta^{*}) - f(x^{*};\theta^{*})  ]  \leq  \frac{4D_X^2   +\sum_{t=i}^k \gamma_{x,t}^2 (M^2 +  M_x^2) + L_{\theta}^2Q_{\theta}(\lambda_{\theta}) (1+\ln k)}{2\sum_{t=i}^k\gamma_{x,t}}.
\end{align}
Suppose $\gamma_{x,t}=\gamma_{x}$ for $t=1,\ldots,k$. Then, it follows that
\begin{align} \label{optim_bound2}
   \mathbb{E}[ f(\tilde{x}_{1,k};\theta^{*}) - f(x^{*};\theta^{*})  ]  &\leq  \frac{4D_X^2   + k\gamma_{x}^{2 } (M^2 +  M_x^2)+ L_{\theta}^2Q_{\theta}(\lambda_{\theta}) (1+\ln k)}{2k\gamma_{x}}.
\end{align}
By minimizing the right hand side in $\gamma_x>0$, we obtain that
$$\gamma_x = \sqrt{\frac{4D_X^2  + L_{\theta}^2Q_{\theta}(\lambda_{\theta}) (1+\ln k)}{(M^2 +  M_x^2)k}}.$$
This implies the following bound:
\begin{align*}
   \mathbb{E}[ f(\tilde{x}_{1,k};\theta^{*}) - f(x^{*};\theta^{*})  ]
   &\leq   \sqrt{\frac{ B_k}{k}},
\end{align*}
where $B_k \triangleq (4D_X^2  + L_{\theta}^2Q_{\theta}(\lambda_{\theta}) (1+\ln k))(M^2 +  M_x^2)$.
Next, we can also claim that for $1\leq i\leq k$,
\begin{align} \label{eq:error_monotone_f}
   \mathbb{E}[ f(\tilde{x}_{i,k};\theta^{*}) - f(x^{*};\theta^{*})  ]  &\leq  C_{i,k}\sqrt{\frac{ B_k}{k}},
\end{align}
where $C_{i,k}=\frac{k}{k-i+1}$.
Thus, by employing \eqref{eq:error_theta_Q}, \eqref{eq:error_monotone_f}
and the Lipschitz continuity of $f(x;\theta)$ in $\theta$, we have the
required result:
\begin{align*}
{\left|\mathbb{E}[ f(\tilde{x}_{i,k};\theta^{k}) - f(x^{*};\theta^{*})  ]\right|}
\notag &\leq  \left|\mathbb{E}[ f(\tilde{x}_{i,k};\theta^{k}) - f(\tilde{x}_{i,k};\theta^{*})  ]\right| +
  \left|\mathbb{E}[ f(\tilde{x}_{i,k};\theta^{*}) - f(x^{*};\theta^{*})  ]\right| \\
\notag  &\leq D_{\theta}\mathbb{E}[\|\theta^{k} - \theta^{*} \|] +
  \mathbb{E}[ f(\tilde{x}_{i,k};\theta^{*}) - f(x^{*};\theta^{*})  ]\\
 \notag &\leq   \frac{\sqrt{Q_{\theta}(\lambda_{\theta})}D_{\theta}}{\sqrt{k}}+
  \mathbb{E}[ f(\tilde{x}_{i,k};\theta^{*}) - f(x^{*};\theta^{*})  ] \leq  \frac{\sqrt{Q_{\theta}(\lambda_{\theta})}D_{\theta} +C_{i,k}\sqrt{ B_k }}{\sqrt{k}}.
\end{align*}
\end{proof}

\noindent {\bf Remark:} In effect, in the context of learning and optimization, the averaging
approach leads to a complexity bound given loosely by
$$\Oscr\left(\frac{a_{\theta}}{\sqrt{K}}+\underbrace{\frac{b+c_{\theta}\sqrt{\ln(K)}}{\sqrt{K}}}_{\rm
		\tiny \mbox{degradation from learning }}\right),$$
where $a_{\theta}, b, c_{\theta}$ are suitably defined.  If $\theta^*$
is available, then $a_{\theta}, c_{\theta} = 0$, leading to the standard
bound of $\Oscr(1/\sqrt{K})$. While it is not surprising that
the requirement to learn $\theta^*$ imposes a degradation, it appears
that this degradation is not severe. However, by changing the averaging
window, this degradation disappears from a rate standpoint.
Specifically, the next result is a  corollary of Theorem
\ref{thm:stoch_monotone_optim_error} and uses a modified averaging
window, as seen in \cite{Nemirovski09}.
\begin{cor} [{\bf Rate estimates under convexity of $f$}]  \label{cor:optim}
Suppose (A\ref{assump:filtration}) and (A\ref{assump:stoch_monotone_optim_error}) hold.
Suppose $\mathbb{E}[\|x^{k}-x^{*}\|^2]\leq M_x^2$,
$\mathbb{E}[\|\nabla_x f(x^{k};\theta^{k})+ w^k \|^2]\leq M^2$ and
$\mathbb{E}[\|\nabla_{\theta} g(\theta^{k})+ v^k \|^2]\leq M_{\theta}^2$
for all $x^{k}\in X$ and $\theta^{k}\in\Theta$.  Let $\{x^k,\theta^k\}$
be computed via Algorithm \ref{alg:stoch_strongly_optim}. Let $k$ be a positive even number.  For $k/2\leq
 t\leq k$, we define $v_t\triangleq\frac{\gamma_{x,t}}{\sum_{s=k/2}^k
	\gamma_{x,s}}$, $\tilde{x}_{k/2,k}\triangleq \sum_{t=k/2}^k v_t x^t$ and
	$D_X\triangleq\max_{x\in X}\|x-x^1\|$.  Suppose
 for $1\leq t\leq K$, $\gamma_x$ is defined as follows:
$$\gamma_x = \sqrt{\frac{4D_X^2  + L_{\theta}^2Q_{\theta}(\lambda_{\theta}) (1+\ln 2)}{(M^2 +  M_x^2)k}},$$
where $Q_{\theta}(\lambda_{\theta})\triangleq \max\left\{ \lambda_{\theta}^2 M_{\theta}^2 (2\mu_{\theta}\lambda_{\theta} - 1)^{-1}, \mathbb{E}[\|\theta^{1} - \theta^{*} \|^2 ] \right\}$,
and $\gamma_{\theta,k}=\lambda_{\theta}/K$ with
$\lambda_{\theta}>1/(2\mu_{\theta})$.  Then the following holds:
\begin{align}
\notag  \left| \mathbb{E}[ f(\tilde{x}_{K/2,K};\theta^{K}) - f(x^{*};\theta^{*})  ]\right|
  &\leq  \frac{\sqrt{Q_{\theta}(\lambda_{\theta})}D_{\theta} +2\sqrt{ B  }}{\sqrt{K}},
\end{align}
where
$B  \triangleq (4D_X^2  + L_{\theta}^2Q_{\theta}(\lambda_{\theta}) (1+\ln 2))(M^2 +  M_x^2)$.
\end{cor}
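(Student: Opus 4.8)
The plan is to retrace the argument of Theorem~\ref{thm:stoch_monotone_optim_error} essentially unchanged up to the per-iteration bound \eqref{eq:convex_constant_a}, altering only the averaging window: I would sum from $t=K/2$ to $t=K$ rather than from $t=1$. The one place where the window enters nontrivially is the partial harmonic sum $\sum_{t=i}^{K}1/t$ produced by the $\theta$-error term $\tfrac12 L_\theta^2 Q_\theta(\lambda_\theta)/t$ in \eqref{eq:convex_constant_a}. For the full window this is bounded by $1+\ln K$, whereas for $[K/2,K]$ an integral comparison gives $\sum_{t=K/2}^{K}1/t \le \tfrac{2}{K}+\int_{K/2}^{K}\tfrac{dt}{t}=\tfrac{2}{K}+\ln 2\le 1+\ln 2$, a constant independent of $K$. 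This is precisely the mechanism, following~\cite{Nemirovski09}, by which the $\sqrt{\ln K}$ degradation is removed.

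Concretely, summing \eqref{eq:convex_constant_a} over $t=K/2,\dots,K$ and telescoping the $a_t-a_{t+1}$ terms (only $a_{K/2}$ survives, and $a_{K/2}\le 2D_X^2$ using $K/2>1$ for $K\ge 4$, while the $4D_X^2$ in the numerator dominates the degenerate case anyway) yields the analogue of \eqref{eq:optim_bound} with $1+\ln K$ replaced by $1+\ln 2$. Dividing by $\sum_{t=K/2}^{K}\gamma_{x,t}$, invoking convexity of $X$ and of $f(\cdot;\theta^*)$ so that $f(\tilde x_{K/2,K};\theta^*)\le\sum_t v_t f(x^t;\theta^*)$, and specializing to $\gamma_{x,t}\equiv\gamma_x$ gives, as in \eqref{alg:stoch_grad_grad_monotone_constant_ab_final_average_convex},
\[
\mathbb{E}[f(\tilde x_{K/2,K};\theta^*)-f(x^*;\theta^*)]\le\frac{4D_X^2+N\gamma_x^2(M^2+M_x^2)+L_\theta^2 Q_\theta(\lambda_\theta)(1+\ln 2)}{2N\gamma_x},
\]
where $N=K/2+1\ge K/2$ is the number of terms. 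Writing $A\triangleq 4D_X^2+L_\theta^2 Q_\theta(\lambda_\theta)(1+\ln 2)$ and $C\triangleq M^2+M_x^2$, the right-hand side equals $A/(2N\gamma_x)+\tfrac12\gamma_x C$; inserting the prescribed $\gamma_x=\sqrt{A/(CK)}$ and using $N\ge K/2$ bounds this by $\tfrac32\sqrt{AC/K}=\tfrac32\sqrt{B/K}$, which the crude constant $2$ in the statement comfortably absorbs (the exact minimizer $\gamma_x=\sqrt{A/(NC)}$ gives the even sharper $\sqrt{B/N}\le\sqrt{2B/K}$).

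Finally, I would decouple the $\theta^K$-dependence exactly as at the end of Theorem~\ref{thm:stoch_monotone_optim_error}: split $|\mathbb{E}[f(\tilde x_{K/2,K};\theta^K)-f(x^*;\theta^*)]|$ by the triangle inequality into the perturbation term $|\mathbb{E}[f(\tilde x_{K/2,K};\theta^K)-f(\tilde x_{K/2,K};\theta^*)]|$ and the optimization term just bounded. The former is controlled by the Lipschitz continuity of $f(x;\cdot)$ in $\theta$ (Assumption~\ref{assump:stoch_monotone_optim_error}) together with Jensen's inequality, $\mathbb{E}\|\theta^K-\theta^*\|\le\sqrt{\mathbb{E}\|\theta^K-\theta^*\|^2}\le\sqrt{Q_\theta(\lambda_\theta)/K}$ from \eqref{eq:error_theta_Q}, yielding $\sqrt{Q_\theta(\lambda_\theta)}D_\theta/\sqrt{K}$; adding the two contributions produces the claimed bound. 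I do not expect a genuine obstacle: the entire content is the constant-versus-logarithmic behavior of the truncated harmonic sum, so the only care needed is bookkeeping of constants—the term count $N=K/2+1$, the estimate $a_{K/2}\le 2D_X^2$ (only for $K\ge 4$), and the mild mismatch between the stated $\gamma_x$, which carries $K$ rather than $N$ in its denominator, and the exact minimizer.
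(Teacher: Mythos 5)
Your proposal is correct and follows essentially the same route as the paper's proof: both rest on the observation that the truncated harmonic sum over $[K/2,K]$ is bounded by the constant $1+\ln 2$ rather than $1+\ln K$, followed by the telescoping bound $a_{K/2}\leq 2D_X^2$, the convexity argument for the averaged iterate, insertion of the prescribed steplength, and the final Lipschitz-in-$\theta$ decoupling via \eqref{eq:error_theta_Q}. The only difference is bookkeeping of constants: you work directly with the window length $N=K/2+1$ and obtain $\tfrac32\sqrt{B/K}$, whereas the paper routes through the conversion factor $C_{k/2,k}=\tfrac{k}{k-k/2+1}\leq 2$; both land within the stated constant $2$.
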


\begin{proof}
When $i=k/2$ where $k$ is a positive even number,
the second inequality of  \eqref{eq:optim_bound} becomes
\begin{align}
\notag \sum_{t=k/2}^k\gamma_{x,t} \mathbb{E}[f(x^{t};\theta^{*}) - f(x^{*};\theta^{*})  ]
\notag &\leq  a_{k/2} + \frac{1}{2} \sum_{t=k/2}^k \gamma_{x,t}^2 (M^2 +  M_x^2) +  \frac{1}{2} \sum_{t=k/2}^k \frac{L_{\theta}^2Q_{\theta}(\lambda_{\theta})}{t} \\
\notag &\leq  a_{k/2} + \frac{1}{2} \sum_{t=k/2}^k \gamma_{x,t}^2 (M^2 +  M_x^2) +  \frac{1}{2}L_{\theta}^2Q_{\theta}(\lambda_{\theta}) \left[ \sum_{t=1}^k \frac{1}{t} - \sum_{t=k/2-1}^k \frac{1}{t} \right] \\
\notag &\leq  a_{k/2} + \frac{1}{2} \sum_{t=k/2}^k \gamma_{x,t}^2 (M^2 +  M_x^2) +  \frac{1}{2}L_{\theta}^2Q_{\theta}(\lambda_{\theta}) \left[ 1+\ln(k) - \ln(k/2) \right] \\
 &\leq  a_{k/2} + \frac{1}{2} \sum_{t=k/2}^k \gamma_{x,t}^2 (M^2 +  M_x^2) +  \frac{1}{2}L_{\theta}^2Q_{\theta}(\lambda_{\theta}) ( 1+\ln 2 ).
\end{align}
Then,  \eqref{optim_bound2} becomes
\begin{align*}
   \mathbb{E}[ f(\tilde{x}_{1,k};\theta^{*}) - f(x^{*};\theta^{*})  ]  &\leq  \frac{4D_X^2   + k\gamma_{x}^{2 } (M^2 +  M_x^2)+ L_{\theta}^2Q_{\theta}(\lambda_{\theta}) (1+\ln 2)}{2k\gamma_{x}}.
\end{align*}
By minimizing the right hand side in $\gamma_x>0$, we obtain that
$$\gamma_x = \sqrt{\frac{4D_X^2  + L_{\theta}^2Q_{\theta}(\lambda_{\theta}) (1+\ln 2)}{(M^2 +  M_x^2)k}}.$$
This implies the following bound:
\begin{align*}
   \mathbb{E}[ f(\tilde{x}_{1,k};\theta^{*}) - f(x^{*};\theta^{*})  ]
   &\leq   \sqrt{\frac{ B }{k}},
\end{align*}
where $B  \triangleq (4D_X^2  + L_{\theta}^2Q_{\theta}(\lambda_{\theta}) (1+\ln 2))(M^2 +  M_x^2)$.
Next, we can also claim that,
\begin{align*}
   \mathbb{E}[ f(\tilde{x}_{k/2,k};\theta^{*}) - f(x^{*};\theta^{*})  ]  &\leq  C_{k}\sqrt{\frac{ B }{k}},
\end{align*}
where $C_{ k}=\frac{ k}{k-k/2+1}\leq 2$.
Thus, we have the
required result:
\begin{align*}
{\left|\mathbb{E}[ f(\tilde{x}_{k/2,k};\theta^{k}) - f(x^{*};\theta^{*})  ]\right|}
\notag
  &\leq  \frac{\sqrt{Q_{\theta}(\lambda_{\theta})}D_{\theta} +2\sqrt{ B  }}{\sqrt{k}}.
\end{align*}
\end{proof}

We now present a constant steplength error bound where the steplength is
fixed over the entire algorithm. As mentioned before, this differs from  Theorem
\ref{thm:stoch_monotone_optim_error} in that the number of iterations is
not fixed. Constant steplength statements are particularly relevant in
networked regimes where the coordination of changing steplength
sequences across a collection of agents may prove complicated.
\vspace{0.2in}\\
\begin{prop}[{\bf Constant steplength error bound}] \label{prop:error_optim}
	Suppose (A\ref{assump:filtration}) holds. Suppose $\gamma_{\theta,k} := \gamma_{\theta}$
and $\gamma_{x,k} := \gamma_{x}$.  Suppose $\mathbb{E}[\|x^{k}-x^{*}\|^2]\leq M_x^2$ and
$\mathbb{E}[\|\nabla_x f(x^{k};\theta^{k})+ w^k \|^2]\leq M^2$    for all
$x^k\in X$.
Suppose $A_k \triangleq \frac{1}{2} \|x^k - x^*\|^2$ and $a_k \triangleq \mathbb{E}[A_k]$.
Let $\{x^k,\theta^k\}$
be computed via Algorithm \ref{alg:stoch_strongly_optim}.
\begin{itemize}
   \item[(i)] Suppose (A\ref{assump:stoch_strongly_optim}-1) holds. Then, the following holds:
  \begin{align*}
 \limsup_{k \to \infty}   a_{k} &  \leq  \frac{1}{2\mu_x} \gamma_{x} M^2  +
	 \frac{1}{2\mu^2_x} \frac{ \gamma_\theta
		\nu_{\theta}^2L_{\theta}^2}{2\mu_{\theta} - \gamma_{\theta} C_{\theta}^2};
		\end{align*}
  \item[(ii)] Suppose (A1-2) and
	  (A\ref{assump:stoch_monotone_optim_error}) hold and $0<\tau<1$. Then, the following holds:
  \begin{align*}
 \limsup_{k \to \infty}\left|\mathbb{E}[ f(x^k;\theta^{k}) - f(x^*;\theta^{*})  ]\right|
 & \leq\frac{1}{2} \gamma_{x} M^2 + \frac{1}{2}\gamma_{x}^{1-\tau}  M_x^2 +\frac{1}{2} \gamma_{x}^{\tau-1} L_{\theta}^2 \frac{ \gamma_{ \theta}
		\nu_{\theta}^2}{2\mu_{\theta} - \gamma_{ \theta} C_{\theta}^2} +D_{\theta}\sqrt{\frac{ \gamma_{ \theta}
		\nu_{\theta}^2}{2\mu_{\theta} - \gamma_{ \theta} C_{\theta}^2}}.
 \end{align*}
\end{itemize}
\end{prop}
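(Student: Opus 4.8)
The plan is to read both parts as steady-state (limsup) analogues of the per-iteration recursions already derived in the diminishing-step analysis, specialized to $\gamma_{x,k}\equiv\gamma_x$ and $\gamma_{\theta,k}\equiv\gamma_\theta$. The common first step is a constant-steplength bound on the learning error. Taking total expectations in \eqref{eq:stoch_strongly_optim_exp_final_theta} gives the linear recursion $\mathbb{E}[\|\theta^{k+1}-\theta^*\|^2]\leq q_\theta^2\,\mathbb{E}[\|\theta^{k}-\theta^*\|^2]+\gamma_\theta^2\nu_\theta^2$, where $q_\theta^2=1-(2\mu_\theta-\gamma_\theta C_\theta^2)\gamma_\theta$. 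Provided $\gamma_\theta<2\mu_\theta/C_\theta^2$ (so that $q_\theta^2\in(0,1)$), unrolling this geometric recursion yields $\limsup_{k\to\infty}\mathbb{E}[\|\theta^{k}-\theta^*\|^2]\leq b_\theta$, with $b_\theta\triangleq \gamma_\theta\nu_\theta^2/(2\mu_\theta-\gamma_\theta C_\theta^2)$. This $b_\theta$ is exactly the quantity that appears (multiplying $L_\theta^2$, and under the square root) in both displayed bounds, so the remaining work is to propagate it into the $x$-recursions.

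For part (i) I would start from \eqref{eq:strongly_constant_a}, which under the constant step reads $a_{k+1}\leq(1-\gamma_x\mu_x)a_k+\tfrac{1}{2}\gamma_x^2 M^2+\tfrac{1}{2}(\gamma_x/\mu_x)L_\theta^2\,\mathbb{E}[\|\theta^{k}-\theta^*\|^2]$. Writing this as $a_{k+1}\leq(1-\rho)a_k+c_k$ with $\rho=\gamma_x\mu_x\in(0,1)$ and $\limsup_k c_k=\tfrac{1}{2}\gamma_x^2M^2+\tfrac{1}{2}(\gamma_x/\mu_x)L_\theta^2 b_\theta$, I would invoke the elementary fact that such an affine-contraction recursion satisfies $\limsup_k a_k\leq \limsup_k c_k/\rho$ (for any $\varepsilon>0$ eventually $c_k\leq\limsup_k c_k+\varepsilon$, and iterating the contraction pins the limsup at $(\limsup_k c_k+\varepsilon)/\rho$). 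Dividing by $\rho=\gamma_x\mu_x$ and substituting $b_\theta$ produces precisely $\tfrac{1}{2\mu_x}\gamma_x M^2+\tfrac{1}{2\mu_x^2}\gamma_\theta\nu_\theta^2 L_\theta^2/(2\mu_\theta-\gamma_\theta C_\theta^2)$, as stated.

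For part (ii) strong convexity is gone, so there is no contraction on $a_k$ and I would instead work with the function-gap recursion \eqref{eq:convex_constant_a}, but replace its $\gamma_x^2$-weighted splitting of the cross term $-\gamma_x(x^k-x^*)^T(\nabla_x f(x^k;\theta^k)-\nabla_x f(x^k;\theta^*))$ by a $\tau$-weighted Young's inequality. Using $\|\nabla_x f(x^k;\theta^k)-\nabla_x f(x^k;\theta^*)\|\leq L_\theta\|\theta^k-\theta^*\|$ and $\mathbb{E}[\|x^k-x^*\|^2]\leq M_x^2$, I would bound $\gamma_x L_\theta\|x^k-x^*\|\|\theta^k-\theta^*\|\leq \tfrac{1}{2}\gamma_x^{2-\tau}\|x^k-x^*\|^2+\tfrac{1}{2}\gamma_x^{\tau}L_\theta^2\|\theta^k-\theta^*\|^2$, giving $\gamma_x\,\mathbb{E}[f(x^k;\theta^*)-f(x^*;\theta^*)]\leq a_k-a_{k+1}+\tfrac{1}{2}\gamma_x^2M^2+\tfrac{1}{2}\gamma_x^{2-\tau}M_x^2+\tfrac{1}{2}\gamma_x^{\tau}L_\theta^2\,\mathbb{E}[\|\theta^k-\theta^*\|^2]$. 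Dividing by $\gamma_x$, taking $\limsup$, and inserting $b_\theta$ recovers the first three terms $\tfrac{1}{2}\gamma_x M^2+\tfrac{1}{2}\gamma_x^{1-\tau}M_x^2+\tfrac{1}{2}\gamma_x^{\tau-1}L_\theta^2 b_\theta$; passing from $\theta^*$ to $\theta^k$ in the objective via the $\theta$-Lipschitz property (A\ref{assump:stoch_monotone_optim_error}) and Jensen's inequality, $|\mathbb{E}[f(x^k;\theta^k)-f(x^k;\theta^*)]|\leq D_\theta\mathbb{E}[\|\theta^k-\theta^*\|]\leq D_\theta\sqrt{b_\theta}$ in the limsup, contributes the final summand.

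I expect the telescoping term $a_k-a_{k+1}$ in part (ii) to absorb essentially all the difficulty. Summing the gap recursion over $k=1,\dots,N$ and using $a_{N+1}\geq 0$ bounds the \emph{running average} of the gaps (hence $\liminf_k$) by $\limsup_k c_k/\gamma_x$, which already matches the three leading terms; upgrading this to the pointwise $\limsup$ in the statement is the delicate point. Because $\gamma_x$ is constant the residual term $c_k$ does not vanish, so $\sum_k c_k=\infty$ and Lemma \ref{lem:supermartingale2} cannot be invoked to force $\{a_k\}$ to converge (equivalently, to make $\limsup_k(a_k-a_{k+1})\leq 0$), unlike in the diminishing-step regime. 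I would therefore either strengthen the argument by establishing convergence of the bounded sequence $\{a_k\}$ by independent means, or interpret the stated $\limsup$ bound for an averaged iterate consistent with Theorem \ref{thm:stoch_monotone_optim_error}; every other ingredient is routine substitution together with the two Young's inequalities and the steady-state learning bound $b_\theta$.
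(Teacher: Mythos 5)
Your proposal follows the same route as the paper's proof: the steady-state learning bound $\limsup_{k\to\infty}\mathbb{E}[\|\theta^k-\theta^*\|^2]\leq \gamma_\theta\nu_\theta^2/(2\mu_\theta-\gamma_\theta C_\theta^2)$ obtained by taking limit superior in the recursion \eqref{eq:stoch_strongly_optim_exp_final_theta}; the affine-contraction recursion \eqref{eq:strongly_constant_a} for part (i), solved for its limsup; and, for part (ii), the gap recursion \eqref{eq:convex_constant_a} with the $\tau$-weighted Young splitting of the cross term, followed by the $D_\theta$-Lipschitz/H\"older passage from $f(x^k;\theta^*)$ to $f(x^k;\theta^k)$. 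Part (i) and the $\theta$-bound are complete and match the paper essentially line for line.

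The ``delicate point'' you flag in part (ii) is genuine, and it is present in the paper's own proof. The paper takes limit superior across $\gamma_x g_k \leq a_k - a_{k+1} + c_k$, where $g_k \triangleq \mathbb{E}[f(x^k;\theta^*) - f(x^*;\theta^*)]$, bounds the right-hand side by $\limsup_{k\to\infty} a_k - \limsup_{k\to\infty} a_{k+1} + \limsup_{k\to\infty} c_k$, and cancels the first two terms. But the valid inequality is $\limsup_{k\to\infty}(a_k - a_{k+1}) \leq \limsup_{k\to\infty} a_k - \liminf_{k\to\infty} a_{k+1}$, and this difference need not vanish unless $\{a_k\}$ converges --- which, as you observe, cannot be extracted from Lemma \ref{lem:supermartingale2} in the constant-steplength regime, since the residual satisfies $\sum_k c_k = \infty$. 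What the telescoped recursion actually delivers is a bound on the Ces\`aro averages of $g_k$, hence on $\liminf_{k\to\infty} g_k$ or on an averaged iterate in the spirit of Theorem \ref{thm:stoch_monotone_optim_error}, exactly as you say. So your refusal to cancel the telescoping terms is more careful than the paper itself, and either of your proposed repairs --- establishing convergence of the bounded sequence $\{a_k\}$ by independent means, or restating the part (ii) bound for averaged iterates (equivalently, for the liminf) --- is what a fully rigorous version of that claim requires.
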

\vspace{0.1in}
\begin{proof}
By  \eqref{eq:stoch_strongly_optim_exp_final_theta}, we get the
following:
\begin{align*}
   \mathbb{E}[\|\theta^{k+1}-\theta^{*}\|^2 \mid \mathcal{F}_k]
   & \leq q_{k,\theta}^2 \|\theta^{k}-\theta^{*}\|^2 + \gamma_{k,\theta}^2 \nu_{\theta}^2,
\end{align*}
where $q_{k,\theta} \triangleq \sqrt{1-2\gamma_{k,\theta} \mu_{\theta} +
	\gamma_{k,\theta}^2 C_{\theta}^2}$.
Suppose $\gamma_{\theta,k} := \gamma_{\theta}$ is chosen such that $(1-q_{\theta}) < 1$ where $q_{\theta,k} := q_{\theta}$.
By taking the expectation and limit supremum on both sides, we have
\begin{align*}
   \limsup_{k \to \infty}\mathbb{E}[\|\theta^{k+1}-\theta^{*}\|^2]
   & \leq q_{ \theta}^2 \limsup_{k \to \infty} \mathbb{E}[\|\theta^{k}-\theta^{*}\|^2] + \gamma_{ \theta}^2 \nu_{\theta}^2,
\end{align*}
or,
\begin{align} \label{eq:theta_bound}
  \limsup_{k \to \infty} \mathbb{E}[\|\theta^k - \theta^*\|^2] \leq \frac{ \gamma_{ \theta}
		\nu_{\theta}^2}{2\mu_{\theta} - \gamma_{ \theta} C_{\theta}^2}.
\end{align}
\begin{enumerate}
\item[(i)] $f$ is strongly convex:
		From \eqref{eq:strongly_constant_a}, for $\gamma_{x,k} := \gamma_x$ where
	$\gamma_x $ is sufficiently small, we have the following:
\begin{align*}
    a_{k+1} \leq(1-\gamma_{x}\mu_x)a_k + \frac{1}{2} \gamma_{x}^2 M^2  + \frac{1}{2}\frac{\gamma_{x}}{\mu_x}L_{\theta}^2\mathbb{E}[\|\theta^{k} - \theta^{*} \|^2 ].
\end{align*}
It follows that
\begin{align*}
 \limsup_{k \to \infty}   a_{k+1} &  \leq (1-\gamma_{x}\mu_x) \limsup_{k
	 \to \infty} a_k + \frac{1}{2} \gamma_{x}^2 M^2  +
	 \frac{1}{2}\frac{\gamma_{x}}{\mu_x}L_{\theta}^2 \limsup_{k \to
		 \infty} \mathbb{E}[\|\theta^{k} - \theta^{*} \|^2 ] \\
		 		& \leq  (1-\gamma_{x}\mu_x) \limsup_{k
	 \to \infty} a_k + \frac{1}{2} \gamma_{x}^2 M^2  +
	 \frac{1}{2}\frac{\gamma_{x}}{\mu_x}L_{\theta}^2 \frac{ \gamma_\theta
		\nu_{\theta}^2}{2\mu_{\theta} - \gamma_{\theta} C_{\theta}^2}.
		\end{align*}
It follows that
\begin{align*}
 \limsup_{k \to \infty}   a_{k} &  \leq  \frac{1}{2\mu_x} \gamma_{x} M^2  +
	 \frac{1}{2}\frac{1}{\mu^2_x}L_{\theta}^2 \frac{ \gamma_\theta
		\nu_{\theta}^2}{2\mu_{\theta} - \gamma_{\theta} C_{\theta}^2}.
		\end{align*}
\item[(ii)] $f$ is convex:
From \eqref{eq:convex_constant_a}, for $\gamma_{x,k} := \gamma_x$, we have the following:
\begin{align*}
  \gamma_{x} \mathbb{E}[f(x^{k};\theta^{*}) - f(x^{*};\theta^{*})  ]
 &\leq
      a_k -a_{k+1}+ \frac{1}{2} \gamma_{x}^2 M^2 - \gamma_{x}
\mathbb{E}[(x^{k}-x^{*})^T (\nabla_x f(x^{k};\theta^{k}) - \nabla_x f(x^{k};\theta^{*}) ) ]\\
&\leq a_k -a_{k+1}+ \frac{1}{2} \gamma_{x}^2 M^2 +
\frac{1}{2}\gamma_{x}^{2-\tau} \mathbb{E}[\|x^{k}-x^{*}\|^2] \\
&\quad\ +\frac{1}{2}\gamma_{x}^{\tau}
\mathbb{E}[\|\nabla_x f(x^{k};\theta^{k}) - \nabla_x f(x^{k};\theta^{*}) \|^2 ] \\
\notag    &\leq a_k -a_{k+1}+ \frac{1}{2} \gamma_{x}^2 M^2 + \frac{1}{2}\gamma_{x}^{2-\tau} M_x^2 +\frac{1}{2} \gamma_{x}^{\tau} L_{\theta}^2\mathbb{E}[\|\theta^{k} -  \theta^{*} \|^2],
\end{align*}
where $0<\tau<1$.
It follows that
\begin{align*}
\gamma_{x}  \limsup_{k \to \infty} \mathbb{E}[f(x^{k};\theta^{*}) - f(x^{*};\theta^{*})  ]    &\leq \limsup_{k \to \infty}  a_k - \limsup_{k \to \infty}  a_{k+1}+ \frac{1}{2} \gamma_{x}^2 M^2 + \frac{1}{2}\gamma_{x}^{2-  \tau } M_x^2 \\
&\quad\ +\frac{1}{2}\gamma_{x}^{\tau} L_{\theta}^2\limsup_{k \to \infty} \mathbb{E}[\|\theta^{k} -  \theta^{*} \|^2] \\
\notag    &\leq  \frac{1}{2} \gamma_{x}^2 M^2 + \frac{1}{2}\gamma_{x}^{2-\tau} M_x^2 +\frac{1}{2}\gamma_{x}^{\tau} L_{\theta}^2 \frac{ \gamma_{ \theta}
		\nu_{\theta}^2}{2\mu_{\theta} - \gamma_{ \theta} C_{\theta}^2}.
\end{align*}
It follows that
\begin{align*}
 \limsup_{k \to \infty} \mathbb{E}[f(x^{k};\theta^{*}) - f(x^{*};\theta^{*})  ] &  \leq  \frac{1}{2} \gamma_{x} M^2 + \frac{1}{2}\gamma_{x}^{1-\tau}  M_x^2 +\frac{1}{2} \gamma_{x}^{\tau-1} L_{\theta}^2 \frac{ \gamma_{ \theta}
		\nu_{\theta}^2}{2\mu_{\theta} - \gamma_{ \theta} C_{\theta}^2}.
\end{align*}
{By the Lipschitz continuity of $f(x;\theta)$ in $\theta$ (A\ref{assump:stoch_monotone_optim_error}(i)), H$\ddot{\textrm{o}}$lder's inequality  and
	\eqref{eq:theta_bound},  we have
\begin{align*}
 \limsup_{k \to \infty}\left|\mathbb{E}[ f(x^k;\theta^{k}) - f(x^k;\theta^{*})  ]\right|
\notag &\leq    D_{\theta}\limsup_{k \to \infty}\mathbb{E}[\|\theta^{k} - \theta^{*} \|]  \\
\notag &\leq    D_{\theta}\limsup_{k \to \infty}\sqrt{\mathbb{E}[\|\theta^{k} - \theta^{*} \|^2]}\\
\notag &=    D_{\theta}\sqrt{\limsup_{k \to \infty}\mathbb{E}[\|\theta^{k} - \theta^{*} \|^2]}\\
&\leq D_{\theta}\sqrt{\frac{ \gamma_{ \theta}
		\nu_{\theta}^2}{2\mu_{\theta} - \gamma_{ \theta} C_{\theta}^2}}.
\end{align*}
Therefore,
\begin{align*}
 \limsup_{k \to \infty}\left|\mathbb{E}[ f(x^k;\theta^{k}) - f(x^*;\theta^{*})  ]\right| &\leq  \limsup_{k \to \infty}\left|\mathbb{E}[ f(x^k;\theta^{k}) - f(x^k;\theta^{*})  ]\right| +
 \limsup_{k \to \infty} \left|\mathbb{E}[f(x^{k};\theta^{*}) - f(x^{*};\theta^{*})  ]\right|\\
 & \leq\frac{1}{2} \gamma_{x} M^2 + \frac{1}{2}\gamma_{x}^{1-\tau}  M_x^2 +\frac{1}{2} \gamma_{x}^{\tau-1} L_{\theta}^2 \frac{ \gamma_{ \theta}
		\nu_{\theta}^2}{2\mu_{\theta} - \gamma_{ \theta} C_{\theta}^2} +D_{\theta}\sqrt{\frac{ \gamma_{ \theta}
		\nu_{\theta}^2}{2\mu_{\theta} - \gamma_{ \theta} C_{\theta}^2}}.
 \end{align*}}
\end{enumerate}
\end{proof}

\subsection{Regret analysis} \label{sec:II.IV}
In this subsection, we consider the problem of online convex
programming in a misspecified regime. In online convex programming
problems, a decision-maker sees an infinite sequence of functions $c_1, c_2, \hdots$ where
each function is convex in its argument over a closed and convex set
$X$. An
{\em online} convex programming algorithm~\cite{zinkevich03} generates
an iterate $x_k$ at each time epoch $k$ and a metric of performance  is the
{\em regret} associated with not using an offline algorithm that
considers the following problem:
$ \min_{x \in X} \quad \sum_{k=1}^K c_k(x). $
If an online convex algorithm generates iterates $x_1, x_2, \hdots, $
then the regret $R_K$ is defined as
$$ R_K \triangleq \left[\sum_{k=1}^K c_k(x_k) - \min_{x \in X} \,
\sum_{k=1}^K c_k(x)\right].$$
A desirable feature of an online convex programming algorithm is that it
is characterized by sublinear regret~\cite{zinkevich03}. \\
	
Often the model prescribed in an online optimization regime can be
refined to a setting where the functions are related across time rather
than being a sequence of unrelated functions. We consider one particular
regime in which the decision-maker sees a sequence of
functions given by $f(\bullet;\theta_1)$, $f(\bullet;\theta_2), \hdots.$
Furthermore, neither the values $\theta_1, \theta_2, \hdots$ are known
to the decision-maker nor is the fact that $\theta_k \to \theta^*$ as $k \to
\infty$. As earlier, we assume that the decision-maker has to furnish $x_1,
x_2, \hdots$ and we define the {\em misspecified regret} after $K$ steps associated with our generated sequence $\{x^k,\theta^k\}$ as follows:
\begin{align*}
  R_K \triangleq \mathbb{E}\left[\sum_{k=1}^K f(x^k;\theta^k,\xi) -  K
  f(x^*;\theta^*,\xi)\right].
\end{align*}
Unlike the traditional definition, we consider the departure from
$f(x^*,\theta^*)$ and should be contrasted with the standard regret metric given by
$R^{\rm std}_K \triangleq \mathbb{E}\left[\sum_{k=1}^K f(x^k;\theta^*,\xi) -  K
  f(x^*;\theta^*,\xi)\right]$. For
  purposes of deriving analytical bounds, we define the following variant of
  regret as follows:
\begin{align*}
  \widehat{R}_K \triangleq \mathbb{E}\left[\sum_{k=1}^K f(x^k;\theta^k,\xi)
  -  \sum_{k=1}^K f( {y_K^*};\theta^k,\xi)\right],
\mbox{ where } y_K^* \triangleq \displaystyle{\argmin_{y\in X} \mathbb{E}\left[\sum_{k=1}^K
	f( y;\theta^k,\xi)\right]}.
\end{align*}
Next, we provide  a rate
of decay of the upper bound of average regret. \vspace{0.2in} \\
\begin{thm} [{\bf Regret under convexity of $f$}] \label{thm:regret}
Suppose (A\ref{assump:filtration}) and (A\ref{assump:stoch_monotone_optim_error}) hold.
Suppose $\mathbb{E}[\|x-x^{*}\|^2]\leq M_x^2$, $\mathbb{E}[\|\nabla_x f(x;\theta)+ w^k \|^2]\leq M^2$  and $\mathbb{E}[\|\nabla_{\theta} g(\theta)+ v^k \|^2]\leq M_{\theta}^2$  for all $x\in X$ and $\theta\in\Theta$.
Suppose $\mathbb{E}[\|\nabla_x f(y^*_K;\theta^{k})+ u^k \|^2]\leq M^2$, where
$u^k \triangleq   \mathbb{E}[\nabla_x f(y^*_K;\theta^k,\xi) ]-\nabla_x
f( y^*_K;\theta^k)$.
 Let $\{x^k,\theta^k\}$ be computed via Algorithm \ref{alg:stoch_strongly_optim}.
Suppose
 $\gamma_{k,x}=k^{-\alpha}$ with $0.5\leq\alpha<1$,
and $\gamma_{\theta,k}=\lambda_{\theta}/k$ with
$\lambda_{\theta}>1/(2\mu_{\theta})$. If $0<\beta <1$, then the following
holds:
\begin{align*}
\frac{{ R_K }}{K} & \leq
 \frac{M_x^2 K^{\alpha-1}}{2} +
  \frac{M^2(K^{1-\alpha}- \alpha )}{2(1-\alpha)K} +   \frac{D_{\theta} \sqrt{Q_{\theta}(\lambda_{\theta})}(2\sqrt{K}-1) }{K} { + \frac{  M_x^2}{2 K^{\beta}}
+\frac{L_{\theta}^2Q_{\theta}(\lambda_{\theta})(\ln(K) +1) }{2K^{1-\beta}},}
\end{align*}
where $\beta>0$. Furthermore,
\begin{align*}
{\limsup_{K \to \infty}\frac{R(K) }{K}\leq 0.}
\end{align*}

\end{thm}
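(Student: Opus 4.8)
The plan is to bound $R_K$ directly against the comparator $f(x^*;\theta^*)$ (rather than routing through $\widehat R_K$ and $y_K^*$), after first passing from the sampled objective to its expected-value counterpart: using $\mathbb{E}[w^k\mid\mathcal{F}_k]=0$ together with the tower property gives $\mathbb{E}[f(x^k;\theta^k,\xi)]=\mathbb{E}[f(x^k;\theta^k)]$, so that
\[
R_K=\sum_{k=1}^K\mathbb{E}[f(x^k;\theta^k)-f(x^k;\theta^*)]+\sum_{k=1}^K\mathbb{E}[f(x^k;\theta^*)-f(x^*;\theta^*)].
\]
First I would control the \emph{learning} group. By the Lipschitz continuity of $f(x;\theta)$ in $\theta$ with constant $D_\theta$ (A\ref{assump:stoch_monotone_optim_error}), H\"older's inequality, and the learning-rate estimate \eqref{eq:error_theta_Q}, each term satisfies $|\mathbb{E}[f(x^k;\theta^k)-f(x^k;\theta^*)]|\le D_\theta\sqrt{\mathbb{E}[\|\theta^k-\theta^*\|^2]}\le D_\theta\sqrt{Q_\theta(\lambda_\theta)/k}$; summing and using $\sum_{k=1}^K k^{-1/2}\le 2\sqrt K-1$ yields exactly the $D_\theta\sqrt{Q_\theta(\lambda_\theta)}(2\sqrt K-1)$ contribution.

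The heart of the argument is the \emph{optimization} group. I would begin from the per-step recursion \eqref{alg:stoch_grad_grad_monotone_constant_a}, invoke the gradient inequality \eqref{alg:stoch_grad_grad_monotone_constant_a_term2}, and rearrange to
\[
\gamma_{x,k}\mathbb{E}[f(x^k;\theta^*)-f(x^*;\theta^*)]\le a_k-a_{k+1}+\tfrac12\gamma_{x,k}^2 M^2-\gamma_{x,k}\,\mathbb{E}[(x^k-x^*)^T(\nabla_x f(x^k;\theta^k)-\nabla_x f(x^k;\theta^*))].
\]
Dividing by $\gamma_{x,k}=k^{-\alpha}$ and summing over $1\le k\le K$ splits the result into three pieces. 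The telescoping-type piece $\sum_k(a_k-a_{k+1})/\gamma_{x,k}$ I would handle by summation by parts: since $1/\gamma_{x,k}=k^\alpha$ and $a_k\le \tfrac12 M_x^2$, Abel summation gives $a_1+\sum_{k=2}^K a_k\bigl(k^\alpha-(k-1)^\alpha\bigr)\le \tfrac12 M_x^2 K^\alpha$ (the $-a_{K+1}K^\alpha$ term is dropped), which after division by $K$ is the first stated term $\tfrac12 M_x^2 K^{\alpha-1}$. The noise piece $\tfrac{M^2}{2}\sum_k\gamma_{x,k}=\tfrac{M^2}{2}\sum_k k^{-\alpha}\le\tfrac{M^2}{2}\,\tfrac{K^{1-\alpha}-\alpha}{1-\alpha}$ yields the second stated term.

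The cross piece is where the balancing exponent $\beta$ enters. Using Cauchy--Schwarz and the Lipschitz continuity of $\nabla_x f(x;\theta)$ in $\theta$ with constant $L_\theta$ (A1-2), the per-step cross term is at most $L_\theta\,\mathbb{E}[\|x^k-x^*\|\,\|\theta^k-\theta^*\|]$; I would then apply the weighted Young inequality $L_\theta ab\le \tfrac{c}{2}a^2+\tfrac{L_\theta^2}{2c}b^2$ with the \emph{horizon-dependent} constant $c\equiv K^{-\beta}$, $0<\beta<1$. Bounding $\mathbb{E}[\|x^k-x^*\|^2]\le M_x^2$ and invoking \eqref{eq:error_theta_Q} once more gives a per-step bound $\tfrac12 K^{-\beta}M_x^2+\tfrac12 L_\theta^2 K^{\beta}Q_\theta(\lambda_\theta)/k$; summing with $\sum_{k=1}^K k^{-1}\le \ln K+1$ produces $\tfrac12 M_x^2 K^{1-\beta}+\tfrac12 L_\theta^2 Q_\theta(\lambda_\theta)K^{\beta}(\ln K+1)$, i.e.\ the last two terms after dividing by $K$. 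Collecting the learning and optimization groups and dividing by $K$ delivers the displayed bound, and the $\limsup$ claim follows termwise: since $\alpha\in[1/2,1)$ and $\beta\in(0,1)$, every exponent ($\alpha-1$, $-\alpha$, $-1/2$, $-\beta$, and $-(1-\beta)$, the last beating even the $\ln K$ factor) is strictly negative. The main obstacle I anticipate is the bookkeeping in the summation-by-parts step and, more delicately, the correct choice $c\equiv K^{-\beta}$ of the Young parameter: it must be held constant in $k$ yet scale with the horizon so that the two learning-induced terms balance into the stated $K^{-\beta}$ and $K^{-(1-\beta)}$ rates rather than spoiling the overall $o(1)$ decay of $R_K/K$.
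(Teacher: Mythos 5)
Your proposal is correct, and the final bound it yields is term-for-term identical to the stated one, but the route is genuinely different from the paper's. The paper works through the surrogate regret $\widehat{R}_K$ defined against the hindsight comparator $y_K^*$: it invokes Zinkevich's online-gradient-descent theorem to get $\widehat{R}_K \leq \tfrac{M_x^2}{2\gamma_{K,x}} + \tfrac{M^2}{2}\sum_k \gamma_{k,x}$, and then bounds $|R_K - \widehat{R}_K|$ by splitting it into a learning term at $y_K^*$ (giving the $2\sqrt{K}-1$ contribution) and the gap $\mathbb{E}[f(y_K^*;\theta^*)-f(x^*;\theta^*)]$, the latter controlled via the optimality condition for $y_K^*$ and a Young inequality with the horizon-dependent parameter $\delta_K = K^{-\beta}$. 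You instead decompose $R_K$ directly into a learning group along the trajectory, $\sum_k \mathbb{E}[f(x^k;\theta^k)-f(x^k;\theta^*)]$, and an optimization group, $\sum_k \mathbb{E}[f(x^k;\theta^*)-f(x^*;\theta^*)]$, reusing the paper's own per-step recursion \eqref{alg:stoch_grad_grad_monotone_constant_a} and gradient inequality, with Abel summation handling the telescoping term $\sum_k (a_k - a_{k+1}) k^{\alpha} \leq \tfrac12 M_x^2 K^{\alpha}$ (valid since the increments $k^{\alpha}-(k-1)^{\alpha}$ are nonnegative and $a_{K+1}K^{\alpha}\geq 0$ is dropped) and the same $c = K^{-\beta}$ Young trick absorbing the cross term. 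Your approach buys two things: it dispenses entirely with the auxiliary point $y_K^*$, so the theorem's extra hypothesis $\mathbb{E}[\|\nabla_x f(y^*_K;\theta^{k})+ u^k \|^2]\leq M^2$ becomes unnecessary, and it is self-contained, since the Abel-summation step reproduces internally exactly what Zinkevich's proof supplies as a black box. What it gives up is the intermediate bound on $\widehat{R}_K$ itself, which has independent interest as a comparator-based (best-fixed-action-in-hindsight) regret guarantee. One small inaccuracy worth fixing: the identity $\mathbb{E}[f(x^k;\theta^k,\xi)]=\mathbb{E}[f(x^k;\theta^k)]$ does not follow from $\mathbb{E}[w^k\mid\mathcal{F}_k]=0$, which concerns gradient noise; it follows simply from the tower property and the independence of the fresh sample $\xi$ from $(x^k,\theta^k)$, together with $f(x;\theta)=\mathbb{E}_\xi[f(x;\theta,\xi)]$ (the paper makes the same interchange implicitly).
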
\vspace{0.1in}
\begin{proof}
By using the proof in Theorem 1 in \cite{zinkevich03} (cf. Theorem \ref{thm:zinkevich_regret} in Appendix \ref{appendix:zinkevich}), we obtain that $\widehat{R}_K/K$ is bounded as
		follows:
\begin{align*}
  \widehat{R}_K \leq \frac{M_x^2}{2 \gamma_{K,x}} + \frac{M^2}{2} \sum_{k=1}^K \gamma_{k,x}.
\end{align*}
Next, if $\gamma_{k,x}=k^{-\alpha}$ with $0.5\leq\alpha<1$, then we have
the following bound on $\sum_{k=1}^K \gamma_{k,x}$:
\begin{align*}
  \sum_{k=1}^K \gamma_{k,x} = \sum_{k=1}^K k^{-\alpha} \leq 1+\int_{1}^K
  x^{-\alpha} dx = \frac{1}{1-\alpha} (K^{1-\alpha}-{\alpha}).
\end{align*}
Therefore, we obtain the following bound on $\widehat R_K$:
\begin{align} \label{eq:R_1}
  \widehat{R}_K \leq \frac{M_x^2 K^{\alpha}}{2} +
  \frac{M^2(K^{1-\alpha}-{\alpha})}{2(1-\alpha)}.
\end{align}
Recall that the difference between the real {regret} and {misspecified
	regret} is given by the following:{
\begin{align*}
\begin{aligned}
  \left|R_K-\widehat{R}_K\right| &= \left|\mathbb{E}\left[\sum_{k=1}^K f(
		  {y^*_{K}};\theta^k,\xi) - K f(x^*;\theta^*,\xi)\right]\right|\\
  & \leq   \left|\mathbb{E}\left[\sum_{k=1}^K f( {y^*_{K}};\theta^k,\xi) - K
  f(y^*_{K};\theta^*,\xi)\right]\right| + \left|\mathbb{E}\left[K\left( f(
			  {y^*_{K}};\theta^*,\xi) -   f(x^*;\theta^*,\xi)\right)\right]\right|,
\end{aligned}
\end{align*}
or
\begin{align}
\begin{aligned} \label{eq:regret_difference}
  \frac{\left|R_K-\widehat{R}_K\right|}{K}   & \leq
  \underbrace{\left|\mathbb{E}\left[\frac{1}{K}\sum_{k=1}^K f(
		  {y^*_{{K}}};\theta^k,\xi)
	  -   f(y^*_{{K}};\theta^*,\xi)\right]\right|}_{\textbf{Term 1}} +
	  \underbrace{\left|\mathbb{E}\left[ f( y^*_{{K}};\theta^*,\xi) -
			  f(x^*;\theta^*,\xi)\right]\right|}_{\textbf{Term 2}}.
\end{aligned}
\end{align}}
We proceed to derive bounds for {\bf Terms} 1 and 2.
{\bf Term 1} in \eqref{eq:regret_difference} may be bounded as follows:
\begin{align*}
\begin{aligned}
 \left|\mathbb{E}\left[\frac{1}{K}\sum_{k=1}^K f( {y^*_{K}};\theta^k,\xi) -
 f(y^*_{K};\theta^*,\xi)\right]\right| &\leq
 \frac{1}{K}\sum_{k=1}^K \mathbb{E}\left[\left|f( y^*_{{K}};\theta^k,\xi) -
 f(y_{{K}}^*;\theta^*,\xi)\right|\right] \\
 & \leq  \frac{D_{\theta}}{K}\sum_{k=1}^K \mathbb{E} [\|\theta^k  -    \theta^*\|] \\
 & \leq  \frac{D_{\theta}}{K}\sum_{k=1}^K
 \sqrt{\frac{Q_{\theta}(\lambda_{\theta})}{k}}.
\end{aligned}
\end{align*}
where the second and third inequalities follow from the Lipschitz continuity of
$\nabla f(y^*;\theta)$ in $\theta$
(A\ref{assump:stoch_monotone_optim_error}) and \eqref{eq:error_theta_Q}. Through some analysis, the
right hand side may be further bounded as follows:
	\begin{align}\label{eq:R2_1}
\begin{aligned}
\frac{D_{\theta}}{K}\sum_{k=1}^K
 \sqrt{\frac{Q_{\theta}(\lambda_{\theta})}{k}}
& \leq \frac{D_{\theta}{\sqrt{Q_{\theta}(\lambda_{\theta})}}}{K}\left({1+}\int_{1}^K
 {\frac{1}{\sqrt{x}}}dx\right)
 \leq
\frac{D_{\theta}{\sqrt{Q_{\theta}(\lambda_{\theta})}({2\sqrt{K}-1})}}{K}.
\end{aligned}
\end{align}
This implies that ${\bf Term~ 1}$ in
\eqref{eq:regret_difference} converges to zero as $K\to \infty$.  Next,
	we consider ${\bf Term~ 2}$ in \eqref{eq:regret_difference}.
{By the optimality condition for $y_K^*$,
	we have the following expression:
\begin{align}
\notag 0 &\geq
 \sum_{k=1}^K\mathbb{E}[(y_K^* -x^*)^T \nabla_x f( y_K^*;\theta^k,\xi)] \\
 & = \sum_{k=1}^K \mathbb{E}[(y_K^* -x^*)^T \nabla_x f(
		 y_K^*;\theta^*,\xi)]  + \sum_{k=1}^K \mathbb{E}[(y_K^*
			 -x^*)^T (\nabla_x f(y_K^*;\theta^k,\xi)-\nabla_xf( y_K^*;\theta^*,\xi))].
\label{eq:regret_term2_new}
\end{align}
Since $f(x;\theta)$ is convex in $x$ for every $\theta \in
\Theta$, we may leverage the gradient inequality.
\begin{align} \label{eq:regret_term3}
\notag \mathbb{E}[f(x^*;\theta^*,\xi)] & \geq
\mathbb{E}[f(y^*_K;\theta^*,\xi)] + \mathbb{E}[\nabla_x
f(y^*_K;\theta^*,\xi)^T(x^*-y^*_K)] \\
		\implies \mathbb{E}[(y_K^*-x^{*})^T  \nabla_x f(y_K^*;\theta^{*},\xi)]
      & \geq \mathbb{E}[f(y_K^*;\theta^{*},\xi) - f(x^{*};\theta^{*},\xi) ].
\end{align}
Combining \eqref{eq:regret_term2_new} and \eqref{eq:regret_term3}, we get
the following lower bound:
\begin{align*}
0 &\geq
 \sum_{k=1}^K \mathbb{E}[f(y_K^*;\theta^{*},\xi) - f(x^{*};\theta^{*},\xi) ] + \sum_{k=1}^K \mathbb{E}[(y_K^* -x^*)^T (\nabla_x
		 f(y_K^*;\theta^k,\xi)- \nabla_x f( y^*;\theta^*,\xi))].
\end{align*}
This allows for constructing the following bound on
$\sum_{k=1}^K \mathbb{E}[f(y_K^*;\theta^{*},\xi) - f(x^{*};\theta^{*},\xi)]$:
\begin{align}
\label{eq:regret_final}
  \sum_{k=1}^K \mathbb{E}[f(y_K^*;\theta^{*},\xi) - f(x^{*};\theta^{*},\xi) ]
 &\leq    -
 \sum_{k=1}^K \mathbb{E}[(y_K^* -x^*)^T (\nabla_x
		 f(y_K^*;\theta^k,\xi)- \nabla_x f( y_K^*;\theta^*,\xi))] \notag \\
 &\leq
\frac{1}{2}\sum_{k=1}^K\delta_K \mathbb{E}[\|y_K^{*}-x^{*}\|^2] +\frac{1}{2}\sum_{k=1}^K \frac{1}{\delta_K}
\mathbb{E}[\|\nabla_x f(y_K^{*};\theta^{k},\xi) - \nabla_x
f(y_K^{*};\theta^{*},\xi) \|^2 ] \notag \\
     &\leq
	 \frac{1}{2}\sum_{k=1}^K\delta_K M_x^2
	 +\frac{1}{2}\sum_{k=1}^K \frac{1}{\delta_K} L_{\theta}^2\mathbb{E}[\|\theta^{k} -
	 \theta^{*} \|^2] \notag \\
    &\leq  \frac{1}{2} \sum_{k=1}^K\delta_K  M_x^2 +\frac{1}{2}\sum_{k=1}^K \frac{1}{\delta_K} \frac{L_{\theta}^2Q_{\theta}(\lambda_{\theta})}{k},
\end{align}
where $\delta_{K}=K^{-\beta}$ with $0<\beta<1$ and the last inequality follows from  \eqref{eq:error_theta_Q}.
{Note that
$
   \sum_{k=1}^K \frac{1}{k}   \leq   \ln(K) +1  .
$
\begin{align}
\begin{aligned} \label{eq:R2_2}
 \mbox{Thus, } \left|\mathbb{E}\left[ f( y^*_{{K}};\theta^*,\xi) -
			  f(x^*;\theta^*,\xi)\right]\right|  &= \mathbb{E}[f(y_K^*;\theta^{*}) - f(x^{*};\theta^{*}) ] \\& \leq \frac{  M_x^2}{2 K^{\beta}}  +\frac{\sum_{k=1}^K \frac{L_{\theta}^2Q_{\theta}(\lambda_{\theta})}{k}}{2 K\delta_{K}}\\
&   \leq \frac{  M_x^2}{2 K^{\beta}}
+\frac{L_{\theta}^2Q_{\theta}(\lambda_{\theta})(\ln(K) +1) }{2K^{1-\beta}}.
\end{aligned}
\end{align}
Combining \eqref{eq:R_1}, \eqref{eq:regret_difference}, \eqref{eq:R2_1}, and \eqref{eq:R2_2}, we have that ${R_K/K}$ can be bounded as follows:
\begin{align*}
{\frac{R_K}{K} } &\leq  {\frac{ \widehat{R}_K }{K} } + \frac{R_K-\widehat{R}_K}{K}
 \leq  {\frac{ \widehat{R}_K }{K} } + \frac{\left|R_K-\widehat{R}_K\right|}{K} \\
& \leq
  \frac{M_x^2 K^{\alpha-1}}{2} +
  \frac{M^2(K^{1-\alpha}- \alpha )}{2(1-\alpha)K} +  \frac{D_{\theta}{\sqrt{Q_{\theta}(\lambda_{\theta})}( 2\sqrt{K} -1)}}{K} + \frac{  M_x^2}{2 K^{\beta}}
+\frac{L_{\theta}^2Q_{\theta}(\lambda_{\theta})(\ln(K) +1) }{2K^{1-\beta}}.
\end{align*}}
Furthermore, this implies that the limit superior of the average regret is nonpositive.}
\end{proof}

{\bf Remark:} In effect, in the context of learning and optimization, the averaging
approach leads to a complexity bound given loosely by
$$\Oscr\left(\frac{a}{K^{1-\alpha}}+\frac{b}{K^{\alpha}}+\frac{d}{K^{\beta}}+\underbrace{\frac{c_{\theta}}{\sqrt{K}}{+\frac{e_{\theta}
		\ln(K) }{K^{1-\beta}}}}_{\scriptsize \mbox{contribution from
		learning}}\right),$$
where $a, b, c_{\theta}, d, e_{\theta}$ are suitably defined.  If $\theta^*$
is available,  then $c_{\theta}, e_{\theta} =
0$. Furthermore, by setting $\alpha = 0.5$ and $\beta = 0.5$, this leads to the
bound of {$\Oscr(\ln K/\sqrt{K})$}, which is  a degradation as the result of learning $\theta^*$.
\qed

\section{Stochastic variational inequality problems with imperfect information}
\label{sec:III}
Several shortcomings exist in the optimization based formulation
represented by {$({\cal P}^o_x(\theta^*))$}. First, the misspecification arises
entirely in the objectives while the constraints are known with
certainty. Second, the underlying problem need not be an optimization
problem, but could instead be captured by a variational inequality
problem. Such problems~\cite{Pang03I} can capture a range of
problems including economic equilibrium problems, traffic equilibrium
problems, and convex Nash games. In fact, variational inequality
problems can effectively capture optimization problems with misspecified
constraints.
This motivates the consideration of the
misspecified stochastic variational inequality problem
{$({\cal P}^v_x(\theta^*))$} where $\theta^*$ can be learnt through the solution
of the following problem:
\begin{align} \label{problem_stoch_optim_theta_2}
\tag{${\cal L}^v_\theta$}
    (\vartheta - \theta)^T\mathbb{E}[G(\theta;\eta)] \geq 0, \qquad \forall \vartheta \in
	\Theta,
\end{align}
where $G: \theta \times \Real^p \to \Real^m$, and $\Theta$ and $\eta$
abide by the previous specifications. In the majority of problem
settings, $G(\theta;\theta) \triangleq \nabla_\theta g(\theta;\eta)$ but
we employ the variational structure to introduce generality.  In this section, we extend the results of the
previous section to this regime.  Specifically, we develop the convergence theory under settings where
the variational map $F$ is
both strongly monotone and merely monotone in $x$ for
every $\theta \in \Theta$ in Section~\ref{sec:III.II}  and provide rate
statements in Section~\ref{sec:III.III}.

\subsection{Almost-sure convergence} \label{sec:III.II}
As in Section \ref{sec:II}, we propose a set of coupled
stochastic approximation schemes for computing $x^*$ and $\theta^*$.
Given $x^0 \in X$ and $\theta^0 \in \Theta$, the coupled SA schemes are
stated next:\vspace{0.2in}
\begin{alg} [{\bf Coupled SA schemes for stochastic variational inequality
	problems}]   \label{alg:VI_stoch_grad_grad_strongly}

\noindent {\bf Step 0.} Given $x_0 \in X, \theta_0 \in \Theta$ and sequences $\{\gamma_{k,x},
	  \gamma_{k,\theta}\}$, $k := 0$

\noindent {\bf Step 1.}
\begin{align}
\tag{Comp$_k$}       x^{k+1} & := \Pi_{X}  \left(x^{k}-\gamma_{k,x}(F(x^k;\theta^k) + w^k)  \right) \\
   \tag{Learn$_k$} \theta^{k+1} &: = \Pi_{\Theta}  \left(  \theta^{k} -
			\gamma_{k,\theta} ( G(\theta^{k}) + v^k )
			\right),
\end{align}
where $w^k \triangleq F(x^k;\theta^k,\xi^k) - F(x^k;\theta^k)$ and $v^k \triangleq G(\theta^{k};\eta^k) - G(\theta^{k})$.

\noindent {\bf Step 2.} If $k > K$, stop; else $k:k+1$, go to Step. 1.
\end{alg}\vspace{0.2in}

We begin by stating an assumption similar to (A\ref{assump:stoch_strongly_optim}-1) on the
mappings $F$ and $G$.
\begin{assumption}[A1-3] \label{assump:VI_stoch_imperfect_strongly}
Suppose the following hold:
\begin{enumerate}
  \item[(i)] For every {$\theta \in \Theta$}, $F(x;\theta)$ is both strongly monotone and Lipschitz continuous in $x$ with constants $\mu_{x}$ and $L_{x}$, respectively.
  \item[(ii)] For every {$x \in X$}, $F(x;\theta)$ is Lipschitz continuous in $\theta$ with
  constant $L_{\theta}$.
   \item[(iii)] $G(\theta)$ is strongly monotone and Lipschitz continuous in $\theta$ with
    constants $\mu_{\theta}$ and $C_{\theta}$, respectively.
\end{enumerate}
\end{assumption}

Now, we can leverage the results in Section \ref{sec:II.II} to examine
the convergence properties for Algorithm
\ref{alg:VI_stoch_grad_grad_strongly}.\vspace{0.2in}
\begin{prop}[{\bf Almost-sure convergence under strong monotonicity of $F$}] \label{thm:VI_stoch_grad_grad_strongly}
Suppose (A1-3), (A\ref{assump:stoch_steplength_strongly_optim}-1) and (A\ref{assump:filtration}) hold.  Let $\{x^k,\theta^k\}$ be computed via Algorithm \ref{alg:VI_stoch_grad_grad_strongly}.
Then, $x^{k}\to x^{*}$ $a.s.$ and $\theta^{k} \to \theta^{*}$ $a.s.$ as
$k\rightarrow\infty$,  where $x^*$ is the unique solution to \eqref{problem_stoch_optim_1}
and $\theta^*$ is the unique solution to
\eqref{problem_stoch_optim_theta_2}.
\end{prop}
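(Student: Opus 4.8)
The plan is to follow the proof of Proposition \ref{thm:stoch_strongly_optim} essentially verbatim, since that argument never exploited convexity of $f$ or $g$ as such, but only the monotonicity and Lipschitz structure of the maps $\nabla_x f(\cdot;\theta)$ and $\nabla_\theta g$. Under (A1-3), the map $F(\cdot;\theta)$ plays exactly the role of $\nabla_x f(\cdot;\theta)$ (strongly monotone with constant $\mu_x$ and Lipschitz with constant $L_x$ in $x$, Lipschitz with constant $L_\theta$ in $\theta$), while $G$ plays the role of $\nabla_\theta g$ (strongly monotone with constant $\mu_\theta$, Lipschitz with constant $C_\theta$). Strong monotonicity of $F(\cdot;\theta^*)$ and of $G$ guarantees that \eqref{problem_stoch_optim_1} and \eqref{problem_stoch_optim_theta_2} each admit a unique solution, so $x^*$ and $\theta^*$ are well defined.

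First I would record the fixed-point characterizations: since $x^*$ solves the VI, the standard projection identity gives $x^* = \Pi_X(x^* - \gamma_{k,x} F(x^*;\theta^*))$ for every $\gamma_{k,x}>0$, and likewise $\theta^* = \Pi_\Theta(\theta^* - \gamma_{k,\theta} G(\theta^*))$. Starting from (Comp$_k$) and using nonexpansivity of $\Pi_X$, I would expand $\|x^{k+1}-x^*\|^2$, add and subtract $\gamma_{k,x} F(x^*;\theta^k)$, and take the conditional expectation using $\mathbb{E}[w^k\mid\mathcal{F}_k]=0$. This produces the same three-term decomposition as in \eqref{eq:stoch_strongly_optim_exp}: Term 1 is controlled by the contraction estimate of Lemma \ref{lem:strongly_Lipschitz} applied to $F(\cdot;\theta^k)$, giving $(1-2\gamma_{k,x}\mu_x+\gamma_{k,x}^2 L_x^2)\|x^k-x^*\|^2$; Term 2 by Lipschitz continuity of $F$ in $\theta$, giving $\gamma_{k,x}^2 L_\theta^2\|\theta^k-\theta^*\|^2$; and Term 3 by Cauchy--Schwarz, Lemma \ref{lem:strongly_Lipschitz}, the triangle inequality, and $2a^{T}b\le\|a\|^2+\|b\|^2$, yielding $\gamma_{k,x}\mu_x\|x^k-x^*\|^2+\gamma_{k,x}(L_\theta^2/\mu_x)\|\theta^k-\theta^*\|^2$. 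These combine into the analogue of \eqref{eq:stoch_strongly_optim_exp_final}.

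In parallel, for the learning iterate I would use (Learn$_k$), nonexpansivity of $\Pi_\Theta$, the fixed-point relation for $\theta^*$, the strong monotonicity and Lipschitz continuity of $G$ (again through Lemma \ref{lem:strongly_Lipschitz}), and $\mathbb{E}[v^k\mid\mathcal{F}_k]=0$ to obtain $\mathbb{E}[\|\theta^{k+1}-\theta^*\|^2\mid\mathcal{F}_k]\le q_{k,\theta}^2\|\theta^k-\theta^*\|^2+\gamma_{k,\theta}^2\nu_\theta^2$, with $q_{k,\theta}=\sqrt{1-2\gamma_{k,\theta}\mu_\theta+\gamma_{k,\theta}^2 C_\theta^2}$, exactly as in \eqref{eq:stoch_strongly_optim_exp_final_theta}. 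Adding the two recursions and inserting the coupling $\gamma_{k,\theta}=\gamma_{k,x}L_\theta^2/(\mu_x\mu_\theta)$ from (A\ref{assump:stoch_steplength_strongly_optim}-1) collapses the bound to the form $(1-\alpha\gamma_{k,x}+\beta\gamma_{k,x}^2)(\|x^k-x^*\|^2+\|\theta^k-\theta^*\|^2)+\delta\gamma_{k,x}^2$ with the same $\alpha,\beta,\delta$ defined there. Finally, I would verify via (A\ref{assump:stoch_steplength_strongly_optim}-1) that $\sum_k(\alpha\gamma_{k,x}-\beta\gamma_{k,x}^2)=\infty$, $\sum_k\delta\gamma_{k,x}^2<\infty$, and $\delta\gamma_{k,x}^2/(\alpha\gamma_{k,x}-\beta\gamma_{k,x}^2)\to0$, and invoke Lemma \ref{lem:supermartingale} to conclude $\|x^k-x^*\|^2+\|\theta^k-\theta^*\|^2\to0$ a.s., whence $x^k\to x^*$ and $\theta^k\to\theta^*$ a.s.

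Since the algebra is identical to the optimization case, there is no genuinely new obstacle; the only points requiring care are (i) justifying that the projection fixed-point characterizations are equivalent to the VI conditions, a standard property of variational inequalities, so that the gradient-based identities of Proposition \ref{thm:stoch_strongly_optim} carry over with $F$ replacing $\nabla_x f$ and $G$ replacing $\nabla_\theta g$; and (ii) confirming that nowhere in that earlier proof was the gradient origin of $\nabla_x f$ used beyond strong monotonicity and the two Lipschitz estimates, which is indeed the case since Terms 1--3 were bounded purely through Lemma \ref{lem:strongly_Lipschitz} and Lipschitz bounds. Hence the strong-convexity proof transfers directly to the strongly monotone variational setting.
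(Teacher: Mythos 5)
Your proposal is correct and is exactly the paper's own argument: the paper's proof simply records the fixed-point relations $x^*=\Pi_X(x^*-\gamma_{k,x}F(x^*;\theta^*))$, $\theta^*=\Pi_\Theta(\theta^*-\gamma_{k,\theta}G(\theta^*))$ and invokes the proof of Proposition \ref{thm:stoch_strongly_optim} with $\nabla_x f$ and $\nabla_\theta g$ replaced by $F$ and $G$. Your more detailed verification that the earlier argument used only strong monotonicity, the two Lipschitz properties, and the noise conditions (never the gradient origin of the maps) is precisely the justification the paper leaves implicit.
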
\vspace{0.2in}

\begin{proof}
Note that $x^* = \Pi_{X} ( x^{*} - \gamma_{k,x}
		F(x^*;\theta^*))$ and
$\theta^{*} = \Pi_{\Theta} ( \theta^{*} - \gamma_{k,\theta}
		   G(\theta^{*}))$.
If we replace
$\nabla_x f$ and $\nabla_{\theta} g$ by $F$ and $G$ in Proposition \ref{thm:stoch_strongly_optim}, respectively,
then by the proof of Proposition \ref{thm:stoch_strongly_optim}, we get
$x^{k}\to x^{*}$ $a.s.$ and $\theta^{k} \to \theta^{*}$ $a.s.$ as $k\rightarrow\infty$.
\end{proof}

Next, we weaken the rather stringent requirement of strong monotonicity
of the map by using an iterative Tikhonov regularization, which can be stated as
follows.\vspace{0.2in}
\begin{alg} [{\bf Coupled regularized SA schemes for stochastic variational inequality
	problems}]   \label{alg:VI_stoch_grad_grad_monotone}

\noindent {\bf Step 0.} Given $x_0 \in X, \theta_0 \in \Theta$ and sequences $\{\gamma_{k,x},
	  \gamma_{k,\theta}\}$, $k := 0$

\noindent {\bf Step 1.}
\begin{align}
\tag{Comp$_k$}       x^{k+1} & := \Pi_{X}
\left(x^{k}-\gamma_{k,x}(F(x^k;\theta^k)+\epsilon_k x^k + w^k)  \right) \\
   \tag{Learn$_k$} \theta^{k+1} &: = \Pi_{\Theta}  \left(  \theta^{k} -
			\gamma_{k,\theta} ( G(\theta^{k}) + v^k )
			\right),
\end{align}
where $w^k \triangleq F(x^k;\theta^k,\xi^k) - F(x^k;\theta^k)$ and $v^k \triangleq G(\theta^{k};\eta^k) - G(\theta^{k})$.

\noindent {\bf Step 2.} If $k > K$, stop; else $k:=k+1$, go to Step. 1.

\end{alg}\vspace{0.2in}
Unlike in standard Tikhonov regularization, such a scheme updates the
regularization parameter $\epsilon_k$ after every step. Tikhonov
regularization and its iterative counterpart has a long
history~\cite{Polyak87} while iterative regularization schemes have seen
relatively less study in the context of variational inequality problems
(cf.~\cite{konnov07equilibrium,kannan10online}). Of note is the
extension to distributed schemes to accommodate  monotone Cartesian
stochastic variational inequality problems~\cite{koshal13regularized}.
We employ such techniques in developing single-loop stochastic
approximation schemes in the context of learning and optimization.  The
following assumptions will be made on both the decision variable and
parameter.
\begin{assumption}[A1-4] \label{assump:VI_stoch_imperfect_monotone}
Suppose the following holds in addition to (A1-3 (ii)) and (A1-3 (iii)).
\begin{enumerate}
  \item[(i)] For every $\theta \in \Theta$, $F(x;\theta)$ is monotone in $x$ and Lipschitz continuous in $x$ with constant $L_{x}$.
\end{enumerate}
\end{assumption}
In iterative Tikhonov regularization, one cannot independently choose
$\{\epsilon_k\}$ and $\{\gamma_k\}$; in fact, these sequences are
related and satisfy some collectively imposed requirements.
\begin{assumption}[A2-3] \label{assump:VI_stoch_steplength_monotone}
Let $\{\gamma_{k,x}\}$, $\{\gamma_{k,\theta}\}$, $\{\epsilon_{k}\}$ and some constant $\tau\in(0,1)$ be chosen such that:
\begin{enumerate}
  \item[(i)] $\sum_{k=0}^{\infty}\gamma_{k,x}^{2-\tau} < \infty$  and $\sum_{k=0}^{\infty} \gamma_{k,\theta}^2 < \infty$,
  \item[(ii)] $\sum_{k=0}^{\infty}\gamma_{k,x}\epsilon_k = \infty$ and $\sum_{k=0}^{\infty} \gamma_{k,\theta} =\infty$,
  \item[(iii)] $\beta_k = \frac{\gamma_{k,x}^{\tau}}{2\gamma_{k,\theta} \mu_{\theta} } \downarrow 0$ as $k\to 0$.
    \item[(iv)] $\sum_{k=0}^{\infty}\frac{(\epsilon_{k-1}-\epsilon_k)}{\epsilon_k} <\infty$.
\end{enumerate}
\end{assumption}

Before providing a convergence result for Algorithm
\ref{alg:VI_stoch_grad_grad_monotone}, we introduce the following results.

\begin{lem} \label{lem:VI_monotone_Lipschitz}
Let $H:K\to\mathbb{R}^n$ be a mapping that is monotone over $K$, and Lipschitz
		continuous over $K$ with constant $L$. Then, for any $\gamma>0$ and $\epsilon>0$, we have
$ \| (x-y)-\gamma(H(x)-H(y))-\epsilon\gamma(x-y)\|    \leq q \|x-y\|,$
where $q=\sqrt{1-2\gamma\epsilon + \gamma^2 (L^2+\epsilon^2)}$.
\end{lem}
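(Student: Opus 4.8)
The plan is to prove the estimate by squaring both sides and expanding, then controlling the resulting inner-product term via monotonicity and the remaining quadratic term via Lipschitz continuity. Write $u \triangleq x-y$ and $d \triangleq H(x)-H(y)$, so that the vector in question is exactly $(1-\epsilon\gamma)u - \gamma d$. First I would expand the squared norm:
$$\|(1-\epsilon\gamma)u - \gamma d\|^2 = (1-\epsilon\gamma)^2\|u\|^2 - 2\gamma(1-\epsilon\gamma)\,u^T d + \gamma^2\|d\|^2.$$

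Next, I would dispatch the three terms separately. The quadratic term $\gamma^2\|d\|^2$ is bounded by $\gamma^2 L^2\|u\|^2$ using the Lipschitz continuity of $H$. The leading term is already $(1-\epsilon\gamma)^2\|u\|^2 = (1 - 2\epsilon\gamma + \epsilon^2\gamma^2)\|u\|^2$. The cross term $-2\gamma(1-\epsilon\gamma)\,u^T d$ is where the monotonicity of $H$ enters: since $u^T d = (x-y)^T(H(x)-H(y)) \ge 0$, this term is nonpositive and may simply be discarded, provided the factor $(1-\epsilon\gamma)$ is nonnegative. Combining the bounds then gives
$$\|(1-\epsilon\gamma)u - \gamma d\|^2 \le (1 - 2\epsilon\gamma + \epsilon^2\gamma^2 + \gamma^2 L^2)\|u\|^2 = q^2\|u\|^2,$$
and taking square roots yields the claim, where $q\ge 0$ is well defined since $q^2 = (1-\epsilon\gamma)^2 + \gamma^2 L^2 \ge 0$.

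The only delicate point, and the one I expect to be the crux, is the sign of the factor $1-\epsilon\gamma$ multiplying the monotonicity term: discarding $-2\gamma(1-\epsilon\gamma)\,u^T d$ requires $1-\epsilon\gamma \ge 0$, i.e. $\epsilon\gamma \le 1$. This is automatically satisfied in the regime in which the lemma is applied, since in Algorithm \ref{alg:VI_stoch_grad_grad_monotone} the steplengths $\gamma_{k,x}$ and the regularization parameters $\epsilon_k$ are diminishing and small; I would therefore either state this mild condition explicitly or restrict attention to sufficiently large $k$.

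I note in passing that this direct argument delivers a sharper constant than the one obtained by invoking Lemma \ref{lem:strongly_Lipschitz} for the regularized map $H(\cdot)+\epsilon(\cdot)$, which is strongly monotone with constant $\epsilon$ and Lipschitz continuous with constant $L+\epsilon$. That route would produce $\sqrt{1-2\epsilon\gamma+\gamma^2(L+\epsilon)^2}$, whereas exploiting plain monotonicity through $u^T d \ge 0$, rather than absorbing $\epsilon$ into the Lipschitz constant, removes the extraneous $2L\epsilon\gamma^2$ contribution and recovers the stated $q = \sqrt{1-2\gamma\epsilon+\gamma^2(L^2+\epsilon^2)}$.
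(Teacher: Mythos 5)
Your proof is correct, and it is worth noting that the paper itself offers no self-contained argument here: it simply points to the proof of Theorem 1 in \cite{kannan10distributed}. Your direct expansion --- splitting $\|(1-\epsilon\gamma)u-\gamma d\|^2$ into the $(1-\epsilon\gamma)^2\|u\|^2$ term, the cross term killed by monotonicity, and the $\gamma^2\|d\|^2$ term killed by Lipschitz continuity --- is essentially the argument that the cited reference carries out, so in substance you have reconstructed the intended proof rather than found a new one. Your closing remark is also accurate: routing the estimate through Lemma \ref{lem:strongly_Lipschitz} applied to the regularized map $H+\epsilon I$ (strongly monotone with constant $\epsilon$, Lipschitz with constant $L+\epsilon$) would only give $\sqrt{1-2\gamma\epsilon+\gamma^2(L+\epsilon)^2}$, which is strictly weaker than the stated $q$ whenever $L\epsilon\gamma^2>0$, so the direct expansion is genuinely needed to obtain the constant in the lemma.

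Your ``delicate point'' deserves stronger emphasis than you give it: the restriction $\epsilon\gamma\leq 1$ is not merely an artifact of your method; the lemma as stated (``for any $\gamma>0$ and $\epsilon>0$'') is false without it. Take $K=\mathbb{R}^n$ and $H(x)=Lx$, which is monotone and Lipschitz with constant $L$. Then the left-hand side equals $|1-\gamma(\epsilon+L)|\,\|x-y\|$, and squaring shows that the claimed inequality is equivalent to $2\gamma L(\epsilon\gamma-1)\leq 0$, i.e.\ to $\epsilon\gamma\leq 1$; for instance $L=\epsilon=1$, $\gamma=2$ gives $3\|x-y\|$ on the left versus $\sqrt{5}\,\|x-y\|$ on the right. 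So your decision to state the condition $\epsilon\gamma\leq 1$ explicitly (and to observe that it holds automatically in the regime where Algorithm \ref{alg:VI_stoch_grad_grad_monotone} invokes the lemma, since $\gamma_{k,x}$ and $\epsilon_k$ are diminishing) is not optional hedging --- it is a correction that the paper's statement itself requires.
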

\begin{proof}
See proof of Theorem 1 in \cite{kannan10distributed}.
\end{proof}

\begin{lem} \label{lem:VI_monotone_Tik}
Let $H:K\to\mathbb{R}^n$ be a mapping that is monotone over $K$.
Given $\epsilon_k>0$, let $y^k$ be a solution to VI$(K, H+\epsilon_k {\bf
		I})$.  Then,
\begin{align*}
\|y^{k}-y^{k-1}\| \leq \frac{M(\epsilon_{k-1}-\epsilon_k)}{\epsilon_k},
\end{align*}
where $M=\|x^*\|$ and $x^*$ is a solution to VI($H$,$K$).
\end{lem}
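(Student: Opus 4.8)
The plan is to compare the two regularized problems through their variational characterizations and then supplement the comparison with a uniform bound on the regularized trajectory. First I would record that, since $y^k$ solves VI$(K, H+\epsilon_k\mathbf{I})$ and $y^{k-1}$ solves VI$(K, H+\epsilon_{k-1}\mathbf{I})$, we have for every $y\in K$
\begin{align*}
(y-y^k)^T(H(y^k)+\epsilon_k y^k) \geq 0 \quad\mbox{ and }\quad (y-y^{k-1})^T(H(y^{k-1})+\epsilon_{k-1} y^{k-1}) \geq 0.
\end{align*}
Testing the first inequality with $y=y^{k-1}$ and the second with $y=y^k$ and adding them, the terms can be grouped so that, with $d\triangleq y^k-y^{k-1}$,
\begin{align*}
d^T\big(H(y^{k-1})-H(y^k)\big) + d^T\big(\epsilon_{k-1}y^{k-1}-\epsilon_k y^k\big) \geq 0.
\end{align*}
Monotonicity of $H$ gives $d^T(H(y^k)-H(y^{k-1}))\geq 0$, so the first term is nonpositive and may be dropped, leaving $d^T(\epsilon_{k-1}y^{k-1}-\epsilon_k y^k)\geq 0$.

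Next I would extract the norm bound by the algebraic splitting $\epsilon_{k-1}y^{k-1}-\epsilon_k y^k = -\epsilon_k d + (\epsilon_{k-1}-\epsilon_k)y^{k-1}$, which converts the last inequality into
\begin{align*}
\epsilon_k\|d\|^2 \leq (\epsilon_{k-1}-\epsilon_k)\,d^T y^{k-1} \leq (\epsilon_{k-1}-\epsilon_k)\,\|d\|\,\|y^{k-1}\|,
\end{align*}
where the final step is Cauchy--Schwarz and where $\epsilon_{k-1}\geq\epsilon_k$ (the regularization sequence is decreasing) keeps the right-hand side nonnegative. Dividing through by $\epsilon_k\|d\|$ yields $\|y^k-y^{k-1}\|\leq (\epsilon_{k-1}-\epsilon_k)\|y^{k-1}\|/\epsilon_k$.

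What remains, and what I expect to be the main obstacle, is replacing $\|y^{k-1}\|$ by the $k$-independent quantity $M=\|x^*\|$; that is, showing the regularized trajectory never leaves the ball of radius $\|x^*\|$. I would prove the uniform bound $\|y^j\|\leq\|x^*\|$ for every $j$ by a second comparison: testing the defining inequality of $y^j$ with $y=x^*$, testing the defining inequality of $x^*$ (for VI$(H,K)$) with $y=y^j$, and adding. After invoking monotonicity of $H$ to discard the resulting $(x^*-y^j)^T(H(y^j)-H(x^*))$ term, one is left with $\epsilon_j (x^*-y^j)^T y^j \geq 0$, i.e. $x^{*T}y^j \geq \|y^j\|^2$; Cauchy--Schwarz then forces $\|y^j\|\leq\|x^*\|=M$. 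Substituting $\|y^{k-1}\|\leq M$ into the bound of the previous paragraph gives the claimed estimate $\|y^k-y^{k-1}\|\leq M(\epsilon_{k-1}-\epsilon_k)/\epsilon_k$. The only subtlety to flag is that both comparison arguments rely solely on monotonicity of $H$ (not strong monotonicity), and that the uniform bound holds against \emph{any} solution $x^*$ of VI$(H,K)$, so one may take $M$ as small as the least-norm solution permits.
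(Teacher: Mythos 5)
Your proof is correct. Note that the paper does not actually supply an argument for this lemma---it defers to Lemma 3 of \cite{kannan10distributed}---and what you have written is precisely the standard proof used there: pair the variational inequalities for $y^k$ and $y^{k-1}$, discard the $d^T\bigl(H(y^{k-1})-H(y^k)\bigr)$ term by monotonicity, split $\epsilon_{k-1}y^{k-1}-\epsilon_k y^k$ to isolate $\epsilon_k\|d\|^2$, and then bound the Tikhonov trajectory by $\|x^*\|$ via a second comparison against VI$(K,H)$; your flagging of the implicit requirement $\epsilon_{k-1}\geq\epsilon_k$ (needed for the stated bound to be nonnegative, and consistent with assumption (A2-3)(iv) of the paper) and the observation that $M$ may be taken as the norm of the least-norm solution are both accurate.
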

\begin{proof}
See Lemma 3 in \cite{kannan10distributed}.
\end{proof}

The convergence result for Algorithm
\ref{alg:VI_stoch_grad_grad_monotone} can be stated as
follows.\vspace{0.2in}

\begin{thm}[{\bf Almost-sure convergence under monotonicity of $F$}] \label{thm:VI_stoch_grad_grad_monotone}
Suppose (A1-4) ,  (A2-3) and (A\ref{assump:filtration}) hold.
Suppose $X$ is bounded and the solution set $X^*$ of
\eqref{problem_stoch_optim_1} is nonempty.
Let $\{x^k,\theta^k\}$ be computed via Algorithm \ref{alg:VI_stoch_grad_grad_monotone}.
Then, $\theta^{k} \to \theta^{*}$ $a.s.$ as $k\rightarrow\infty$, and $x^{k}$ converges to a random point in $X^*$ $a.s.$ as $k\rightarrow\infty$.
\end{thm}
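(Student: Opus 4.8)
The plan is to mirror the proof of Theorem~\ref{thm:stoch_monotone_optim}, with the role of the objective decrease $f(x^k;\theta^*)-f(x^*;\theta^*)$ now played by the contraction induced by the iterative Tikhonov term $\epsilon_k x^k$, and with the computational iterates compared not against a fixed solution but against a \emph{moving} regularized solution. Concretely, for each $k$ I would introduce $y^k$, the unique solution of the strongly monotone problem VI$(X,F(\cdot;\theta^*)+\epsilon_k{\bf I})$, which satisfies the fixed-point identity $y^k=\Pi_X(y^k-\gamma_{k,x}(F(y^k;\theta^*)+\epsilon_k y^k))$. Since $G$ is strongly monotone and Lipschitz by (A1-3)(iii), the $\theta$-update is structurally identical to that analyzed in Proposition~\ref{thm:stoch_strongly_optim}, so \eqref{eq:stoch_strongly_optim_exp_final_theta} gives $\mathbb{E}[\|\theta^{k+1}-\theta^*\|^2\mid\mathcal{F}_k]\le q_{k,\theta}^2\|\theta^k-\theta^*\|^2+\gamma_{k,\theta}^2\nu_\theta^2$. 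Under (A2-3)(i)--(ii) the hypotheses of Lemma~\ref{lem:supermartingale} hold, so $\theta^k\to\theta^*$ a.s.

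The heart of the argument is the recursion for $\|x^{k+1}-y^k\|^2$. Using nonexpansivity of $\Pi_X$, the fixed-point identity for $y^k$, and adding and subtracting $\gamma_{k,x}F(x^k;\theta^*)$, I would split the increment into a monotone-plus-regularization part $(x^k-y^k)-\gamma_{k,x}(F(x^k;\theta^*)-F(y^k;\theta^*))-\gamma_{k,x}\epsilon_k(x^k-y^k)$, bounded by $q_k\|x^k-y^k\|$ with $q_k=\sqrt{1-2\gamma_{k,x}\epsilon_k+\gamma_{k,x}^2(L_x^2+\epsilon_k^2)}$ via Lemma~\ref{lem:VI_monotone_Lipschitz}, and a $\theta$-mismatch part $\gamma_{k,x}(F(x^k;\theta^k)-F(x^k;\theta^*))$, bounded by $\gamma_{k,x}L_\theta\|\theta^k-\theta^*\|$ through (A1-3)(ii). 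Taking $\mathbb{E}[\cdot\mid\mathcal{F}_k]$ (so the $w^k$ cross term vanishes and $\mathbb{E}[\|w^k\|^2\mid\mathcal{F}_k]\le\nu_x^2$) and splitting the cross term via $2ab\le\gamma_{k,x}^{2-\tau}L_\theta^2a^2+\gamma_{k,x}^\tau b^2$ exactly as in \eqref{eq:stoch_monotone_optim_exp_term3}, I obtain
\[ \mathbb{E}[\|x^{k+1}-y^k\|^2\mid\mathcal{F}_k]\le\big(q_k^2+\gamma_{k,x}^{2-\tau}L_\theta^2\big)\|x^k-y^k\|^2+\big(\gamma_{k,x}^\tau+\gamma_{k,x}^2L_\theta^2\big)\|\theta^k-\theta^*\|^2+\gamma_{k,x}^2\nu_x^2. \]
To pass from $y^k$ to $y^{k+1}$, I would use $\|x^{k+1}-y^{k+1}\|^2\le\|x^{k+1}-y^k\|^2+2D\,d_k+d_k^2$, where $D=\mathrm{diam}(X)$ (finite since $X$ is bounded) and $d_k=\|y^{k+1}-y^k\|\le M(\epsilon_k-\epsilon_{k+1})/\epsilon_{k+1}$ by Lemma~\ref{lem:VI_monotone_Tik}; by (A2-3)(iv) these drift terms are summable.

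I would then form the weighted Lyapunov quantity $V_k=\|x^k-y^k\|^2+\beta_k\|\theta^k-\theta^*\|^2$ and combine the two recursions as in \eqref{eq:stoch_monotone_rand_exp_x_theta}--\eqref{eq:stoch_monotone_rand_exp_x_theta4}; the choice $\beta_k=\gamma_{k,x}^\tau/(2\gamma_{k,\theta}\mu_\theta)$ from (A2-3)(iii) collapses the $\|\theta^k-\theta^*\|^2$ coefficient to $(1+\gamma_{k,\theta}^2C_\theta^2+2\mu_\theta\gamma_{k,\theta}\gamma_{k,x}^{2-\tau}L_\theta^2)\beta_k$. Using $\|x^k-y^k\|\le D$ to absorb the $O(\gamma_{k,x}^2)+O(\gamma_{k,x}^{2-\tau})$ coefficients together with the noise and drift terms into a summable remainder $\rho_k$, while retaining the genuinely negative contribution $\sigma_k=2\gamma_{k,x}\epsilon_k\|x^k-y^k\|^2$, produces a recursion $\mathbb{E}[V_{k+1}\mid\mathcal{F}_k]\le(1+u_k)V_k-\sigma_k+\rho_k$ with $\sum_k u_k<\infty$ and $\sum_k\rho_k<\infty$. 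Lemma~\ref{lem:supermartingale2} then yields that $V_k$ converges a.s. and $\sum_k\gamma_{k,x}\epsilon_k\|x^k-y^k\|^2<\infty$ a.s.; since $\sum_k\gamma_{k,x}\epsilon_k=\infty$, this forces $\liminf_k\|x^k-y^k\|=0$, and combined with $\beta_k\|\theta^k-\theta^*\|^2\to0$ (from $\theta^k\to\theta^*$ and $\beta_k\downarrow0$) it follows that $\|x^k-y^k\|\to0$ a.s. Finally, summability of $d_k$ makes $\{y^k\}$ Cauchy with some limit $\bar y$, and passing to the limit in the VI defining $y^k$ (with $\epsilon_k\downarrow0$, so $\epsilon_k y^k\to0$ and $F(y^k;\theta^*)\to F(\bar y;\theta^*)$) shows $\bar y$ solves \eqref{problem_stoch_optim_1}, i.e.\ $\bar y\in X^*$; hence $x^k\to\bar y\in X^*$ a.s., alongside $\theta^k\to\theta^*$.

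The step I expect to be the genuine obstacle is the simultaneous control of the \emph{moving} comparator $y^k$. Unlike Theorem~\ref{thm:stoch_monotone_optim}, where $x^*$ is fixed, here the target drifts at every iteration, so one must verify both that its increments (Lemma~\ref{lem:VI_monotone_Tik}) are summable against the steplength and regularization schedule, and that the regularization-induced decrease $-2\gamma_{k,x}\epsilon_k\|x^k-y^k\|^2$ is not overwhelmed by the $\theta$-mismatch and noise before it can drive $\|x^k-y^k\|\to0$. This delicate balance is precisely what the joint requirements in (A2-3) are engineered to guarantee.
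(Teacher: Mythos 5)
Your proposal is correct and follows essentially the same route as the paper's proof: the same Tikhonov comparator $y^k$, the same recursions built from Lemmas~\ref{lem:VI_monotone_Lipschitz} and \ref{lem:VI_monotone_Tik}, the same weighted Lyapunov quantity with $\beta_k$ chosen by (A2-3)(iii), and the same appeal to Lemmas~\ref{lem:supermartingale} and \ref{lem:supermartingale2}. The only differences are harmless: at the end you obtain $y^k \to \bar y \in X^*$ self-containedly via a Cauchy argument and passage to the limit in the regularized VI, where the paper simply cites the convergence of Tikhonov trajectories, and your deduction of $\|x^k - y^k\| \to 0$ (combining $\liminf_k \|x^k-y^k\| = 0$ with the a.s.\ convergence of the Lyapunov sequence) is actually slightly more careful than the paper's terse assertion of the same fact.
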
\vspace{0.2in}

\begin{proof}
We have for any $x^*\in X^*$ that $x^* = \Pi_{X} ( x^{*} - \gamma_{k,x}
		F(x^*;\theta^*)).$
Suppose $y^k$ is a solution to the following fixed-point problem
\begin{align*}
    y^k & = \Pi_{X}(y^{k}-\gamma_{k,x}(F(y^{k};\theta^{*})+\epsilon_k y^{k})).
\end{align*}
Then, by the triangle inequality $\|x^{k+1}-x^*\|$ may be
bounded as follows:
\begin{align*}
    \|x^{k+1}-x^*\| & \leq \underbrace{\|x^{k+1}-y^k\|}_{\textbf{\textrm{Term 1}}} + \underbrace{\|y^k-x^*\|}_{\textbf{\textrm{Term 2}}}.
\end{align*}
Term 2 converges to zero by the convergence statement of Tikhonov regularization
methods~{\cite{Pang03I}}. By using the non-expansivity of the Euclidean
projector, $\|x^{k+1}-y^k\|^2$ can be bounded as follows:
\begin{align*}
 \|x^{k+1}-y^k\|^2 & =\|\Pi_{X}(x^{k}-\gamma_{k,x}(F(x^{k};\theta^{k})+\epsilon_k x^{k} + w^k))  -\Pi_{X}(y^{k}-\gamma_{k,x}(F(y^{k};\theta^{*})+\epsilon_k y^{k}))\|^2\\
\notag  & \leq\|(x^{k}-y^k)-\gamma_{k,x}(F(x^{k};\theta^{k})-F(y^{k};\theta^{*}))  - \epsilon_k \gamma_{k,x} (x^{k}-y^{k}) - \gamma_{k,x}w^k \|^2.
\end{align*}
By adding and subtracting
$\gamma_{k,x} F(x^{k};\theta^{*})$, this expression can be further expanded
as follows:
 \begin{align*}
 & \quad \ \|(x^{k}-y^k)-\gamma_{k,x}(F(x^{k};\theta^{*})-F(y^k;\theta^{*}))  - \gamma_{k,x} (F(x^{k};\theta^{k})-F(x^{k};\theta^{*})) - \epsilon_k \gamma_{k,x} (x^{k}-y^{k}) - \gamma_{k,x}w^k \|^2 \\
\notag &= \|(x^{k}-y^k)-\gamma_{k,x}(F(x^{k};\theta^{*})-F(y^k;\theta^{*}))  - \epsilon_k \gamma_{k,x} (x^{k}-y^{k}) \|^2  +\gamma_{k,x}^2 \|F(x^{k};\theta^{k})-F(x^{k};\theta^{*})\|^2+ \gamma_{k,x}^2 \|w^k\|^2 \\
\notag &  -2 [(x^{k}-y^k)-\gamma_{k,x}(F(x^{k};\theta^{*})-F(y^k;\theta^{*}))   - \epsilon_k \gamma_{k,x} (x^{k}-y^{k})]^T \times(F(x^{k};\theta^{k})-F(x^{k};\theta^{*})) \\
\notag &  -2 [(x^{k}-y^k)-\gamma_{k,x}(F(x^{k};\theta^{*})-F(y^k;\theta^{*}))  - \epsilon_k \gamma_{k,x} (x^{k}-y^{k})]^T w^k +2 \gamma_{k,x}^2 (F(x^{k};\theta^{k})-F(x^{k};\theta^{*}))^T w^k.
\end{align*}
Noting that $\mathbb{E}[w^k \mid \mathcal{F}_k] = 0$, we have
 \begin{align}  \label{eq:VI_grad_grad_monotone_rand_exp}
  \mathbb{E}[\|x^{k+1}-y^{k}\|^2 \mid \mathcal{F}_k]  \leq  \textbf{Term 3}+\textbf{Term 4} + \textbf{Term 5} + \gamma_{k,x}^2 \mathbb{E}[\| w^k \|^2\mid\mathcal{F}_k],
\end{align}
where
\begin{align*}
  \textbf{Term 3}& \triangleq  \|(x^{k}-y^{k})-\gamma_{k,x}(F(x^{k};\theta^{*})-F(y^{k};\theta^{*}))  - \epsilon_k \gamma_{k,x} (x^{k}-y^{k})\|^2 , \\
\textbf{Term 4} &\triangleq   \gamma_{k,x}^2 \|F(x^{k};\theta^{k})-F(x^{k};\theta^{*})\|^2 , \\
\textbf{Term 5}& \triangleq  - 2 \gamma_{k,x} [ (x^{k}-y^{k})-\gamma_{k,x}(F(x^{k};\theta^{*})-F(y^{k};\theta^{*}))  - \epsilon_k \gamma_{k,x} (x^{k}-y^{k}) ]^T(F(x^{k};\theta^{k})-F(x^{k};\theta^{*})) .
\end{align*}
By Lemma \ref{lem:VI_monotone_Lipschitz} and (A1-4), Term 3 can be further bounded by
\begin{align} \label{eq:VI_grad_grad_monotone_rand_exp_term1}
 (1-2\gamma_{k,x} \epsilon_k + \gamma_{k,x}^2 (L_x^2+(\epsilon_k)^2)) \|x^{k}-y^k\|^2.
\end{align}
By the Lipschitz continuity of $F(x;\theta)$ in $\theta$ (A1-4), Term 4 can be further bounded by
\begin{align} \label{eq:VI_grad_grad_monotone_rand_exp_term2}
 \gamma_{k,x}^2 L_{\theta}^2 \|\theta^{k}-\theta^{*}\|^2.
\end{align}
By the Cauchy-Schwarz inequality, Lemma \ref{lem:VI_monotone_Lipschitz}, (A1-4) as well as the fact that $2ab\leq a^2+b^2$, Term 5 can be further bounded by
 \begin{align}
 \begin{aligned} \label{eq:VI_grad_grad_monotone_rand_exp_term3}
     & \quad\ 2\gamma_{k,x}  \| (x^{k}-y^{k})-\gamma_{k,x}(F(x^{k};\theta^{*})-F(y^k;\theta^{*}))   - \epsilon_k \gamma_{k,x} (x^{k}-y^{k})\|\| F(x^{k};\theta^{k})-F(x^{k};\theta^{*}) \| \\
   &  \leq  2\gamma_{k,x}\sqrt{1-2\gamma_{k,x} \epsilon_k + \gamma_{k,x}^2 (L_x^2+(\epsilon_k)^2)} \|x^{k}-y^{k}\|    L_{\theta} \|\theta^{k}-\theta^{*}\|  \\
   & \leq 2 \gamma_{k,x} L_{\theta} \|x^{k}-y^k\| \|\theta^{k}-\theta^{*}\| \\
 & \leq \gamma_{k,x}^{2-\tau} L^2_{\theta} \|x^{k}-y^k\|^2 + \gamma_{k,x}^{\tau} \|\theta^{k}-\theta^{*}\|^2,
\end{aligned}
\end{align}
where $\tau\in(0,1)$ is chosen to satisfy (A2-3).
Combining \eqref{eq:VI_grad_grad_monotone_rand_exp},  \eqref{eq:VI_grad_grad_monotone_rand_exp_term1},  \eqref{eq:VI_grad_grad_monotone_rand_exp_term2} and \eqref{eq:VI_grad_grad_monotone_rand_exp_term3},
we get
\begin{align}
\begin{aligned} \label{eq:VI_grad_grad_monotone_rand_exp_final}
 \mathbb{E}[\|x^{k+1}-y^k\|^2\mid\mathcal{F}_k]
 &\leq (q_{k,x}^2+\gamma_{k,x}^{2-\tau} L^2_{\theta}) \|x^{k}-y^k\|^2  + (\gamma_{k,x}^\tau  + \gamma_{k,x}^2 L_{\theta}^2) \|\theta^{k}-\theta^{*}\|^2 + \gamma_{k,x}^2  \nu_x^2,
 \end{aligned}
\end{align}
where $q_{k,x} = \sqrt{1-2\gamma_{k,x} \epsilon_k + \gamma_{k,x}^2 (L_x^2+(\epsilon_k)^2)}$.

On the other hand, we have that $\theta^{*}$ is the unique solution to
VI$(\Theta,\mathbb{E}[G(\bullet;\eta)])$ and
\begin{align*}
   \theta^{*} = \Pi_{\Theta} ( \theta^{*} - \gamma_{k,\theta}  G(\theta^{*})).
\end{align*}
Therefore,  by the nonexpansivity of the Euclidean projector, $\|\theta^{k+1}-\theta^*\|^2$ may be bounded as follows:
\begin{align*}
  & \quad\ \|\theta^{k+1}-\theta^*\|^2
\notag   =\|\Pi_{\Theta} ( \theta^{k} - \gamma_{k,\theta} (G(\theta^{k})+v^{k}))   -\Pi_{\Theta} ( \theta^* - \gamma_{k,\theta} G(\theta^*))\|^2\\
\notag  & \leq\|(\theta^{k}-\theta^{*})-\gamma_{k,\theta}(G(\theta^{k})-G(\theta^*)) -\gamma_{k,\theta} v^k \|^2\\
\notag  & = \|(\theta^{k}-\theta^{*})-\gamma_{k,\theta}(G(\theta^k)-G(\theta^*))\|^2 + \gamma_{k,\theta}^2 \|v^{k}\|^2  -2\gamma_{k,\theta} [ (\theta^{k}-\theta^{*})-\gamma_{k,\theta}(G(\theta^k)-G(\theta^*)) ]^T v^k.
\end{align*}
By taking conditional expectations and by recalling that $\mathbb{E}[v^{k} \mid \mathcal{F}_k] =  0$ (A\ref{assump:filtration}),
we obtain
the following:
\begin{align}
\begin{aligned} \label{eq:VI_grad_grad_monotone_rand_exp_final_theta}
      \mathbb{E}[\|\theta^{k+1}-\theta^*\|^2\mid\mathcal{F}_k] &  \leq \|(\theta^{k}-\theta^{*})-\gamma_{k,\theta}(G(\theta^k)-G(\theta^*))\|^2 + \gamma_{k,\theta}^2 \mathbb{E}[\|v^{k}\|^2\mid\mathcal{F}_k] \\
& \leq	q_{k,\theta}^2 \|\theta^{k}-\theta^*\|^2 + \gamma_{k,\theta}^2 \nu_{\theta}^2,
\end{aligned}
\end{align}
where $q_{k,\theta} = \sqrt{1-2\gamma_{k,\theta} \mu_{\theta}  + \gamma_{k,\theta}^2 C_{\theta}^2}$,
and the second inequality follows from Lemma \ref{lem:strongly_Lipschitz}, (A1-4) and (A\ref{assump:filtration}).
 Since by (A2-3)
   $\sum_{k=0}^{\infty} (1-q_{k,\theta}^2) = \infty$ and
   $\sum_{k=0}^{\infty} \gamma_{k,\theta}^2 \nu_{\theta}^2 < \infty$,
and
\begin{align*}
\lim_{k\to \infty} \frac{\gamma_{k,\theta}^2\nu_{\theta}^2}{1-q_{k,\theta}^2} & = \lim_{k\to \infty} \frac{\gamma_{k,\theta}^2\nu_{\theta}^2}{2\gamma_{k,\theta} \mu_{\theta}  - \gamma_{k,\theta}^2 C_{\theta}^2}  = \lim_{k\to \infty} \frac{\gamma_{k,\theta}\nu_{\theta}^2	}{2 \mu_{\theta}  - \gamma_{k,\theta} C_{\theta}^2}  = 0,
\end{align*}
we have by Lemma \ref{lem:supermartingale} that $\|\theta^{k}-\theta^{*}\|  \to 0   $ $a.s.$ $   \textrm{as } k\to\infty$.  Choose $\beta_k = \frac{\gamma_{k,x}^{\tau}}{2\gamma_{k,\theta} \mu_{\theta} }$ by (A2-3).
Note that by
assumption $\beta_{k+1}\leq \beta_k$.
By multiplying
the left hand side of \eqref{eq:VI_grad_grad_monotone_rand_exp_final_theta} by $\beta_{k+1}$ and adding to the left hand side  of $\eqref{eq:VI_grad_grad_monotone_rand_exp_final}$,
we get
\begin{align}
\begin{aligned} \label{eq:VI_grad_grad_monotone_rand_exp_theta}
    & \quad \ \mathbb{E}[\|x^{k+1}-y^k\|^2\mid\mathcal{F}_k] + \beta_{k+1} \mathbb{E}[\|\theta^{k+1}-\theta^{*}\|^2 \mid\mathcal{F}_k]  \\
    & \leq \mathbb{E}[\|x^{k+1}-y^k\|^2\mid\mathcal{F}_k] + \beta_{k} \mathbb{E}[\|\theta^{k+1}-\theta^{*}\|^2 \mid\mathcal{F}_k]  \\
 	& \leq (q_{k,x}^2+\gamma_{k,x}^{2-\tau} L^2_{\theta}) \|x^{k}-y^k\|^2   + (\beta_k q_{k,\theta}^2 + \gamma_{k,x}^{\tau} +\gamma_{k,x}^2 L_{\theta}^2 ) \|\theta^{k}-\theta^*\|^2  + \beta_k \gamma_{k,\theta}^2 \nu_{\theta}^2 + \gamma_{k,x}^2  \nu_x^2\\
& = (q_{k,x}^2+\gamma_{k,x}^{2-\tau} L^2_{\theta}) \|x^{k}-y^k\|^2   + \underbrace{\frac{\beta_k q_{k,\theta}^2 + \gamma_{k,x}^{\tau} +\gamma_{k,x}^2 L_{\theta}^2}{\beta_k}}_{\bf Term 6} \cdot \beta_k \|\theta^{k}-\theta^*\|^2   + \beta_k \gamma_{k,\theta}^2 \nu_{\theta}^2  + \gamma_{k,x}^2  \nu_x^2.
\end{aligned}
\end{align}
Term 6 on the right hand side of  \eqref{eq:VI_grad_grad_monotone_rand_exp_theta} can be further expanded as
\begin{align} \begin{aligned} \label{eq:VI_grad_grad_monotone_rand_exp_theta4}
\frac{\beta_k q_{k,\theta}^2 + \gamma_{k,x}^{\tau} +\gamma_{k,x}^2 L_{\theta}^2}{\beta_k}
 & = q_{k,\theta}^2 +  \frac{\gamma_{k,x}^{\tau} +\gamma_{k,x}^2 L_{\theta}^2}{\beta_k}   = 1-2\gamma_{k,\theta} \mu_{\theta}  + \gamma_{k,\theta}^2 C_{\theta}^2 + \frac{\gamma_{k,x}^{\tau}}{\beta_k} + \frac{\gamma_{k,x}^2 L_{\theta}^2}{\beta_k}  \\
  & = 1 + \gamma_{k,\theta}^2 C_{\theta}^2 + 2\gamma_{k,\theta} \gamma_{k,x}^{2-\tau} \mu_{\theta}  L_{\theta}^2.
\end{aligned}
\end{align}
Combining \eqref{eq:VI_grad_grad_monotone_rand_exp_theta} and \eqref{eq:VI_grad_grad_monotone_rand_exp_theta4}, we get
\begin{align*}
  & \quad\ \mathbb{E}[\|x^{k+1}-y^k\|^2\mid\mathcal{F}_k] + \beta_{k+1}
  \mathbb{E}[\|\theta^{k+1}-\theta^{*}\|^2 \mid\mathcal{F}_k]  \\
	& \leq (q_{k,x}^2+\gamma_{k,x}^{2-\tau}L^2_{\theta}) \|x^{k}-y^k\|^2 + (1 + \gamma_{k,\theta}^2 C_{\theta}^2   + 2\gamma_{k,\theta} \gamma_{k,x}^{2-\tau} \mu_{\theta}  L_{\theta}^2)   \beta_k \|\theta^{k}-\theta^*\|^2 + \beta_k \gamma_{k,\theta}^2 \nu_{\theta}^2  + \gamma_{k,x}^2  \nu_x^2 \\
\notag	& = (1 + \gamma_{k,\theta}^2 C_{\theta}^2 + 2\gamma_{k,\theta}
		\gamma_{k,x}^{2-\tau} \mu_{\theta}  L_{\theta}^2)   (
			\|x^{k}-y^k\|^2 + \beta_k \|\theta^{k}-\theta^*\|^2 )   - (\gamma_{k,\theta}^2 C_{\theta}^2 + 2\gamma_{k,\theta} \gamma_{k,x}^{2-\tau} \mu_{\theta}  L_{\theta}^2 + 2\gamma_{k,x} \epsilon_k) \|x^{k}-y^k\|^2 \\
&  + (\gamma_{k,x}^2 (L_x^2+(\epsilon_k)^2) +\gamma_{k,x}^{2-\tau}L^2_{\theta} ) \|x^{k}-y^k\|^2   + \beta_k \gamma_{k,\theta}^2 \nu_{\theta}^2  + \gamma_{k,x}^2  \nu_x^2.
\end{align*}
Note that
$\|x^{k+1}-y^k\|^2 \leq \|x^{k}-y^{k-1}\|^2 + 2 \|x^{k}-y^{k-1}\| \|y^{k}-y^{k-1}\|+ \|y^{k}-y^{k-1}\|^2$. We have
\begin{align*}
\notag  & \quad \ \mathbb{E}[\|x^{k+1}-y^k\|^2\mid\mathcal{F}_k] + \beta_{k+1} \mathbb{E}[\|\theta^{k+1}-\theta^{*}\|^2 \mid\mathcal{F}_k]  \\
 \notag	& \leq (1 + \gamma_{k,\theta}^2 C_{\theta}^2 + 2\gamma_{k,\theta} \gamma_{k,x}^{2-\tau} \mu_{\theta}  L_{\theta}^2)   ( \|x^{k}-y^{k-1}\|^2 + \beta_k \|\theta^{k}-\theta^*\|^2 ) \\
 \notag	&\quad\  + 2(1 + \gamma_{k,\theta}^2 C_{\theta}^2 + 2\gamma_{k,\theta} \gamma_{k,x}^{2-\tau} \mu_{\theta}  L_{\theta}^2)  \|x^{k}-y^{k-1}\| \|y^{k}-y^{k-1}\|  + (1 + \gamma_{k,\theta}^2 C_{\theta}^2 + 2\gamma_{k,\theta} \gamma_{k,x}^{2-\tau} \mu_{\theta}  L_{\theta}^2)  \|y^{k}-y^{k-1}\|^2  \\
\notag	&\quad\  - (\gamma_{k,\theta}^2 C_{\theta}^2 + 2\gamma_{k,\theta} \gamma_{k,x}^{2-\tau} \mu_{\theta}  L_{\theta}^2 + 2\gamma_{k,x} \epsilon_k) \|x^{k}-y^k\|^2 + (\gamma_{k,x}^2 (L_x^2+(\epsilon_k)^2) +\gamma_{k,x}^{2-\tau}L^2_{\theta} ) \|x^{k}-y^k\|^2  + \beta_k \gamma_{k,\theta}^2 \nu_{\theta}^2  + \gamma_{k,x}^2  \nu_x^2,
\end{align*}
which can be further reduced to
\begin{align*}
\notag  & \quad \ \mathbb{E}[\|x^{k+1}-y^k\|^2\mid\mathcal{F}_k] + \beta_{k+1} \mathbb{E}[\|\theta^{k+1}-\theta^{*}\|^2 \mid\mathcal{F}_k]  \\
  \notag	& \leq (1 + \gamma_{k,\theta}^2 C_{\theta}^2 + 2\gamma_{k,\theta} \gamma_{k,x}^{2-\tau} \mu_{\theta}  L_{\theta}^2) ( \|x^{k}-y^{k-1}\|^2 + \beta_k \|\theta^{k}-\theta^*\|^2 ) \\
 \notag	& \quad\ + 2(1 + \gamma_{k,\theta}^2 C_{\theta}^2 + 2\gamma_{k,\theta} \gamma_{k,x}^{2-\tau} \mu_{\theta}  L_{\theta}^2)  \|x^{k}-y^{k-1}\| \|y^{k}-y^{k-1}\|  \\
 &\quad\ + (1 + \gamma_{k,\theta}^2 C_{\theta}^2 + 2\gamma_{k,\theta} \gamma_{k,x}^{2-\tau} \mu_{\theta}  L_{\theta}^2)  \|y^{k}-y^{k-1}\|^2  \\
\notag	& \quad\ - 2\gamma_{k,x} \epsilon_k \|x^{k}-y^k\|^2
 + (\gamma_{k,x}^2 (L_x^2+(\epsilon_k)^2) +\gamma_{k,x}^{2-\tau}L^2_{\theta} ) \|x^{k}-y^k\|^2 + \beta_k \gamma_{k,\theta}^2 \nu_{\theta}^2  + \gamma_{k,x}^2  \nu_x^2.
\end{align*}
By Lemma \ref{lem:VI_monotone_Tik} and (A2-3),
$\sum_{k=0}^{\infty}  \|y^{k}-y^{k-1}\| < \infty.$ and $\sum_{k=0}^{\infty}  \|y^{k}-y^{k-1}\|^2 < \infty.$
Therefore, by boundedness of $X$, (A2-3) and Lemma \ref{lem:supermartingale2}, we have that there exists a random variable $V$ such that
\begin{align*}
    \|x^{k}-y^{k-1}\|^2 + \beta_{k} \|\theta^{k}-\theta^{*}\|^2  \to V \quad  a.s.  \quad \textrm{as } k\to\infty.
\end{align*}
and $\sum_{k=0}^{\infty} 2\gamma_{k,x}\epsilon_k \|x^{k}-y^{k}\|^2 <
  \infty.$ Since $\sum_{k=0}^{\infty}\gamma_{k,x}\epsilon_k = \infty$, we get $\|x^{k}-y^{k}\|  \to 0  $ $a.s.$ $ \textrm{as } k\to\infty$.
This implies  $\|x^{k}-x^{*}\|  \to 0  $ $a.s.$ $ \textrm{as } k\to\infty$.
\end{proof}

\subsection{Diminishing and constant steplength error
	analysis}\label{sec:III.III}
In this section, we estimate the convergence rate of the proposed schemes.
Analogous to Section \ref{sec:II.III}, we obtain the optimal
$\Oscr(1/K)$ rate estimate for the upper
bound on the expected error in the solution $x_K$ when $F(\bullet;\theta^*)$ is strongly monotone
in ($\bullet$).
In addition, when $F(\bullet;\theta^*)$ is merely monotone and the
variational inequality problem possesses the \textit{minimum principle
	sufficiency} (MPS) property (See Lemma \ref{lem:weakly_sharp} for a
			definition of the MPS property), a rate estimate is still available by using averaging.
If we replace
$\nabla_x f$ and $\nabla_{\theta} g$ by $F$ and $G$, respectively, in Theorem \ref{thm:stoch_strongly_optim_error},
then we obtain the following:
\begin{thm}[{\bf Rate estimate for strongly monotone $F$}] \label{thm:stoch_strongly_VI_error}
Suppose (A1-3) and (A\ref{assump:filtration}) hold. Suppose $\gamma_{x,k}=\lambda_{x}/k$
and $\gamma_{\theta,k}=\lambda_{\theta}/k$ with $\lambda_{x}>1/\mu_{x}$
and $\lambda_{\theta}>1/(2\mu_{\theta})$.  Let
$\mathbb{E}[\|F(x^{k};\theta^{k})+ w^k \|^2]\leq M^2$  and
$\mathbb{E}[\|G(\theta^{k})+ v^k \|^2]\leq M_{\theta}^2$  for all
$x^k\in X$ and $\theta^k\in\Theta$.
Suppose $x^*$ is the unique solution to VI$(X,\mathbb{E}[F(\bullet;\theta^*,\xi)])$.
Let $\{x^k,\theta^k\}$
be computed via Algorithm \ref{alg:VI_stoch_grad_grad_strongly}.  Then, the
following hold:
\begin{align*}
  \mathbb{E}[\|\theta^{k} - \theta^{*} \|^2 ] \leq
  \frac{Q_{\theta}(\lambda_{\theta})}{k} \mbox{ and }
   \mathbb{E}[\|x^{k} - x^{*} \|^2 ] \leq \frac{Q_{x}(\lambda_{x})}{k},
\end{align*}
\begin{align*}
  \mbox{ where } Q_{\theta}(\lambda_{\theta})&\triangleq \max\left\{ \lambda_{\theta}^2 M_{\theta}^2 (2\mu_{\theta}\lambda_{\theta} - 1)^{-1}, \mathbb{E}[\|\theta^{1} - \theta^{*} \|^2 ] \right\},
  Q_{x}(\lambda_{x})\triangleq \max\left\{ \lambda_{x}^2 \widetilde{M}^2
(\mu_{x}\lambda_{x} - 1)^{-1}, \mathbb{E}[\|x^{1} - x^{*} \|^2 ]
\right\}, \\
 \mbox{ and } \widetilde{M} &\triangleq \sqrt{ M^2 + \frac{L_{\theta}^2Q_{\theta}(\lambda_{\theta})}{\mu_x \lambda_{x}} }.
\end{align*}
\end{thm}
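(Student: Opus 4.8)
The plan is to reduce the statement directly to Proposition \ref{thm:stoch_strongly_optim_error}, since the recursion derived there used strong convexity of $f$ only through two structural consequences, both of which are supplied by the strong-monotonicity hypotheses in (A1-3). Concretely, I would begin exactly as in \eqref{alg:stoch_grad_grad_strongly_constant_A}: writing $A_k \triangleq \tfrac12\|x^k-x^*\|^2$, using $x^*\in X$ so that $\Pi_X(x^*)=x^*$, and invoking non-expansivity of the Euclidean projector to obtain
$$ A_{k+1} \le A_k + \tfrac12\gamma_{x,k}^2\|F(x^k;\theta^k)+w^k\|^2 - \gamma_{x,k}(x^k-x^*)^T(F(x^k;\theta^k)+w^k). $$
Taking conditional expectations, using $\mathbb{E}[w^k\mid\mathcal{F}_k]=0$ together with $\mathbb{E}[\|F(x^k;\theta^k)+w^k\|^2]\le M^2$, yields the analogue of \eqref{alg:stoch_grad_grad_strongly_constant_a}.

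The only place where strong convexity entered the earlier argument was the inner-product estimate \eqref{alg:stoch_grad_grad_strongly_constant_a_term2}. I would reproduce the same three-way split
$$(x^k-x^*)^T F(x^k;\theta^k) = (x^k-x^*)^T(F(x^k;\theta^k)-F(x^*;\theta^k)) + (x^k-x^*)^T(F(x^*;\theta^k)-F(x^*;\theta^*)) + (x^k-x^*)^T F(x^*;\theta^*),$$
and then observe that (i) strong monotonicity of $F(\cdot;\theta^k)$ in $x$ (A1-3(i)) bounds the first term below by $\mu_x\|x^k-x^*\|^2$, exactly replacing the strong-convexity bound, and (ii) since $x^*$ solves VI$(X,\mathbb{E}[F(\cdot;\theta^*)])$, the variational inequality $(x^k-x^*)^T\mathbb{E}[F(x^*;\theta^*)]\ge 0$ discards the third term, exactly replacing the first-order optimality condition used before. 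The middle cross-term is handled identically---Cauchy--Schwarz, the Lipschitz continuity of $F$ in $\theta$ (A1-3(ii)) giving the factor $L_\theta\|\theta^k-\theta^*\|$, and $2ab\le \mu_x a^2+\mu_x^{-1}b^2$---producing the recursion
$$ a_{k+1} \le (1-\gamma_{x,k}\mu_x)a_k + \tfrac12\gamma_{x,k}^2 M^2 + \tfrac12\frac{\gamma_{x,k}}{\mu_x}L_\theta^2\,\mathbb{E}[\|\theta^k-\theta^*\|^2], $$
the verbatim counterpart of \eqref{eq:strongly_constant_a}.

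For the $\theta$-iterate I would note that $G$ is strongly monotone with constant $\mu_\theta$ and Lipschitz with constant $C_\theta$ (A1-3(iii)), so that \eqref{eq:stoch_strongly_optim_exp_final_theta} holds with the same $q_{k,\theta}$, and the classical stochastic-approximation estimate for strongly monotone operators (cf.\ inequality (5.292) in \cite{Shapiro09lecturesSA}) gives $\mathbb{E}[\|\theta^k-\theta^*\|^2]\le Q_\theta(\lambda_\theta)/k$ with $\lambda_\theta>1/(2\mu_\theta)$, just as in \eqref{eq:error_theta_Q}. Substituting $\gamma_{x,k}=\lambda_x/k$ and this bound into the $x$-recursion collapses it to $a_{k+1}\le(1-\mu_x\lambda_x/k)a_k+\tfrac12\lambda_x^2\widetilde M^2/k^2$ with $\widetilde M$ as defined, and a standard induction on $k$ (using $\lambda_x>1/\mu_x$) delivers $\mathbb{E}[\|x^k-x^*\|^2]\le Q_x(\lambda_x)/k$.

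The argument is essentially bookkeeping; the one substantive point to check---and the only genuine obstacle---is that strong monotonicity of $F(\cdot;\theta)$ is a perfect surrogate for strong convexity in the two inner-product inequalities above, and that the VI property of $x^*$ plays the role of the optimality condition. Once that is confirmed, every remaining estimate is term-by-term identical to the proof of Proposition \ref{thm:stoch_strongly_optim_error}, so no new constants or conditions arise and the bounds carry over unchanged.
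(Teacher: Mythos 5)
Your proposal is correct and follows essentially the same route as the paper: the paper proves this theorem by simply observing that replacing $\nabla_x f$ and $\nabla_{\theta} g$ with $F$ and $G$ in the proof of Proposition \ref{thm:stoch_strongly_optim_error} yields the result, and your argument is exactly that substitution carried out explicitly, with the correct identification that strong monotonicity of $F(\cdot;\theta)$ replaces strong convexity in the first inner-product bound and the variational inequality satisfied by $x^*$ replaces the first-order optimality condition in discarding the term $(x^k-x^*)^T F(x^*;\theta^*)$. The remaining estimates, the $\theta$-recursion, and the induction are term-by-term identical to the paper's earlier proof, so no gap arises.
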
\vspace{0.2in}

Next, we weaken the strong monotonicity of $F$, but assume that
\eqref{problem_stoch_optim_1} satisfies the MPS property,
introduced in the following Lemma. Note that this property  guarantees
weak sharpness of the solution set; this is analogous to  weak-sharpness
of minima in optimization problems~\cite{burke93weaksharp}.
\begin{lem}[Theorem 4.3 in \cite{Marcotte99weaksharp}] \label{lem:weakly_sharp}
Let $H:X\to\mathbb{R}^n$ be a mapping that is monotone over the compact polyhedral set $X$.
Let $X^*$ be the solution set of VI($X,H$).
If the VI$(X,H)$ possesses the minimum principle sufficiency (MPS) property, then
there exists a positive number $\alpha$ such that
$(x-x^*)^TH(x^*)\geq\alpha\; \mathrm{dist}(x,X^*),\quad \forall x\in X,\quad\forall x^*\in X^*,$
where $\mathrm{dist}(x,X^*)\triangleq\min_{x^*\in X^*}\|x-x^*\|$.
We say that the VI($X,H$) possesses the MPS property if
$\Gamma(x^*)=X^*$ for every $x^*$ in $X^*$, where
$\Gamma(x)=\arg \max_{y\in X} (x-y)^T H(x).$
\end{lem}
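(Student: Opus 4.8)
Since this lemma is quoted verbatim as Theorem~4.3 of \cite{Marcotte99weaksharp}, the paper itself need only cite it; were one to reconstruct a proof directly, the plan is to reduce the weak-sharpness estimate to the weak sharpness of minima of a linear program and then to control the error-bound constant uniformly over $X^*$. The starting point is the observation that, for a fixed $x^*\in X^*$, the affine functional $\ell_{x^*}(x)\triangleq (x-x^*)^T H(x^*)$ is exactly the object we must bound below. First I would record the variational characterization: $x^*\in X^*$ is equivalent to $\ell_{x^*}(x)\ge 0$ for all $x\in X$, so $\ell_{x^*}\ge 0$ on $X$ with $\ell_{x^*}(x^*)=0$, whence $\min_{y\in X}\ell_{x^*}(y)=0$. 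Since $\Gamma(x^*)=\arg\max_{y\in X}(x^*-y)^T H(x^*)=\arg\min_{y\in X}\ell_{x^*}(y)$ and the optimal value is $0$, we get $\Gamma(x^*)=\{y\in X:\ell_{x^*}(y)=0\}$, the optimal face of this linear program. Invoking the MPS hypothesis $\Gamma(x^*)=X^*$ then identifies $X^*$ precisely with $\arg\min_{y\in X}\ell_{x^*}(y)$; in particular $X^*=\{y\in X:\ell_{x^*}(y)=0\}$ is the intersection of the polytope $X$ with a hyperplane, hence a (compact, polyhedral) face of $X$ that, as a set, does not depend on the chosen $x^*$.

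With this reduction in hand, I would appeal to the weak sharpness of minima of a linear objective over a polyhedron, i.e.\ the linear-objective specialization of the theory in \cite{burke93weaksharp} (equivalently, a Hoffman error bound). Because $X$ is polyhedral and $X^*$ is the optimal face of $\min_{y\in X}\ell_{x^*}(y)$, the objective grows at least linearly with the distance to that face: there exists $\alpha_{x^*}>0$ with $\ell_{x^*}(x)=\ell_{x^*}(x)-0\ge \alpha_{x^*}\,\mathrm{dist}(x,X^*)$ for every $x\in X$. This is already the desired inequality $(x-x^*)^T H(x^*)\ge \alpha_{x^*}\,\mathrm{dist}(x,X^*)$, but with a constant that a priori depends on the chosen $x^*\in X^*$.

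The crux, and the step I expect to be the main obstacle, is upgrading the pointwise constants $\alpha_{x^*}$ to a single $\alpha>0$ valid for all $x^*\in X^*$ simultaneously. I would write the sharpest admissible constant as $\alpha_{x^*}=\inf_{x\in X\setminus X^*}\ell_{x^*}(x)/\mathrm{dist}(x,X^*)$ and exploit two structural facts: that $X^*$ is a single fixed face of $X$ (established above) and that the polytope $X$ has only finitely many faces, so the combinatorial geometry underlying the Hoffman bound ranges over a finite collection and cannot degenerate continuously. Using continuity of the map $x^*\mapsto H(x^*)$ together with compactness of $X^*$, the plan is to argue that $\inf_{x^*\in X^*}\alpha_{x^*}$ stays strictly positive: on the fixed face $X^*$ the ratio above is, for each $x^*$, bounded below by a strictly positive quantity depending continuously on $H(x^*)$, and the finiteness of the face structure rules out its collapse to $0$ as $x^*$ varies over the compact set $X^*$. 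Setting $\alpha\triangleq\inf_{x^*\in X^*}\alpha_{x^*}>0$ then yields the required uniform bound. As an alternative route that sidesteps this uniformity bookkeeping, one may recast weak sharpness as the interiority condition $-H(x^*)\in\mathrm{int}\big(T_X(x^*)\cap N_{X^*}(x^*)\big)^{\circ}$ holding uniformly on $X^*$, and show directly that the MPS property is equivalent to this interiority, which is the argument carried out in \cite{Marcotte99weaksharp}.
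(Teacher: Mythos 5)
You are right on the threshold point: the paper contains no proof of this lemma at all --- it is imported verbatim, by citation, as Theorem~4.3 of \cite{Marcotte99weaksharp}, and the text proceeds immediately to use it in Theorem~\ref{thm:stoch_monotone_VI_error}. So there is no in-paper argument to compare against, and your opening observation matches what the paper actually does.

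Your reconstruction is essentially sound, and its first half is airtight: the VI characterization gives $\ell_{x^*}\geq 0$ on $X$ with $\ell_{x^*}(x^*)=0$, so $\Gamma(x^*)$ is the optimal face of the LP $\min_{y\in X}\ell_{x^*}(y)$, MPS pins that face to be $X^*$ for every $x^*\in X^*$, and LP weak sharpness gives the pointwise constants $\alpha_{x^*}>0$. Two remarks on the uniformity step, which you correctly identify as the crux. First, it needs continuity of $H$, which you invoke but which is absent from the lemma as quoted (the paper's statement assumes only monotonicity); continuity is a standing assumption in \cite{Marcotte99weaksharp}, so this is a defect of the quoted statement rather than of your argument, but it must be assumed explicitly. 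Second, ``finiteness of the face structure rules out collapse'' glosses over a real semicontinuity issue: the sharp constant $\alpha_{x^*}=\inf_{x\in X\setminus X^*}\ell_{x^*}(x)/\mathrm{dist}(x,X^*)$ is an infimum of functions continuous in $H(x^*)$, hence only \emph{upper} semicontinuous in $x^*$, so pointwise positivity plus compactness of $X^*$ does not by itself keep the infimum away from zero. The repair uses exactly the structure you name, made quantitative: write $X=\mathrm{conv}(V)$ with $V$ finite; MPS forces $(v-x^*)^TH(x^*)>0$ for every vertex $v\in V\setminus X^*$ and $(v-x^*)^TH(x^*)=0$ for $v\in V\cap X^*$, so $g(x^*)\triangleq\min_{v\in V\setminus X^*}(v-x^*)^TH(x^*)$ is a finite minimum of continuous functions, hence continuous and positive on the compact set $X^*$, giving $\min_{X^*}g>0$; then for $x=\sum_v\lambda_v v$ one gets $(x-x^*)^TH(x^*)\geq g(x^*)\sum_{v\notin X^*}\lambda_v$ while $\mathrm{dist}(x,X^*)\leq \mathrm{diam}(X)\sum_{v\notin X^*}\lambda_v$, so $\alpha=\min_{X^*}g/\mathrm{diam}(X)$ works uniformly. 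With that lower-semicontinuity gap patched, your route goes through; the alternative you mention via the Burke--Ferris interiority characterization \cite{burke93weaksharp} is indeed closer to how the cited source argues.
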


By leveraging this property, we may estimate the convergence rate by
using averaging as in Theorem
\ref{thm:stoch_monotone_optim_error}.\vspace{0.2in}
\begin{thm} [{\bf Rate estimates under monotonicity of $F$}] \label{thm:stoch_monotone_VI_error}
Suppose (A1-4) and (A\ref{assump:filtration}) hold.
Suppose $\mathbb{E}[\|x^{k}-x^{*}\|^2]\leq M_x^2$, $\mathbb{E}[\|F(x^{k};\theta^{k})+ w^k \|^2]\leq M^2$  and $\mathbb{E}[\|G(\theta^{k})+ v^k \|^2]\leq M_{\theta}^2$  for all $x^{k}\in X$ and $\theta^{k}\in\Theta$.
Suppose $X$ is a compact polyhedral set, the solution set $X^*$ of VI$(X,\mathbb{E}[F(\bullet;\theta^*,\xi)])$ is nonempty, and $x^*$ is a point in $X^*$.
Suppose  VI$(X,\mathbb{E}[F(\bullet;\theta^*,\xi)])$ possesses the MPS property.
Let $\{x^k,\theta^k\}$ be computed via Algorithm \ref{alg:VI_stoch_grad_grad_strongly}.
For $1\leq i,t\leq k$, we define $v_t\triangleq\frac{\gamma_{x,t}}{\sum_{s=i}^k \gamma_{x,s}}$, $\tilde{x}_{i,k}\triangleq \sum_{t=i}^k v_t x^t$ and $D_X\triangleq\max_{x\in X}\|x-x^1\|$.
Suppose
 for $1\leq t\leq k$
$$\gamma_x = \sqrt{\frac{4D_X^2  + L_{\theta}^2Q_{\theta}(\lambda_{\theta}) (1+\ln k)}{(M^2 +  M_x^2)k}},$$
where $Q_{\theta}(\lambda_{\theta})\triangleq \max\left\{ \lambda_{\theta}^2 M_{\theta}^2 (2\mu_{\theta}\lambda_{\theta} - 1)^{-1}, \mathbb{E}[\|\theta^{1} - \theta^{*} \|^2 ] \right\}$,
and $\gamma_{\theta,k}=\lambda_{\theta}/k$ with
$\lambda_{\theta}>1/(2\mu_{\theta})$.  Then there exists a positive number $\alpha$ such that for
$1\leq i\leq k$:
\begin{align}
\notag\mathbb{E}\left[\alpha\;\mathrm{dist}(\tilde{x}_{i,k},X^*)\right]   &\leq  C_{i,k}\sqrt{\frac{ B_k}{k}},
\end{align}
where
$C_{i,k}=\frac{k}{k-i+1}$ and $B_k=(4D_X^2  + L_{\theta}^2Q_{\theta}(\lambda_{\theta}) (1+\ln k))(M^2 +  M_x^2)$.
\end{thm}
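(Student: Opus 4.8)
The plan is to mirror the argument for the optimization counterpart, Theorem~\ref{thm:stoch_monotone_optim_error}, replacing the convexity/gradient-inequality step by the weak-sharpness estimate furnished by the MPS property (Lemma~\ref{lem:weakly_sharp}). Since Algorithm~\ref{alg:VI_stoch_grad_grad_strongly} updates $x^{k+1}=\Pi_X(x^k-\gamma_{k,x}(F(x^k;\theta^k)+w^k))$, the same projection/non-expansivity manipulation that produced \eqref{alg:stoch_grad_grad_strongly_constant_a} yields, with $a_k\triangleq\frac12\mathbb{E}[\|x^k-x^*\|^2]$ and $\mathbb{E}[w^k\mid\mathcal{F}_k]=0$ (A\ref{assump:filtration}), the recursion $a_{k+1}\le a_k+\frac12\gamma_{k,x}^2 M^2-\gamma_{k,x}\mathbb{E}[(x^k-x^*)^T F(x^k;\theta^k)]$. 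The whole proof hinges on lower-bounding this inner-product term.

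First I would decompose $(x^k-x^*)^T F(x^k;\theta^k)$ into three pieces by inserting $\pm F(x^k;\theta^*)$ and $\pm F(x^*;\theta^*)$: the term $(x^k-x^*)^T F(x^*;\theta^*)$, the term $(x^k-x^*)^T(F(x^k;\theta^*)-F(x^*;\theta^*))$, and the perturbation term $(x^k-x^*)^T(F(x^k;\theta^k)-F(x^k;\theta^*))$. Monotonicity of $F(\bullet;\theta^*)$ (A1-4) makes the middle term nonnegative, and since $x^*\in X^*$ and $F(x^*;\theta^*)=\mathbb{E}[F(x^*;\theta^*,\xi)]$ is exactly the map value appearing in Lemma~\ref{lem:weakly_sharp}, the MPS property gives $(x^k-x^*)^T F(x^*;\theta^*)\ge\alpha\,\mathrm{dist}(x^k,X^*)$ for the promised constant $\alpha>0$. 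The perturbation term is controlled as in the optimization case: $\|F(x^k;\theta^k)-F(x^k;\theta^*)\|\le L_\theta\|\theta^k-\theta^*\|$ by Lipschitz continuity in $\theta$, so $2ab\le a^2+b^2$ together with $\mathbb{E}[\|x^k-x^*\|^2]\le M_x^2$ and the learning-rate bound \eqref{eq:error_theta_Q} give $-\gamma_{k,x}\mathbb{E}[(x^k-x^*)^T(F(x^k;\theta^k)-F(x^k;\theta^*))]\le\frac12\gamma_{k,x}^2 M_x^2+\frac12 L_\theta^2 Q_\theta(\lambda_\theta)/k$. Collecting terms reproduces exactly \eqref{eq:convex_constant_a} with the function-value gap $f(x^k;\theta^*)-f(x^*;\theta^*)$ replaced throughout by $\alpha\,\mathrm{dist}(x^k,X^*)$, namely $\gamma_{k,x}\alpha\,\mathbb{E}[\mathrm{dist}(x^k,X^*)]\le a_k-a_{k+1}+\frac12\gamma_{k,x}^2(M^2+M_x^2)+\frac12 L_\theta^2 Q_\theta(\lambda_\theta)/k$.

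From this point the remainder is identical to Theorem~\ref{thm:stoch_monotone_optim_error}: summing over $t=i,\dots,k$ telescopes the $a_t$ terms and bounds $\sum_{t=i}^k 1/t\le 1+\ln k$, giving the analog of \eqref{eq:optim_bound}; dividing by $\sum_{t=i}^k\gamma_{x,t}$ and using the weights $v_t$ produces the analog of \eqref{alg:stoch_grad_grad_monotone_constant_ab_final_average_convex}. The one place where convexity of $f$ was used — to pass from the averaged iterates to $\tilde{x}_{i,k}$ — is here replaced by convexity of the map $x\mapsto\mathrm{dist}(x,X^*)$; since the solution set of a monotone VI over a convex set is convex, this distance is a convex function, so Jensen's inequality gives $\mathrm{dist}(\tilde{x}_{i,k},X^*)\le\sum_{t=i}^k v_t\,\mathrm{dist}(x^t,X^*)$. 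Inserting $a_1\le\frac12 D_X^2$, $a_i\le 2D_X^2$, the constant steplength $\gamma_{x,t}=\gamma_x$, and the stated error-minimizing choice of $\gamma_x$ yields $\alpha\,\mathbb{E}[\mathrm{dist}(\tilde{x}_{1,k},X^*)]\le\sqrt{B_k/k}$, and the factor $C_{i,k}=k/(k-i+1)$ accounts for starting the averaging window at $i$ rather than $1$.

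The main obstacle, relative to the optimization proof, is conceptual rather than computational and lies entirely in the inner-product step: one must recognize that the weak-sharpness inequality of Lemma~\ref{lem:weakly_sharp} plays the exact role the gradient inequality did, producing $\alpha\,\mathrm{dist}(x^k,X^*)$ in place of a function-value gap, and that mere monotonicity (not strong monotonicity) suffices to discard the $F(x^k;\theta^*)-F(x^*;\theta^*)$ term. A pleasant simplification is that, unlike the optimization case, no final Lipschitz-in-$\theta$ correction (the $D_\theta\mathbb{E}[\|\theta^k-\theta^*\|]$ contribution) is required, because the performance metric $\mathrm{dist}(\tilde{x}_{i,k},X^*)$ does not depend on $\theta$; this is why the resulting bound carries no $D_\theta$ term.
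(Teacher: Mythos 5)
Your proposal is correct and follows essentially the same route as the paper's proof: the paper likewise starts from the recursion $a_{k+1}\le a_k+\frac12\gamma_{x,k}^2M^2-\gamma_{x,k}\mathbb{E}[(x^k-x^*)^TF(x^k;\theta^*)]-\gamma_{x,k}\mathbb{E}[(x^k-x^*)^T(F(x^k;\theta^k)-F(x^k;\theta^*))]$, invokes Lemma~\ref{lem:weakly_sharp} together with monotonicity to obtain $\alpha\,\mathrm{dist}(x^k,X^*)\le(x^k-x^*)^TF(x^*;\theta^*)\le(x^k-x^*)^TF(x^k;\theta^*)$, and then repeats the averaging argument of Theorem~\ref{thm:stoch_monotone_optim_error}, using convexity of $\mathrm{dist}(\cdot,X^*)$ (from convexity of $X^*$) in place of convexity of $f$. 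Your three-way splitting of the inner product and your observation that no $D_\theta$ correction is needed match the paper's argument exactly, merely packaged in a single decomposition rather than two steps.
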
\vspace{0.2in}
\begin{proof}
By using the same notation in Theorem  \ref{thm:stoch_monotone_optim_error} except that we replace $\nabla_x f$ and $\nabla_{\theta} g$ by $F$ and $G$, respectively, we have from \eqref{alg:stoch_grad_grad_monotone_constant_a} that
\begin{align}
\begin{aligned} \label{alg:VI_stoch_grad_grad_monotone_constant_a}
  a_{k+1}
    & \leq a_k + \frac{1}{2} \gamma_{x,k}^2 M^2 - \gamma_{x,k}
   \mathbb{E}[(x^{k}-x^{*})^T F(x^{k};\theta^{*})] -   \gamma_{x,k} \mathbb{E}[(x^{k}-x^{*})^T (F(x^{k};\theta^{k})
		   - F(x^{k};\theta^{*}) ) ].
\end{aligned}
\end{align}
By Lemma \ref{lem:weakly_sharp}, we have that
there exists a positive number $\alpha$ such that
\begin{align}
\begin{aligned}\label{eq:VI_MPS}
\alpha\; \mathrm{dist}(x^k,X^*)& \leq (x^k-x^*)^T F(x^*;\theta^*)
 = (x^k-x^*)^T F(x^k;\theta^*) - (x^k-x^*)^T (F(x^k;\theta^*)-F(x^*;\theta^*)) \\
  & \leq (x^k-x^*)^T F(x^k;\theta^*),
\end{aligned}
\end{align}
where the last inequality follows from the monotonicity of $F(\bullet;\theta^*)$ in ($\bullet$).
Combining \eqref{alg:VI_stoch_grad_grad_monotone_constant_a} and \eqref{eq:VI_MPS},
\begin{align}
\begin{aligned} \label{eq:combine_sharp}
  \alpha \gamma_{x,k} \mathbb{E}[\mathrm{dist}(x^k,X^*)]
  & \leq \gamma_{x,k}
   \mathbb{E}[(x^{k}-x^{*})^T F(x^{k};\theta^{*})] \\
   &\leq   a_k -a_{k+1} + \frac{1}{2} \gamma_{x,k}^2 M^2 -   \gamma_{x,k} \mathbb{E}[(x^{k}-x^{*})^T (F(x^{k};\theta^{k})
		   - F(x^{k};\theta^{*}) ) ].
\end{aligned}
\end{align}
Next, we follow the same proof method in Theorem  \ref{thm:stoch_monotone_optim_error}.
We define $v_t\triangleq\frac{\gamma_{x,t}}{\sum_{s=i}^k
	\gamma_{x,s}}$ and $D_X\triangleq\displaystyle \max_{x\in X}\|x-x^1\|$.
It follows from \eqref{alg:stoch_grad_grad_monotone_constant_ab_final_average} and \eqref{eq:combine_sharp} that
\begin{align}
\begin{aligned}  \label{alg:stoch_grad_grad_monotone_constant_ab_final_average_sharp}
 \mathbb{E}\left[\alpha\sum_{t=i}^kv_t \mathrm{dist}(x^t,X^*) \right]&
   \leq  \frac{a_i  +\frac{1}{2} \sum_{t=i}^k \gamma_{x,t}^2 (M^2 +  M_x^2) +  \frac{1}{2} L_{\theta}^2Q_{\theta}(\lambda_{\theta}) (1+\ln k)}{\sum_{t=i}^k\gamma_{x,t}}.
\end{aligned}
\end{align}
Next, we consider points given by $\tilde{x}_{i,k}\triangleq \sum_{t=i}^k v_t x^t$.
Since $F(x;\theta^*)$ is monotone in $x$, we have that $X^*$ is convex, which implies that $\mathrm{dist}(x,X^*)$ is convex in $x$.
So, we get
$\mathrm{dist}(\tilde{x}_{i,k},X^*)  \leq \sum_{t=i}^k v_t \mathrm{dist}(x^t,X^*) $.
It follows from \eqref{alg:stoch_grad_grad_monotone_constant_ab_final_average_convex} and \eqref{alg:stoch_grad_grad_monotone_constant_ab_final_average_sharp} that for $1\leq i \leq k$
\begin{align} \label{eq:average_sharp}
  \quad\ \mathbb{E}\left[\alpha\;\mathrm{dist}(\tilde{x}_{i,k},X^*)\right]  \leq  \frac{4D_X^2   +\sum_{t=i}^k \gamma_{x,t}^2 (M^2 +  M_x^2) + L_{\theta}^2Q_{\theta}(\lambda_{\theta}) (1+\ln k)}{2\sum_{t=i}^k\gamma_{x,t}}.
\end{align}
Suppose $\gamma_{x,t}=\gamma_{x}$ for $t=1,\ldots,k$. If we follow the same proof method in Theorem  \ref{thm:stoch_monotone_optim_error},
then we can get from \eqref{eq:error_monotone_f} and \eqref{eq:average_sharp} that
\begin{align*} 
   \mathbb{E}\left[\alpha\;\mathrm{dist}(\tilde{x}_{i,k},X^*)\right]   &\leq  C_{i,k}\sqrt{\frac{ B_k}{k}}.
\end{align*}
\end{proof}

{\indent The following corollary is a special case of Theorem \ref{thm:stoch_monotone_VI_error}, an avenue that has been adopted in \cite{Nemirovski09}.}\\

\begin{cor} [{\bf Rate estimates under monotonicity of $F$}]
Suppose (A1-4) and (A\ref{assump:filtration}) hold.
Suppose $\mathbb{E}[\|x^{k}-x^{*}\|^2]\leq M_x^2$, $\mathbb{E}[\|F(x^{k};\theta^{k})+ w^k \|^2]\leq M^2$  and $\mathbb{E}[\|G(\theta^{k})+ v^k \|^2]\leq M_{\theta}^2$  for all $x^{k}\in X$ and $\theta^{k}\in\Theta$.
Suppose $X$ is a compact polyhedral set, the solution set $X^*$ of VI$(X,\mathbb{E}[F(\bullet;\theta^*,\xi)])$ is nonempty, and $x^*$ is a point in $X^*$.
Suppose  VI$(X,\mathbb{E}[F(\bullet;\theta^*,\xi)])$ possesses the MPS property.
Let $\{x^k,\theta^k\}$ be computed via Algorithm \ref{alg:VI_stoch_grad_grad_strongly}.
For $k/2\leq t\leq k$, we define $v_t\triangleq\frac{\gamma_{x,t}}{\sum_{s=k/2}^k \gamma_{x,s}}$, $\tilde{x}_{k/2,k}\triangleq \sum_{t=k/2}^k v_t x^t$ and $D_X\triangleq\max_{x\in X}\|x-x^1\|$.
Suppose
 for $1\leq t\leq k$
$$\gamma_x = \sqrt{\frac{4D_X^2  + L_{\theta}^2Q_{\theta}(\lambda_{\theta}) (1+\ln 2)}{(M^2 +  M_x^2)k}},$$
where $Q_{\theta}(\lambda_{\theta})\triangleq \max\left\{ \lambda_{\theta}^2 M_{\theta}^2 (2\mu_{\theta}\lambda_{\theta} - 1)^{-1}, \mathbb{E}[\|\theta^{1} - \theta^{*} \|^2 ] \right\}$,
and $\gamma_{\theta,k}=\lambda_{\theta}/k$ with
$\lambda_{\theta}>1/(2\mu_{\theta})$.  Then there exists a positive number $\alpha$ such that
\begin{align}
\notag\mathbb{E}\left[\alpha\;\mathrm{dist}(\tilde{x}_{k/2,k},X^*)\right]   &\leq  2\sqrt{\frac{ B }{k}},
\end{align}
where
$B=(4D_X^2  + L_{\theta}^2Q_{\theta}(\lambda_{\theta}) (1+\ln 2))(M^2 +  M_x^2)$.
\end{cor}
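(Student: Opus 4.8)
The plan is to run the argument of Theorem~\ref{thm:stoch_monotone_VI_error} essentially unchanged, replacing the averaging window $\{1,\dots,k\}$ by $\{k/2,\dots,k\}$, in exact analogy with the passage from Theorem~\ref{thm:stoch_monotone_optim_error} to Corollary~\ref{cor:optim}. Every structural ingredient is already in place and can be reused verbatim: the per-iterate recursion \eqref{alg:VI_stoch_grad_grad_monotone_constant_a}, the weak-sharpness inequality \eqref{eq:VI_MPS} obtained from the MPS property via Lemma~\ref{lem:weakly_sharp}, the splitting of the $\theta$-cross term by Young's inequality together with the Lipschitz continuity of $F$ in $\theta$ under (A1-4), and the learning-error estimate \eqref{eq:error_theta_Q} for $\mathbb{E}[\|\theta^k-\theta^*\|^2]$. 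Summing \eqref{eq:combine_sharp} over $t=k/2,\dots,k$ rather than $t=1,\dots,k$ therefore yields the analogue of \eqref{alg:stoch_grad_grad_monotone_constant_ab_final_average_sharp} with all sums ranging over the truncated window.

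The only place where the truncation produces a genuine improvement is the harmonic sum that accumulates the learning error $\sum_{t}\tfrac12 L_\theta^2 Q_\theta(\lambda_\theta)/t$. Here I would use
\[
  \sum_{t=k/2}^{k}\frac{1}{t}\;\le\;\sum_{t=1}^{k}\frac{1}{t}-\sum_{t=1}^{k/2-1}\frac{1}{t}\;\le\;(1+\ln k)-\ln(k/2)\;=\;1+\ln 2,
\]
so the logarithmically growing factor $(1+\ln k)$ appearing in $B_k$ collapses to the $k$-independent constant $(1+\ln 2)$, and the numerator of the averaged bound becomes the constant $B=(4D_X^2+L_\theta^2 Q_\theta(\lambda_\theta)(1+\ln 2))(M^2+M_x^2)$ after optimizing over the constant steplength $\gamma_x$.

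To finish, I would proceed exactly as in Theorem~\ref{thm:stoch_monotone_VI_error}: since $F(\bullet;\theta^*)$ is monotone the solution set $X^*$ is convex, hence $\mathrm{dist}(\bullet,X^*)$ is convex and Jensen's inequality gives $\mathrm{dist}(\tilde{x}_{k/2,k},X^*)\le\sum_{t=k/2}^{k} v_t\,\mathrm{dist}(x^t,X^*)$. Minimizing the full-window bound over $\gamma_x>0$ reproduces the steplength stated in the corollary and the estimate $\sqrt{B/k}$, while the window-dependent prefactor is $C_{k/2,k}=\tfrac{k}{k-k/2+1}=\tfrac{k}{k/2+1}\le 2$, which delivers $\mathbb{E}[\alpha\,\mathrm{dist}(\tilde{x}_{k/2,k},X^*)]\le 2\sqrt{B/k}$ as claimed. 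I do not anticipate a substantive obstacle, since the corollary is a direct specialization; the only points requiring care are verifying that the shifted harmonic sum is controlled by the constant $1+\ln 2$ (rather than inadvertently reintroducing a $\ln k$ dependence through the lower endpoint) and confirming that the prefactor $C_{k/2,k}$ is uniformly bounded by $2$ for all even $k$.
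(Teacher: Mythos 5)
Your proposal is correct and follows essentially the same route as the paper: the paper's proof of this corollary simply invokes the truncated-window harmonic-sum argument of Corollary~\ref{cor:optim} (which yields the $1+\ln 2$ constant exactly as you computed) together with the machinery of Theorem~\ref{thm:stoch_monotone_VI_error} (MPS/weak-sharpness, Jensen on the convex distance function, optimization over the constant steplength, and the prefactor bound $C_{k/2,k}\le 2$). Your write-up in fact spells out the shifted harmonic-sum estimate more carefully than the paper does.
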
\vspace{0.2in}
\begin{proof}{When $i=k/2$ where $k$ is a positive even number,  then by utilizing the same approach as in Corollary \ref{cor:optim}, inequality \eqref{eq:average_sharp} becomes the following:
\begin{align}
  \quad\ \mathbb{E}\left[\alpha\;\mathrm{dist}(\tilde{x}_{k/2,k},X^*)\right]  \leq  \frac{4D_X^2   +\sum_{t=k/2}^k \gamma_{x,t}^2 (M^2 +  M_x^2) + L_{\theta}^2Q_{\theta}(\lambda_{\theta}) (1+\ln 2)}{2\sum_{t=k/2}^k\gamma_{x,t}}.
\end{align}
}{
Suppose $\gamma_{x,t}=\gamma_{x}$ for $t=1,\ldots,k$. By utilizing the same techniques as in Theorem  \ref{thm:stoch_monotone_VI_error},
then we obtain the following bound:
\begin{align*} 
   \mathbb{E}\left[\alpha\;\mathrm{dist}(\tilde{x}_{k/2,k},X^*)\right]   &\leq  2\sqrt{\frac{ B }{k}},
\end{align*}
where $B \triangleq (4D_X^2  + L_{\theta}^2Q_{\theta}(\lambda_{\theta}) (1+\ln 2))(M^2 +  M_x^2)$.
}
\end{proof}

Next, we present a constant steplength error bound.
\vspace{0.2in}
\begin{prop}[{\bf Constant steplength error bound}] 
	Suppose (A\ref{assump:filtration}) holds. Suppose $\gamma_{\theta,k} := \gamma_{\theta}$
and $\gamma_{x,k} := \gamma_{x}$.  Suppose $\mathbb{E}[\|x^{k}-x^{*}\|^2]\leq M_x^2$ and
$\mathbb{E}[F(x^{k};\theta^{k})+ w^k \|^2]\leq M^2$    for all
$x^k\in X$.
Suppose $A_k \triangleq \frac{1}{2} \|x^k - x^*\|^2$ and $a_k \triangleq \mathbb{E}[A_k]$.
Suppose $X$ is a compact polyhedral set, the solution set $X^*$ of VI$(X,\mathbb{E}[F(\bullet;\theta^*,\xi)])$ is nonempty, and $x^*$ is a point in $X^*$.
Suppose  VI$(X,\mathbb{E}[F(\bullet;\theta^*,\xi)])$ possesses the MPS property.
Let $\{x^k,\theta^k\}$
be computed via Algorithm \ref{alg:stoch_strongly_optim}.
\begin{itemize}
  \item[(i)] Suppose (A1-3) holds. Then, the following holds:
  \begin{align*}
 \limsup_{k \to \infty}   a_{k} &  \leq  \frac{1}{2\mu_x} \gamma_{x} M^2  +
	 \frac{1}{2}\frac{1}{\mu^2_x}L_{\theta}^2 \frac{ \gamma_\theta
		\nu_{\theta}^2}{2\mu_{\theta} - \gamma_{\theta} C_{\theta}^2};
		\end{align*}
  \item[(ii)] Suppose (A1-4) holds. Then,   there exists a positive number $\alpha$ such that:
 \begin{align}
\notag & \quad\   \limsup_{k \to \infty} \mathbb{E}[\mathrm{dist}(x^k,X^*)]    \leq \frac{1}{\alpha} \left[\frac{1}{2} \gamma_{x} M^2 + \frac{1}{2}\gamma_{x}^{1-\tau}  M_x^2 +\frac{1}{2} \gamma_{x}^{\tau-1} L_{\theta}^2 \frac{ \gamma_{ \theta}
		\nu_{\theta}^2}{2\mu_{\theta} - \gamma_{ \theta} C_{\theta}^2} \right],
\end{align}
where $0<\tau<1$.
\end{itemize}
\end{prop}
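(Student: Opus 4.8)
The plan is to mirror the constant-steplength argument of Proposition~\ref{prop:error_optim}, substituting the map $F$ for $\nabla_x f$ and $G$ for $\nabla_\theta g$, and replacing the role played by convexity and the gradient inequality with strong monotonicity in part (i) and with the weak-sharpness estimate of Lemma~\ref{lem:weakly_sharp} in part (ii). The common starting point for both parts is the learning bound: setting $\gamma_{\theta,k}:=\gamma_\theta$ in the $\theta$-recursion \eqref{eq:VI_grad_grad_monotone_rand_exp_final_theta} and choosing $\gamma_\theta$ small enough that $q_\theta^2 = 1-2\gamma_\theta\mu_\theta+\gamma_\theta^2 C_\theta^2 < 1$, one takes expectations and a limit supremum to obtain $\limsup_k \mathbb{E}[\|\theta^k-\theta^*\|^2]\leq \frac{\gamma_\theta\nu_\theta^2}{2\mu_\theta-\gamma_\theta C_\theta^2}$, exactly as in \eqref{eq:theta_bound}. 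This estimate feeds the $x$-recursion in both cases.

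For part (i), I would begin from the non-expansivity bound $A_{k+1}\leq A_k + \frac{1}{2}\gamma_x^2\|F(x^k;\theta^k)+w^k\|^2 - \gamma_x(x^k-x^*)^T(F(x^k;\theta^k)+w^k)$, take conditional expectation (annihilating the $w^k$ inner product via Assumption~\ref{assump:filtration}), and decompose $(x^k-x^*)^T F(x^k;\theta^k)$ into the three pieces involving $F(x^k;\theta^k)-F(x^*;\theta^k)$, $F(x^*;\theta^k)-F(x^*;\theta^*)$, and $F(x^*;\theta^*)$. Strong monotonicity of $F(\cdot;\theta^k)$ lower-bounds the first by $\mu_x\|x^k-x^*\|^2$; the variational inequality satisfied by $x^*$ makes the third nonnegative since $x^k\in X$; and the middle cross term is controlled by Young's inequality together with the $\theta$-Lipschitz bound $\|F(x^*;\theta^k)-F(x^*;\theta^*)\|\leq L_\theta\|\theta^k-\theta^*\|$. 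This yields a recursion $a_{k+1}\leq(1-\gamma_x\mu_x)a_k + \frac{1}{2}\gamma_x^2 M^2 + \frac{1}{2}\frac{\gamma_x}{\mu_x}L_\theta^2\mathbb{E}[\|\theta^k-\theta^*\|^2]$ identical in structure to \eqref{eq:strongly_constant_a}. Taking limsup (valid for $\gamma_x<1/\mu_x$, so the contraction factor lies in $(0,1)$, and using that $\{a_k\}$ is bounded by $M_x^2/2$ so $\limsup_k a_{k+1}=\limsup_k a_k$) and inserting the learning bound gives $\gamma_x\mu_x\limsup_k a_k \leq \frac{1}{2}\gamma_x^2 M^2 + \frac{1}{2}\frac{\gamma_x}{\mu_x}L_\theta^2\frac{\gamma_\theta\nu_\theta^2}{2\mu_\theta-\gamma_\theta C_\theta^2}$, which is the claimed bound after dividing by $\gamma_x\mu_x$.

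For part (ii), strong monotonicity is unavailable, so I would instead split $(x^k-x^*)^T F(x^k;\theta^k) = (x^k-x^*)^T F(x^k;\theta^*) + (x^k-x^*)^T(F(x^k;\theta^k)-F(x^k;\theta^*))$. The first term is bounded below by the MPS estimate \eqref{eq:VI_MPS} of Lemma~\ref{lem:weakly_sharp}, namely $(x^k-x^*)^T F(x^k;\theta^*)\geq \alpha\,\mathrm{dist}(x^k,X^*)$, while the cross term is handled by $2ab\leq \gamma_x^{1-\tau}a^2 + \gamma_x^{\tau-1}b^2$ together with $\mathbb{E}[\|x^k-x^*\|^2]\leq M_x^2$ and the $\theta$-Lipschitz bound, exactly paralleling \eqref{eq:convex_constant_a}. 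Dividing through by $\gamma_x$ produces $\alpha\,\mathbb{E}[\mathrm{dist}(x^k,X^*)]\leq \frac{a_k-a_{k+1}}{\gamma_x} + \frac{1}{2}\gamma_x M^2 + \frac{1}{2}\gamma_x^{1-\tau}M_x^2 + \frac{1}{2}\gamma_x^{\tau-1}L_\theta^2\mathbb{E}[\|\theta^k-\theta^*\|^2]$; taking limsup, discarding the telescoping contribution $\limsup_k(a_k-a_{k+1})=0$ (legitimate because $\{a_k\}$ is bounded by $M_x^2/2$), substituting the learning bound, and dividing by $\alpha$ delivers the stated inequality.

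The main obstacle is part (ii): one must verify that the MPS/weak-sharpness hypothesis genuinely supplies the uniform linear lower bound $(x^k-x^*)^T F(x^k;\theta^*)\geq \alpha\,\mathrm{dist}(x^k,X^*)$ for all $k$, which is where compactness of the polyhedral $X$ and monotonicity of $F(\cdot;\theta^*)$ enter through Lemma~\ref{lem:weakly_sharp}, and one must justify the limit-supremum manipulation across the telescoping difference $a_k-a_{k+1}$ via the a priori boundedness of $\{a_k\}$. The exponent bookkeeping in the $\gamma_x^{2-\tau}/\gamma_x^{\tau}$ splitting and the derivation of the learning bound are otherwise routine transcriptions of the optimization argument.
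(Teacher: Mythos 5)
Your proposal is correct and takes essentially the same route as the paper: the constant-steplength $\theta$-bound \eqref{eq:theta_bound}, the strong-monotonicity decomposition (with the $x^*$-variational inequality and Young's inequality) for part (i), and the MPS estimate \eqref{eq:VI_MPS}/\eqref{eq:combine_sharp} with the $\gamma_x^{2-\tau}/\gamma_x^{\tau}$ splitting for part (ii) are exactly the paper's ingredients. One caveat: your claim that $\limsup_k(a_k-a_{k+1})=0$ for a bounded sequence is not literally true (an oscillating $\{a_k\}$ gives a positive value, so strictly one only controls the liminf or a Ces\`aro average), but this is precisely the same looseness as the paper's own step of writing $\limsup_k a_k - \limsup_k a_{k+1}$ on the right-hand side, so your argument matches the paper's at its level of rigor.
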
\
\begin{proof}
If we replace
$\nabla_x f$ and $\nabla_{\theta} g$ by $F$ and $G$ in Proposition
\ref{prop:error_optim}, we obtain that
\begin{align*}
  \limsup_{k \to \infty} \mathbb{E}[\|\theta^k - \theta^*\|^2] \leq \frac{ \gamma_{ \theta}
		\nu_{\theta}^2}{2\mu_{\theta} - \gamma_{ \theta} C_{\theta}^2},
\end{align*}
and the following can be derived based on the properties of $F$:
\begin{enumerate}
\item[(i)] $F$ is strongly monotone:
\begin{align*}
 \limsup_{k \to \infty}   a_{k} &  \leq  \frac{1}{2\mu_x} \gamma_{x} M^2  +
	 \frac{1}{2}\frac{1}{\mu^2_x}L_{\theta}^2 \frac{ \gamma_\theta
		\nu_{\theta}^2}{2\mu_{\theta} - \gamma_{\theta} C_{\theta}^2};
		\end{align*}
\item[(ii)] $f$ is convex:
From \eqref{eq:combine_sharp}, for $\gamma_{x,k} := \gamma_x$, we have that there exists a positive number $\alpha$ such that:
\begin{align*}
  \alpha \gamma_{x} \mathbb{E}[\mathrm{dist}(x^k,X^*)]
  & \leq \gamma_{x}
   \mathbb{E}[(x^{k}-x^{*})^T F(x^{k};\theta^{*})] \\
   &\leq   a_k -a_{k+1} + \frac{1}{2} \gamma_{x}^2 M^2 -   \gamma_{x} \mathbb{E}[(x^{k}-x^{*})^T (F(x^{k};\theta^{k})
		   - F(x^{k};\theta^{*}) ) ]\\
&\leq a_k -a_{k+1}+ \frac{1}{2} \gamma_{x}^2 M^2 +
\frac{1}{2}\gamma_{x}^{2-\tau} \mathbb{E}[\|x^{k}-x^{*}\|^2]  +\frac{1}{2}\gamma_{x}^{\tau}
\mathbb{E}[\|F(x^{k};\theta^{k}) - F(x^{k};\theta^{*}) \|^2 ] \\
\notag    &\leq a_k -a_{k+1}+ \frac{1}{2} \gamma_{x}^2 M^2 + \frac{1}{2}\gamma_{x}^{2-\tau} M_x^2 +\frac{1}{2} \gamma_{x}^{\tau} L_{\theta}^2\mathbb{E}[\|\theta^{k} -  \theta^{*} \|^2],
\end{align*}
where $0<\tau<1$.
It follows that
\begin{align*}
 \alpha \gamma_{x} \mathbb{E}[\mathrm{dist}(x^k,X^*)]    &\leq \limsup_{k \to \infty}  a_k - \limsup_{k \to \infty}  a_{k+1} + \frac{1}{2} \gamma_{x}^2 M^2 + \frac{1}{2}\gamma_{x}^{2-  \tau } M_x^2+\frac{1}{2}\gamma_{x}^{\tau} L_{\theta}^2\limsup_{k \to \infty} \mathbb{E}[\|\theta^{k} -  \theta^{*} \|^2] \\
\notag    &\leq  \frac{1}{2} \gamma_{x}^2 M^2 + \frac{1}{2}\gamma_{x}^{2-\tau} M_x^2 +\frac{1}{2}\gamma_{x}^{\tau} L_{\theta}^2 \frac{ \gamma_{ \theta}
		\nu_{\theta}^2}{2\mu_{\theta} - \gamma_{ \theta} C_{\theta}^2}.
\end{align*}
It follows that
\begin{align}
\notag & \quad\   \limsup_{k \to \infty} \mathbb{E}[\mathrm{dist}(x^k,X^*)]    \leq \frac{1}{\alpha} \left[\frac{1}{2} \gamma_{x} M^2 + \frac{1}{2}\gamma_{x}^{1-\tau}  M_x^2 +\frac{1}{2} \gamma_{x}^{\tau-1} L_{\theta}^2 \frac{ \gamma_{ \theta}
		\nu_{\theta}^2}{2\mu_{\theta} - \gamma_{ \theta} C_{\theta}^2} \right].
\end{align}

\end{enumerate}
\end{proof}

\section{Numerical results} \label{sec:IV}

In this section, we apply the developed algorithms
on a class of misspecified economic dispatch problems described in Section~\ref{sec:num_prob}.
In Section~\ref{sec:num_comp}, we apply the proposed schemes for
purposes of learning optimal solutions and the misspecified parameters.
Note that the simulations were carried
out on Tomlab 7.4.  The complementarity solver
\texttt{PATH}~\cite{Ferris98complementarity} was utilized for obtaining
solutions to these problems which subsequently formed the basis for
comparison.

\subsection{Problem description} \label{sec:num_prob}

We consider a  setting where there are $N$
firms competing over a $W$-node network.  Firm $f$ may produce and sell
its good at node $i$, where $f=1,\ldots,N$ and
 $i=1,\ldots,W$.  We assume that for a given firm $f$, the cost of
 generating $x_{fi}$ units of power at node $i$ is random and is given
 by $c_{fi}(x_{fi})=d_{fi}x_{fi}^2+h_{fi}x_{fi}+\xi_{fi}$, where $d_{fi}$ and $h_{fi}$ are positive
parameters, and $\xi_{fi}$ is a random variable with mean zero  for all $f$ and $i$. Furthermore, the generation level associated with
 firm $f$ is bounded by its  production capacity,  which is denoted by $\textrm{cap}_{fi}$.
The aggregate sales of all firms at node $i$ has to satisfy the demand $D_i$ at node $i$.  A given firm can produce at any node and then sell at different
nodes, provided that the aggregate production at all nodes matches the
aggregate sales at all nodes for each firm. For simplicity, we assume
that there is no limit of sales at any node.  Then, the resulting
problem faced by the grid operator can be stated as follows:
\begin{align}
 \begin{aligned}  \label{prob:nash_network}
    \min_{ x_{fi} \geq 0} \quad & \mathbb{E}\left[\sum_{f=1}^N\sum_{i=1}^W c_{fi}(x_{fi})\right]\\
    \st \quad &  x_{fi} \leq \textrm{cap}_{fi}, \qquad\qquad \textrm{for all } f,i \\
    & \sum_{f=1}^N x_{fi} = D_i.
\end{aligned}
\end{align}
The resulting optimal solution is given by $x^*$. {Suppose firm $f$ generates $y_{fi}$ units of power at node $i$. We use $c_{fi}(y_{fi})=d_{fi}(y_{fi})^2+h_{fi}y_{fi}+\xi_{fi}$ to denote the
cost associated with firm $f$ at node $i$.
The operator will solve the following (regularized) problem to estimate $c_{fi}$ and $d_{fi}$:
\begin{align} \label{prob:nash_network_theta}
 \begin{aligned}
    \min_{ \{d_{fi} ,h_{f,i}\}\in \Theta} \quad & \mathbb{E}\left[   ( d_{fi}(y_{fi})^2+h_{fi}y_{fi} - c_{fi}(y_{fi}) )^2 +
     \mu_{\theta}   d_{fi}^2 +  \mu_{\theta}
	  h_{fi}^2\right].
\end{aligned}
\end{align}
The resulting optimal solution is given by $\theta^*$.
We assume that $y_{fi}$ is distributed as per a uniform distribution
and is specified  by $y_{fi}\sim U[0, \textrm{cap}_{fi}]$,
while that the noise $\xi_{fi}$ is distributed as per a uniform distribution
and is specified  by $\xi_{fi}\sim U[-\theta_{fi}^*/2,\theta_{fi}^*/2]$.}
\subsection{Results} \label{sec:num_comp}
In this subsection, we employ Algorithm \ref{alg:stoch_strongly_optim}  proposed in Section \ref{sec:II} for  learning parameters and computing optimal solutions.
We will examine the behavior and error bounds of the algorithm.
\subsubsection{Behavior of the algorithm}
In this part, we consider a special case when $N=5$ and $W=5$.
Suppose, the noise $\xi$ is distributed as per a uniform distribution
and is specified  by $\xi\sim U[-\theta^*/2,\theta^*/2]$. Suppose
the steplength sequences $\{\gamma_{k,x}\}$ and $\{\gamma_{k,\theta}\}$ are chosen according to
Proportion \ref{thm:stoch_strongly_optim_error}: {$\gamma_{k,x}=1/k$ and $ \gamma_{k,\theta}=40/k$}.
Figure \ref{figure:behavior} illustrates the  scaled error of the learning scheme
when the number of steps increases.
\begin{figure}[htb]
\subfigure[Normalized error]{  \begin{minipage}[b]{.49\textwidth}
    \centering
    \includegraphics[width=\textwidth]{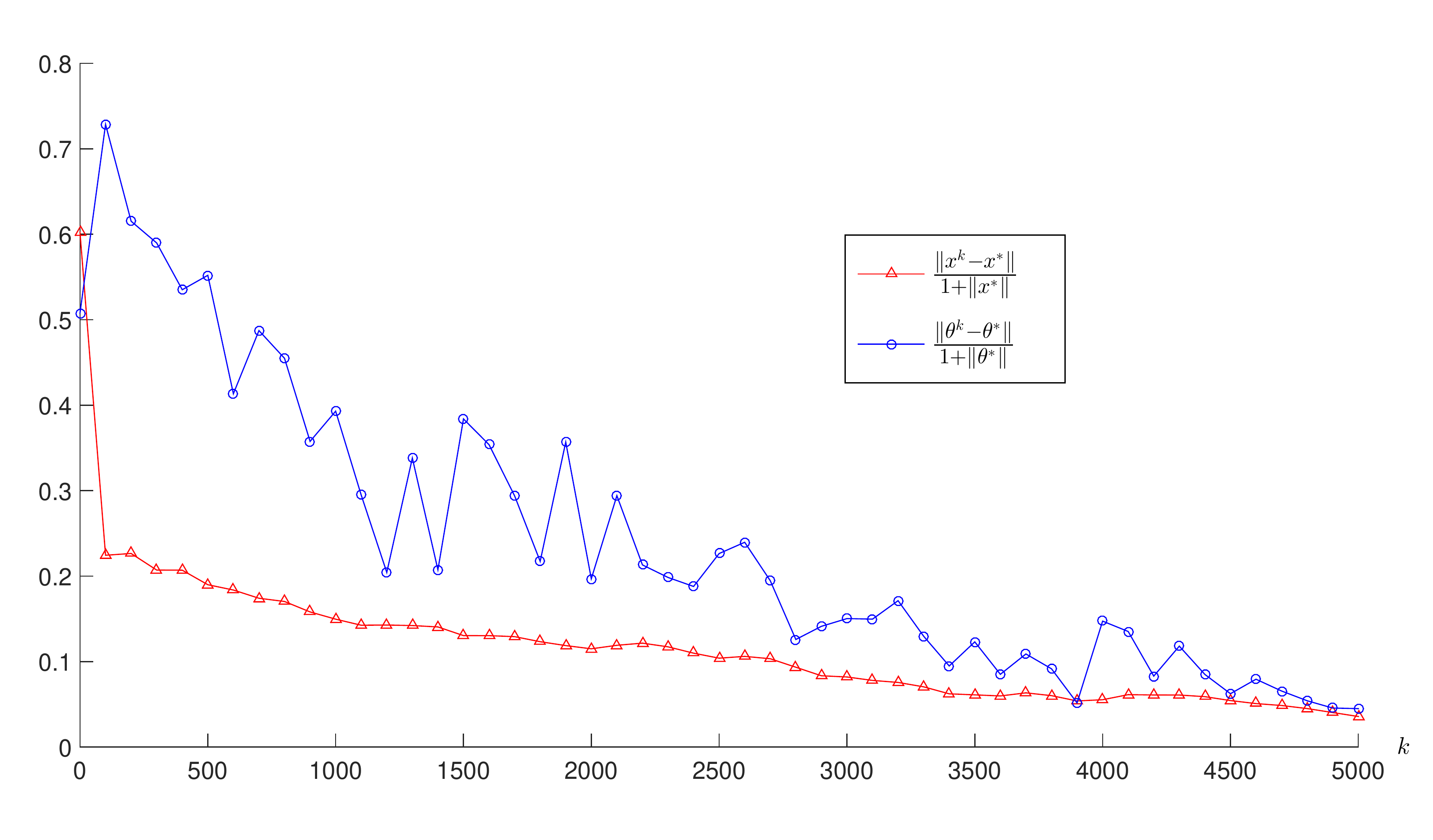}
   \label{figure:behavior}
  \end{minipage}}
\subfigure[Normalized regret]{  \begin{minipage}[b]{0.49\textwidth}
    \centering
    \includegraphics[width= \textwidth]{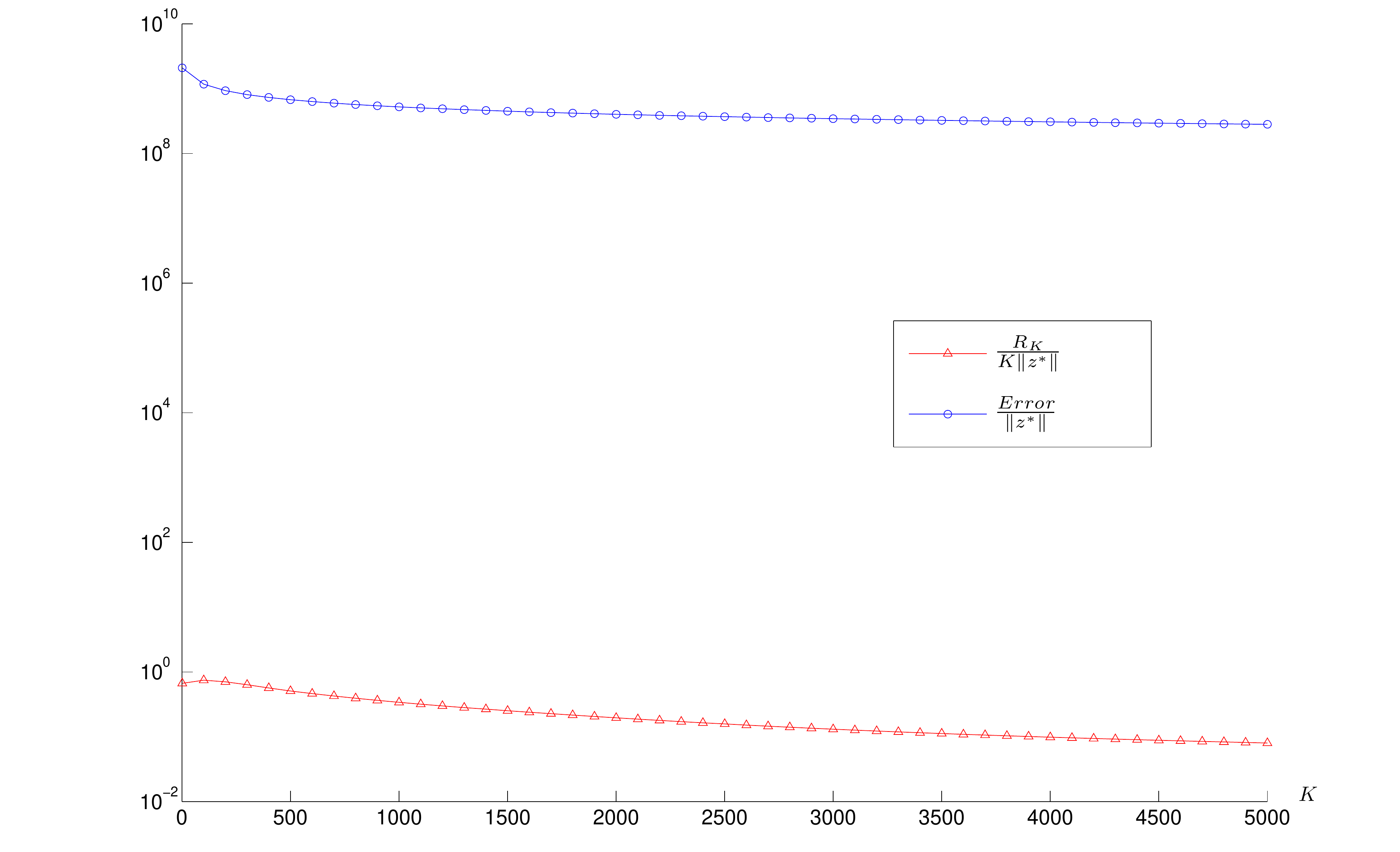}
   \label{figure:Regret_norm}
  \end{minipage}}
  \caption{Computing $x^*$ and learning $\theta^*$ ($\xi\sim U[-{\theta}^*/2,{\theta}^*/2]$,
			$N=5$, $W=5$)}
\end{figure}

\subsubsection{Error bounds}
In this part, we examine the errors of the algorithm and compare them with the theoretical error bounds proposed in Section \ref{sec:II}.
Suppose, the noise $\xi$ is distributed as per a uniform distribution
and is specified  by $\xi\sim U[-\theta^*/2,\theta^*/2]$.
\begin{itemize}
  \item[(a)]
  In the strongly convex regime, suppose
the steplength sequences $\{\gamma_{k,x}\}$ and $\{\gamma_{k,\theta}\}$ are chosen according to
Proportion \ref{thm:stoch_strongly_optim_error}: {$\gamma_{k,x}=1/k$ and $ \gamma_{k,\theta}=40/k $}.
We use \textrm{ERR} to denote the theoretical error provided in Proportion \ref{thm:stoch_strongly_optim_error}.
The algorithm was terminated at $K=10000$.  Table
	\ref{table:optim_error} (L) shows the scaled errors of the learning scheme.
  \item[(b)] In the merely convex regime, suppose the steplength $\gamma_{x} $ and
the steplength sequence  $\{\gamma_{k,\theta}\}$ are chosen according to
Theorem \ref{thm:stoch_monotone_optim_error}:  $ \gamma_{x} $ is chosen by Table \ref{table:optim_error} (R) and {$\gamma_{k,\theta}=40/k$}.
We use \textrm{ERR} to denote the theoretical error provided in Theorem \ref{thm:stoch_monotone_optim_error} while $z^*$ denotes $f(x^*;\theta^*)$.
The algorithm was terminated at $K=10000$ and Table \ref{table:optim_error}(R) shows the scaled errors of the learning scheme.

  \item[(c)]

  Suppose
the steplength sequences $\{\gamma_{k,x}\}$ and  $\{\gamma_{k,\theta}\}$ are chosen according to
Theorem \ref{thm:regret}:  {$\gamma_{k,x}=k^{-\alpha}$ and $ \gamma_{k,\theta}=40/k $}.
We employ \textrm{ERR} to denote the theoretical error provided in Theorem \ref{thm:regret} while $z^*$ denotes $f(x^*;\theta^*)$.
The algorithm was terminated after $K=10000$ iterations.  Figure \ref{figure:Regret_norm}
illustrates the  scaled regret and scaled theoretical error of the learning scheme
when the number of steps increases {($\alpha=\beta=0.5$)}. Table \ref{table:optim_error_alpha}
shows the scaled theoretical error of the learning scheme
for different chosen $\gamma_{k,x}=k^{-\alpha}$ with $\alpha=0.5,0.6,0.7,0.8,0.9$ {when $\beta=0.5$}.
We see that when $\alpha$ changes, error bounds change marginally primarily because {the last term} in Theorem \ref{thm:regret} dominates the bound.

\end{itemize}

{
\begin{table}[htbp]
\caption{Learning $x^*$ and $\theta^*$ in a strongly convex (L) and convex (R) regime:
$\xi\sim U[-{\theta}^*/2,{\theta}^*/2]$}   \label{table:optim_error}
\tiny
\centering
\begin{tabular}{|c|c|c|c|c|c|c|c|c|c|c|c|}
  \hline
  N & W & $\frac{\mathbb{E}[\|x^K- x^*\|]}{1+\|x^*\|}$  & $\frac{\textrm{ERR}}{1+\|x^*\|}$  &  $\frac{\|\mathbb{E}[ \theta^K- \theta^*\|]}{1+\|\theta^*\|}$   & $\frac{\textrm{ERR}}{1+\|\theta^*\|}$   \\
 \hline
  10 &  	 2&	 7.3$\times 10^{-3}$  &	 9.2$\times 10^{9}$  & 4.8$\times 10^{-2}$ & 3.7$\times 10^{4}$   \\
  \hline
  10  &	  4 & 	  3.7$\times 10^{-2}$  &	 2.1$\times 10^{10}$  & 4.9$\times 10^{-2}$ & 3.1$\times 10^{4}$  \\
  \hline
   10 & 	 6 &    3.8$\times 10^{-2}$  &	 7.8$\times 10^{10}$  & 4.7$\times 10^{-2}$  & 8.3$\times 10^{4}$   \\
  \hline
  10 & 	 8 &    1.7$\times 10^{-2}$  &	 9.1$\times 10^{10}$  & 4.8$\times 10^{-2}$ & 8.5$\times 10^{4}$  \\
  \hline
  10 & 	  10 &   2.4$\times 10^{-2}$  &	 1.2$\times 10^{11}$  & 4.3$\times 10^{-2}$ & 8.6$\times 10^{4}$   \\
  \hline
\end{tabular}
\begin{tabular}{|c|c|c|c|c|c|c|c|c|c|c|c|}
  \hline
  N & W & $\frac{\mathbb{E}[ f(\tilde{x}_{1,K};\theta^{K}) - z^{*}  ]}{1+\|z^*\|}$  & $\frac{\textrm{ERR}}{1+\|x^*\|}$   & $\gamma_x$  \\
 \hline
  10 &  	 2&	 1.9$\times 10^{-1}$  &	 2.5$\times 10^{5}$   & 72 \\
  \hline
  10  &	  4 & 	  6.5$\times 10^{-2}$  &	 1.1$\times 10^{5}$ & 93 \\
  \hline
   10 & 	 6 &    2.7$\times 10^{-1}$  &	 2.6$\times 10^{5}$  & 127  \\
  \hline
  10 & 	 8 &    1.3$\times 10^{-1}$  &	 1.7$\times 10^{5}$  &  131\\
  \hline
  10 & 	  10 &   1.4$\times 10^{-1}$  &	 2.6$\times 10^{5}$  &  133  \\
  \hline
\end{tabular}
\end{table}
}
{
\begin{table}[htbp]
\caption{Investigation of regret when learning $x^*$ and $\theta^*$ in a stochastic convex regime: 
$\xi\sim U[-{\theta}^*/2,{\theta}^*/2]$, $N=5$, $W=5$}   \label{table:optim_error_alpha}
\tiny
\centering
\begin{tabular}{|c|c|c|c|c|c|c|c|c|c|c|c|}
  \hline
  $\alpha$ & $\frac{R_K}{K \|z^*\| } $  & $\frac{\textrm{ERR}}{\|z^*\| }$     \\
 \hline
  0.5 &	 4.8$\times 10^{-2}$  &	 3.1$\times 10^{8}$    \\
  \hline
  0.6 & 	  3.3$\times 10^{-2}$  &	 3.1$\times 10^{8}$  \\
  \hline
  0.7 &     2.3$\times 10^{-2}$  &	 3.1$\times 10^{8}$    \\
  \hline
  0.8 &    1.8$\times 10^{-2}$  &	 3.1$\times 10^{8}$ \\
  \hline
  0.9 &  1.5$\times 10^{-2}$  &	 3.1$\times 10^{3}$ \\
  \hline
\end{tabular}
\end{table}
}

\section{Concluding remarks}\label{sec:V}
Traditionally, much of the field of optimization has been defined by
problems in which the functions and sets are known to the
decision-maker. However, as problems grow in their
reliance on data, such knowledge cannot be taken for granted. We
consider one such instance of such problems where functions may be
misspecified and the associated vector may be learnt through the
parallel solution of a suitably defined problem. It is worth emphasizing
the problem in the {\em full} space of learning and optimization
variables is a challenging (non-monotone) stochastic variational
problem for which no first-order methods are currently available. Yet,
by leveraging the structure of the problem, we show that such
problems can indeed be efficiently solved.

We consider a problem of solving a stochastic optimization problem in
which the objective is parameterized by a vector that can be learnt by
solving a suitably defined learning problem, captured by a stochastic
optimization problem. In both strongly convex and merely convex regimes,
we develop a set of coupled stochastic approximation schemes which
produces a sequence of iterates that are shown to converge to the
solution and unknown parameter in an almost sure sense. Additionally, we
provide rate estimates for the prescribed schemes in both strongly
convex and convex regimes. Through an analysis of the rate of
convergence under a diminishing steplength setting, it is seen that the
optimal rate of convergence is observed in strongly
convex problems while in convex regimes, we see a degradation introduced
by learning from $\Oscr\left(\frac{1}{\sqrt{K}}\right)$ to
$\Oscr\left(\frac{\sqrt{\ln(K)}}{\sqrt{K}}\right)$. This degradation is seen to disappear if the averaging window is modified appropriately. Similar rate
statements are also provided in a constant steplength regime.  In fact,
		   we may also cast this problem as an online decision-making
		   problem where a decision-maker sees a collection of
		   misspecified functions. In a stochastic regime, we observe
		   that an upper bound on the average regret can be shown to
		   decay at a rate no worse than
		   {$\Oscr\left(\frac{\ln K}{\sqrt{K}}\right)$ for a suitably chosen steplength}.

Unfortunately, the optimization-based model cannot accommodate settings
where there is misspecification in the constraints or, more generally,
if the associated decision-making problem is an equilibrium problem.
Motivated by this gap, we consider a misspecified stochastic variational
inequality problem and propose analogous stochastic approximation
schemes for computation and learning. To resolve the challenge
associated with merely monotone maps, we employ (Tikhonov) regularized
counterparts for which almost-sure convergence statements can be
provided. Additionally, we provide rate statements for constant and
diminishing steplength regimes, of which the latter requires imposing a
suitable weak-sharpness assumption on the original problem. Again, it is
seen that while the schemes display the  optimal rate of convergence
under strongly monotone regimes, a degradation in the rate is seen in
the monotone regime.

\bibliographystyle{siam}
\bibliography{ref}

\appendix

\section{Theorem 1 in \cite{zinkevich03}} \label{appendix:zinkevich}

\begin{defi}
Given an algorithm $A$, a convex set $F\subseteq\mathbb{R}^n$ and an
infinite sequence $\{c^1,c^2,\ldots\}$ where each $c^t:F\rightarrow\mathbb{R}$ is
a convex function, if $\{x^1,x^2,\ldots\}$ are the vectors selected by $A$, then the cost of $A$ until
time $T$ is defined as
  $C_A(T) = \sum_{t=1}^T c^t(x^t)$.
The cost of a static feasible solution $x\in F$ until time
$T$ is defined as $C_x(T) = \sum_{t=1}^T c^t(x).$
The regret of algorithm $A$ until time $T$ is defined as
  $R_A(T) = C_A(T) - \min_{x\in F} C_x(T).$
\end{defi}

The Greedy Projection algorithm proposed in \cite{zinkevich03} is as follows.\vspace{0.2in}

\begin{alg}[Greedy Projection] \label{alg:greedy_projection}
Select an arbitrary
$x^1\in F$ and a sequence of learning rates
$\eta_1,\eta_2,\ldots\in\mathbb{R}^+$. In time step $t$, after receiving a cost
function, select the next vector $x^{t+1}$ according to:
\begin{align*}
  x^{t+1}=\Pi_F(x^t-\eta_t\nabla c^t(x^t)).
\end{align*}
\end{alg}
\vspace{0.2in}
Then, we have the following result.\vspace{0.2in}

\begin{thm}[Theorem 1 in \cite{zinkevich03}] \label{thm:zinkevich_regret}
If $\eta_t=t^{-1/2}$, the regret of the Greedy
Projection algorithm is:
\begin{align*}
  R_G(T)\leq \frac{\|F\|^2\sqrt{T}}{2} + \left(\sqrt{T}-\frac{1}{2}\right)\|\nabla c\|^2,
\end{align*}
where $\|F\|\triangleq \max_{x,y\in F} d(x,y)$ and $\|\nabla c\|\triangleq \max_{x\in F, t\in \{1,2,\ldots\}} \|\nabla c^t(x)\|$.
\end{thm}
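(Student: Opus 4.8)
The plan is to run the standard online gradient descent potential argument, using the squared distance to a fixed comparator as the potential and exploiting that the projection $\Pi_F$ is nonexpansive. First I would fix an arbitrary $x^*\in F$; at the end it will be taken to be the offline minimizer $\argmin_{x\in F}\sum_{t=1}^T c^t(x)$, so that the left-hand side becomes exactly $R_G(T)$. Tracking $\|x^t-x^*\|^2$ and using $x^*=\Pi_F(x^*)$ together with the nonexpansivity of $\Pi_F$, the update $x^{t+1}=\Pi_F(x^t-\eta_t\nabla c^t(x^t))$ of Algorithm \ref{alg:greedy_projection} gives, after expanding the square,
\begin{align*}
\|x^{t+1}-x^*\|^2 \leq \|x^t-x^*\|^2 - 2\eta_t \nabla c^t(x^t)^T(x^t-x^*) + \eta_t^2\|\nabla c^t(x^t)\|^2.
\end{align*}

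Next I would invoke convexity of $c^t$ in the form of the gradient inequality $\nabla c^t(x^t)^T(x^t-x^*)\geq c^t(x^t)-c^t(x^*)$, which turns the inner-product term into the per-step regret and, after rearranging, yields
\begin{align*}
c^t(x^t)-c^t(x^*) \leq \frac{\|x^t-x^*\|^2-\|x^{t+1}-x^*\|^2}{2\eta_t} + \frac{\eta_t}{2}\|\nabla c^t(x^t)\|^2.
\end{align*}
Summing over $t=1,\ldots,T$ splits the regret into a ``telescoping'' piece and a ``gradient'' piece, which I would bound separately.

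The main obstacle, and essentially the only step requiring care, is the telescoping piece $\sum_{t=1}^T \frac{\|x^t-x^*\|^2-\|x^{t+1}-x^*\|^2}{2\eta_t}$, since the step sizes vary with $t$ and so the sum does not collapse naively. I would handle it by summation by parts, regrouping so that each $\|x^t-x^*\|^2$ is multiplied by $\frac{1}{2\eta_t}-\frac{1}{2\eta_{t-1}}$, with boundary contributions at $t=1$ and $t=T+1$. Because $\eta_t=t^{-1/2}$ is nonincreasing, these coefficients are nonnegative, so the uniform diameter bound $\|x^t-x^*\|^2\leq\|F\|^2$ can be applied with the correct sign; the regrouped sum then telescopes to $\frac{\|F\|^2}{2\eta_T}=\frac{\|F\|^2\sqrt{T}}{2}$.

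For the gradient piece I would bound each $\|\nabla c^t(x^t)\|\leq\|\nabla c\|$ and use the integral comparison $\sum_{t=1}^T t^{-1/2}\leq 1+\int_1^T x^{-1/2}\,dx = 2\sqrt{T}-1$, giving $\frac{\|\nabla c\|^2}{2}(2\sqrt{T}-1)=\left(\sqrt{T}-\frac12\right)\|\nabla c\|^2$. Adding the two pieces produces the claimed bound; since $x^*$ was arbitrary in $F$, it holds in particular for the offline minimizer, which is precisely $R_G(T)$ in Theorem \ref{thm:zinkevich_regret}.
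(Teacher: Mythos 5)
Your proposal is correct and follows exactly the argument the paper sketches: it establishes the intermediate bound $R_G(T)\leq \frac{\|F\|^2}{2\eta_T}+\frac{\|\nabla c\|^2}{2}\sum_{t=1}^T\eta_t$ (via nonexpansivity of $\Pi_F$, the gradient inequality, and summation by parts on the varying step sizes) and then specializes to $\eta_t=t^{-1/2}$ with the integral comparison $\sum_{t=1}^T t^{-1/2}\leq 2\sqrt{T}-1$. The only difference is that you supply the details that the paper's proof sketch (taken from Zinkevich) omits; all constants and signs check out.
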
\vspace{0.2in}

\textit{Proof sketch:} The regret of the Greedy
Projection algorithm can be bounded as follows:
\begin{align*}
  R_G(T)\leq \frac{\|F\|^2}{2\eta_T} + \frac{\|\nabla c\|^2}{2} \sum_{t=1}^T \eta_t.
\end{align*}
The result can be immediately obtained when $\eta_t=t^{-1/2}$.\qed

\end{document}